\DeclareSymbolFontAlphabet{\mathbb}{AMSb}
\DeclareSymbolFontAlphabet{\mathbbl}{bbold}
\newcommand{\Prism}{{\mathlarger{\mathbbl{\Delta}}}}
\newtheorem{thm}{Theorem}[section]
\newtheorem{thm2}{Theorem}
\newtheorem{prop}[thm]{Proposition}
\newtheorem{cor}[thm]{Corollary}
\newtheorem{lem}[thm]{Lemma}
\theoremstyle{definition}
\newtheorem{defi}[thm]{Definition}
\newtheorem{rmk}[thm]{Remark}
\newtheorem{exa}[thm]{Example}
\newtheorem{const}[thm]{Construction}
\numberwithin{equation}{section}
\newcommand{\bb}[1]{\mathbb{#1}}
\newcommand{\cl}[1]{{\mathcal{#1}}}
\newcommand{\scr}[1]{{\mathscr{#1}}}
\newcommand{\msf}[1]{{\mathsf{#1}}}
\newcommand{\mfr}[1]{{\mathfrak{#1}}}
\newcommand{\mrm}[1]{{\mathrm{#1}}}
\newcommand{\mbf}[1]{\mathbf{#1}}
\newcommand{\ov}[1]{{\overline{#1}}}
\newcommand{\wtd}[1]{{\widetilde{#1}}}
\newcommand{\Frob}{{\operatorname{Frob}}}
\newcommand{\Frac}{{\operatorname{Frac}}}
\newcommand{\Img}{{\operatorname{Im}}}
\newcommand{\clos}{{\operatorname{cl}}}
\newcommand{\ints}{{\operatorname{int}}}
\newcommand{\bdd}{{\operatorname{bdd}}}
\newcommand{\op}{{\operatorname{op}}}
\newcommand{\BK}{{\operatorname{BK}}}
\newcommand{\sch}{{\operatorname{sch}}}
\newcommand{\spc}{{\operatorname{sp}}}
\newcommand{\SP}{{\operatorname{SP}}}
\newcommand{\wgc}{{\operatorname{wgc}}}
\newcommand{\Gr}{{\operatorname{Gr}}}
\newcommand{\Spd}{{\operatorname{Spd}}}
\newcommand{\Spec}{{\operatorname{Spec}}}
\newcommand{\Spa}{{\operatorname{Spa}}}
\newcommand{\Spf}{{\operatorname{Spf}}}
\newcommand{\Perf}{{\operatorname{Perf}}}
\newcommand{\SchPerf}{{\operatorname{SchPerf}}}
\newcommand{\Sh}{{\operatorname{Sh}}}
\newcommand{\supp}{{\operatorname{supp}}}
\newcommand{\univ}{{\operatorname{univ}}}
\newcommand{\loc}{{\operatorname{loc}}}
\newcommand{\perf}{{\operatorname{perf}}}
\newcommand{\Ker}{{\operatorname{Ker}}}
\newcommand{\torsion}{{\operatorname{torsion}}}
\newcommand{\id}{{\operatorname{id}}}
\newcommand{\an}{{\operatorname{an}}}
\newcommand{\colim}{\mathop{\operatorname{colim}}}
\newcommand{\ad}{{\operatorname{ad}}}
\newcommand{\red}{{\operatorname{red}}}
\newcommand{\Hom}{{\operatorname{Hom}}}
\newcommand{\Gal}{{\operatorname{Gal}}}
\newcommand{\Sht}{{\operatorname{Sht}}}
\newcommand{\Grp}{{\operatorname{Grp}}}
\newcommand{\Set}{{\operatorname{Set}}}
\newcommand{\Flag}{{\mathscr{F}\!\ell}}
\title{Relative representability and parahoric level structures}
\author{Yuta Takaya}
\address{Graduate School of Mathematical Sciences, The University of Tokyo, 3-8-1 Komaba,
Meguro-ku, Tokyo 153-8914, Japan}
\email{takaya@ms.u-tokyo.ac.jp}
\begin{document}
\begin{abstract}
    We establish a representability criterion of $v$-sheaf theoretic modifications of formal schemes and apply this criterion to moduli spaces of parahoric level structures on local shtukas. In the proof, we introduce nice classes of equivariant profinite perfectoid covers and study geometric quotients of perfectoid formal schemes by profinite groups. As a corollary, we show the local representability of integral models of local Shimura varieties under hyperspecial levels, and study the forgetful morphisms between integral models of Shimura varieties associated with inclusions of parahoric subgroups under hyperspecial levels. 
\end{abstract}

\maketitle
\setcounter{tocdepth}{2}
\tableofcontents

\section{Introduction}

In \cite{Art69} and \cite{Art74}, Artin established algebraicity criteria of functors and stacks over schemes and these criteria play a central role in moduli problems over schemes. In recent years, $p$-adic Hodge theory has developed significantly, largely due to the theory of perfectoid spaces and prismatic cohomology. It turns out that many arithmetically important spaces admit moduli interpretation in perfect schemes or perfectoid spaces, that is, as $v$-sheaves (e.g. \cite{Zhu17}, \cite{SW20}). Building on this development, new $v$-sheaf theoretic moduli problems for local models and integral models of local Shimura varieties were formulated in \cite{SW20} and the one for local models was completely solved in \cite{AGLR22}. For these moduli problems, it is important to know whether $v$-sheaf theoretic moduli problems are representable in formal schemes. In this paper, we will tackle this problem and establish a representability criterion in the style of \cite{Art70}.

In \cite{Art70}, Artin proved that every formal modification of the completion of a locally Noetherian algebraic space is uniquely algebraizable to its algebraic modification. Our first observation is that the conditions of formal modifications can be roughly translated to the $v$-sheaf theory using the language of \cite{Gle24}, where \textit{(pre)kimberlites} are introduced as a $v$-sheaf theoretic generalization of formal schemes. 

Let $R$ be a complete adic ring in which $p$ is topologically nilpotent. As a $v$-sheaf theoretic modification of $\Spf(R)$, we consider a thick prekimberlite $Y$ formally adic over $\Spd(R)$ with $Y^\an \cong \Spd(R)^\an$. Here, a prekimberlite formally adic over $\Spd(R)$ is a $v$-sheaf theoretic counterpart of formal schemes adic over $R$, and the condition $Y^\an \cong \Spd(R)^\an$ roughly amounts to $Y$ being a formal modification of $\Spf(R)$ (see \Cref{prop:formalmod}). Here, we say that $Y$ is thick if the specialization map $\spc_{Y^\an}$ is surjective. Since we do not have control over deperfections of $Y^\red-\Img(\spc_{Y^\an})$, this assumption on $Y$ is essential. 

We prove the representability of $Y$ by additionally assuming the existence of a nice equivariant profinite perfectoid cover $(R_\bullet,\Gamma_\bullet)$ of $R$ and the representability of $Y$ over the perfectoid colimit $R_\infty$. In \Cref{ssec:verygoodcovers} and \Cref{ssec:goodcov}, we introduce the axioms of $(R_\bullet, \Gamma_\bullet)$ and simply call $(R_\bullet, \Gamma_\bullet)$ a (very) good cover of $R$. The construction of good covers is inspired by \cite[Remark 3.11]{BS22}, but the notable difference is the existence of a profinite group action of $\Gamma_\infty$ on $R_\infty$, similarly to the Sen theory. Identifying the complicated axioms of good covers is one essential part of the proof of our representability criterion. The main source of formal schemes admitting good covers is the completion of smooth schemes (see \Cref{rmk:exverygood}). In \Cref{ssc:padicexa}, we discuss how wide the class of $p$-adic formal schemes admitting good covers is. The difficulty in obtaining examples other than $p$-adic smooth formal schemes lies in the condition (unifsec) in \Cref{ssec:goodcov}. This question is strongly related to how wide the class of sousperfectoid affinoids is (see \cite{HK22}) .  

In \Cref{sec:perfdformal}, we extend the definition of $p$-adic perfectoid formal schemes introduced in \cite{RC21} and study the basic properties of perfectoid formal schemes. The affine building block of perfectoid formal schemes is a perfectoid ring with a complete linear topology of a finitely generated ideal containing $p$. This notion is equivalent to perfectoid rings in the sense of Gabber-Romero (see \cite[Definition 16.3.1]{GR18}), but we call it a complete adic perfectoid ring to clarify the terminology. By construction, $R_\infty$ is a complete adic perfectoid ring, and the representability of $Y\times_{\Spd(R)} \Spd(R_\infty)$ is way easier to verify than that of $Y$. 

Now, we state a form of our representability criterion. 

\begin{thm2} \textup{(\Cref{cor:vshfmodif})}
    Let $R$ be a reduced excellent complete adic ring admitting a good cover $(R_\bullet,\Gamma_\bullet)$. Let $Y$ be a thick prekimberlite formally adic over $\Spd(R)$ with $Y^\an \cong \Spd(R)^\an$. Suppose that $Y^\red$ is perfectly of finite type over $R_\red$ and $Y\times_{\Spd(R)} \Spd(R_\infty)$ is representable by a perfectoid formal scheme adic over $R_\infty$. Then, $Y$ is representable by a unique proper formal $R$-scheme $\mfr{Y}$ admitting a maximal good cover with $\mfr{Y}^\an \cong \Spa(R)^\an$. 
\end{thm2}

The notion of maximal good covers is introduced in \Cref{ssec:maxlgoodcov}. The existence of maximal good covers ensures the uniqueness of $\mfr{Y}$ (see \Cref{lem:uniqmapmaxlgood}) and $\mfr{Y}$ is automatically reduced and distinguished (in the sense of \cite{FK18}). The proof is essentially reduced to the affine case proved in \Cref{thm:affrep}. The construction of (affine open formal subschemes of) $\mfr{Y}$ is easy to describe as in \Cref{cor:expaffrep}, and the essential difficulties lie in verifying the desired properties of $\mfr{Y}$. In \Cref{thm:topfin}, we prove a finiteness criterion needed to prove the finiteness of $\mfr{Y}$. In \Cref{ssc:geomquot}, we study geometric quotients of $v$-sheaves by (pro)finite groups, and we use the theory to show that $\mfr{Y}$ represents $Y$. 

Another form of our representability criterion is \Cref{prop:finetrep}, which deals with the case where $R$ is $p$-adic and $Y^\an$ is finite \'{e}tale over $\Spd(R)^\an$. As an arithmetic application, we expect that this criterion enables us to study moduli spaces of level structures on local shtukas. In this paper, we only deal with parahoric level structures on local shtukas. In our forthcoming work, we will study depth-zero integral models of local Shimura varieties by studying depth-zero level structures using \Cref{prop:finetrep}. 

Let $F$ be a non-archimedean local field over $\bb{Q}_p$ and let $O_F$ be the ring of integers of $F$. Let $G$ be a connected reductive group over $F$ and let $\cl{G}'\to \cl{G}$ be a morphism of parahoric group schemes of $G$ over $O_F$. As introduced in \cite[Section 23.1]{SW20}, consider the $v$-stack of local $\cl{G}$-shtukas
\[
    \Sht_{\cl{G}} \to \Spd(O_F). 
\]
For a $v$-sheaf $X$ over $\Spd(O_F)$, we say that a map $X\to \Sht_{\cl{G}}$ over $\Spd(O_F)$ is a local shtuka over $X_{/O_F}$. We define $\Sht_{\cl{G}'}$ similarly. As an application of \Cref{prop:finetrep}, we obtain the following representability of moduli spaces of parahoric level structures. 

\begin{thm2} \textup{(\Cref{thm:replevel})}
    Let $R$ be a reduced excellent $p$-adically complete $O_F$-algebra admitting a good cover and let $\cl{P}$ be a local $\cl{G}$-shtuka over $\Spd(R)_{/O_F}$. The $v$-closure of the generic fiber in $\Sht_{\cl{G}'}\times_{\Sht_{\cl{G}},\cl{P}} \Spd(R)$ is representable by a unique proper $p$-adic formal $R$-scheme $\mfr{Y}$ admitting a maximal good cover with $\mfr{Y}_\eta$ finite \'{e}tale over $\Spf(R)_\eta$. 
\end{thm2}

Thanks to the characterization by Scholze-Weinstein and Pappas-Rapoport, it has direct applications to integral models of local and global Shimura varieties. For local Shimura varieties, we can show the local representability of integral models of local Shimura varieties under hyperspecial levels (see \Cref{thm:localrep}) by using the result at hyperspecial levels (see \cite{Bar22}, \cite{Ito25a}). As a byproduct, we show the representability of $v$-sheaf theoretic local model diagrams, modified from \cite[Section 4.9.3]{PR24}, under hyperspecial levels, but this approach does not enable us to prove the basic properties of local model diagrams (see \Cref{prop:LMDrep}). For global Shimura varieties, we can construct integral models of Shimura varieties under hyperspecial levels as algebraic spaces from those at hyperspecial levels (see \Cref{thm:canint}). We verify that they satisfy the axiom of canonicality in \cite{PR24} and are equal to canonical integral models if they exist. 

\addtocontents{toc}{\protect\setcounter{tocdepth}{0}}

\subsection{The structure of the paper}
In \Cref{sec:adicsp}, we explain the basic properties of (reduced excellent) adic spaces and prove a finiteness criterion needed in the proof of \Cref{thm:affrep}. In \Cref{sec:review-v}, we review the $v$-sheaf theory and study geometric quotients of $v$-sheaves. In \Cref{sec:perfdformal}, we introduce perfectoid formal schemes and show the basic properties. In \Cref{sec:vdil}, we introduce (very) good covers and establish our main representability criterion. In \Cref{sec:locsht}, we study moduli spaces of parahoric level structures on local shtukas as an application of our representability criterion. 

\subsection*{Acknowledgements}
I would like to thank my advisor Yoichi Mieda for his constant support and encouragement. I would also like to thank Kazuhiro Ito and Alex Youcis for helpful discussions, and to the anonymous referee for a careful reading and detailed comments. This work was supported by the WINGS-FMSP program at the Graduate School of Mathematical Sciences, the University of Tokyo and JSPS KAKENHI Grant number JP24KJ0865. 

\subsection*{Notation}
Fix a prime number $p$. All rings are assumed to be commutative. The ring of invariants of a ring $R$ with respect to a $\Gamma$-action is denoted by $R^\Gamma$. The reduction of a ring $R$ is denoted by $R_\red$. The perfection of a scheme $X$ is denoted by $X^\mrm{perf}$. For a scheme $X$ over a ring $A$, its base change to an $A$-algebra $B$ is denoted by $X_B$ or $X\otimes_A B$. 

Following \cite[Section 2.2]{SW13}, the pre-adic space associated with a formal scheme $\mfr{X}$ is denoted by $\mfr{X}^\ad$. The analytic locus of $\mfr{X}^\ad$ is denoted by $\mfr{X}^\an$. The tilting of a perfectoid ring $R$ is denoted by $R^\flat$, and the tilting of a perfectoid Huber pair $(R,R^+)$ is denoted by $(R^\flat,R^{\flat +})$. 

\addtocontents{toc}{\protect\setcounter{tocdepth}{2}}

\section{Preliminaries on adic spaces} \label{sec:adicsp}

\subsection{Generalities}
In this section, we introduce our notation and explain the basic properties of adic spaces. 

An adic ring $R$ is a topological ring endowed with a linear topology of a finitely generated ideal $I\subset R$, where $\{I^n\}_{n \geq 0}$ is a neighborhood basis of $0 \in R$. When $R$ is complete, we say that $R$ is a complete adic ring. If we like to emphasize the topology on $R$, we say that $R$ is a (complete) $I$-adic ring. The set of topologically nilpotent elements in $R$ is denoted by $R^{\circ \circ}$ and $R/R^{\circ \circ}$ is denoted by $R_\red$. 

For an Huber pair $(A,A^+)$, which is usually called a Huber pair, we have a topological space $\Spa(A,A^+)$ and a structural presheaf $\cl{O}_{\Spa(A,A^+)}$ with a subpresheaf $\cl{O}^+_{\Spa(A,A^+)}\subset \cl{O}_{\Spa(A,A^+)}$ introduced in \cite{Hub93} and \cite{Hub94}. When $\cl{O}_{\Spa(A,A^+)}$ is a sheaf, we say that $\Spa(A,A^+)$ is an adic space. For every open subset $U\subset \Spa(A,A^+)$, $\cl{O}_{\Spa(A,A^+)}(U)$ (resp.\ $\cl{O}^+_{\Spa(A,A^+)}$) is simply denoted by $\cl{O}(U)$ (resp.\ $\cl{O}^+(U)$). The set of topologically nilpotent elements in $\cl{O}(U)$ is denoted by $\cl{O}^{\circ \circ}(U)$.  

For every set of elements $s,T_1,\ldots,T_r \in A$ such that $T_1,\ldots, T_r$ generate an open ideal of $A$, we have a rational domain
\[  
    U(\tfrac{T_1,\ldots,T_r}{s}) = \{ \lvert T_i \rvert \leq \lvert s \rvert \neq 0 \: \vert \: 1 \leq i \leq r \} \subset \Spa(A, A^+). 
\]
The set of rational domains forms an open basis of $\Spa(A,A^+)$. There is a Huber pair $(A\langle \tfrac{T_1,\ldots,T_r}{s} \rangle,A\langle \tfrac{T_1,\ldots,T_r}{s} \rangle^+)$ that is initial among Huber pairs $(B, B^+)$ over $(A, A^+)$ such that $\Spa(B, B^+) \to \Spa(A, A^+)$ factors through $U(\tfrac{T_1,\ldots,T_r}{s})$. Note that $\Spa(A\langle\tfrac{T_1,\ldots,T_r}{s} \rangle, A\langle \tfrac{T_1,\ldots,T_r}{s} \rangle^+)$ is homeomorphic to $U(\tfrac{T_1,\ldots,T_r}{s})$. When $s$ is topologically nilpotent, $(A\langle \tfrac{T_1,\ldots,T_r}{s} \rangle,A\langle \tfrac{T_1,\ldots,T_r}{s} \rangle^+)$ is Tate. In this paper, a rational domain $U(\tfrac{T_1,\ldots,T_r}{s})$ is called Tate if $s$ is topologically nilpotent. 

When $(A,A^+)=(R,R)$ for a complete adic ring $R$, $\Spa(R,R)$ is simply denoted by $\Spa(R)$. The set of analytic points in $\Spa(R)$ is a quasicompact open subset of $\Spa(R)$ and denoted by $\Spa(R)^\an$. The analytic locus $\Spa(R)^\an$ is empty if and only if $R$ is discrete (see e.g. \cite[Proposition 7.49 (2)]{Wed12}). 

\begin{lem} \label{lem:ancov}
    Let $R$ be a complete adic ring and let $f_1,\ldots,f_r\in R$ be topologically nilpotent elements. If for every $\vert \cdot \vert \in \Spa(R)^\an$, we have $\lvert f_i \rvert \neq 0$ for some $1\leq i \leq r$, then $f_1,\ldots,f_r$ generate an ideal of definition of $R$. 
\end{lem}
\begin{proof}
    Let $I=(f_1,\ldots,f_r) \subset R$ and let $\overline{I}$ be the closure of $I$. Then, $R/\overline{I}$ is a complete adic ring. By assumption, $\Spa(R/\overline{I})^\an$ is empty, so $R/\overline{I}$ is discrete and $\overline{I}$ is open. Let $J \subset R$ be a finitely generated ideal of definition such that $J \subset \ov{I}$. Then, $I+J \subset \ov{I}$, and as $I+J$ is an open submodule of $R$, we also have $\ov{I} \subset I+J$. In particular, $\ov{I} = I +J$. By the same argument, we have $\ov{I} = I + J^2$. Since $1 + J \subset R^\times$, by applying Nakayama's lemma to $(I + J) / I \subset R / I$, we get $I + J = I$. In particular, $I$ is open. 
\end{proof}

\begin{lem}
    Let $R$ be a complete adic ring. Every Tate rational domain $R(\tfrac{T_{1}, \ldots,T_{r}}{s}) \subset \Spa(R)^\an$ can be taken so that $s,T_{1},\ldots,T_{r} \in R$ generate an ideal of definition. 
\end{lem}
\begin{proof}
    Since $(T_1,\ldots,T_r) \subset R$ is open, we can take a finitely generated ideal of definition $I \subset (T_1,\ldots,T_r)^2$ of $R$. It is easy to see that $R(\tfrac{T_{1}, \ldots,T_{r}}{s}) = R(\tfrac{f_1,\ldots,f_s,sT_{1}, \ldots,sT_{r}}{s^2})$ for any finite set of generators $f_1,\ldots,f_s \in I$. Since $s$ is topologically nilpotent, the ideals
    \[
        I \subset (f_1,\ldots, f_s, sT_1, \ldots, sT_r) \subset (I, s)
    \]
    are all ideals of definition. Thus, the description as $ R(\tfrac{f_1,\ldots,f_s,sT_{1}, \ldots,sT_{r}}{s^2})$ satisfies the claim. 
\end{proof}

\begin{lem} \label{lem:Tatecov}
    Let $R$ be a complete adic ring and let $U\subset \Spa(R)^\an$ be a quasicompact open subset. There is a set of generators $f_1,\ldots,f_n \in R$ of an ideal of definition of $R$ such that $U=\bigcup_{1\leq i \leq r} R(\tfrac{f_1,\ldots,f_n}{f_i})$ for some $1\leq r \leq n$. 
\end{lem}
\begin{proof}
    This can be proved in the same way as \cite[\S 8.4, Lemma 5]{Bos14}. Since $U$ is quasicompact, we can take a finite covering of $U$ by Tate rational domains $R(\tfrac{T_{i,1}, \ldots,T_{i,r_i}}{s_i})$ indexed by $i\in I$. We may assume that $s_i,T_{i,1},\ldots,T_{i,r_i} \in R$ generate an ideal of definition for all $i\in I$. Then, for every $i\in I$, $\Spa(R)^\an$ is covered by $R(\tfrac{s_i,T_{i,1}, \ldots,T_{i,r_i}}{a_i})$ with $a_i \in \{s_i,T_{i,1},\ldots,T_{i,r_i}\}$. 

    Consider the intersection of these coverings of $\Spa(R)^\an$. Let $F \subset R$ be the collection of elements of $R$ of the form $\prod_{i\in I} a_i$ with $a_i \in \{s_i,T_{i,1},\ldots,T_{i,r_i}\}$. Let $F_0\subset F$ be the subset consisting of elements $\prod_{i\in I} a_i$ with $a_i=s_i$ for some $i\in I$. Then, $\Spa(R)^\an$ is covered by $R(\tfrac{F}{f})$ with $f\in F$ and $U$ is covered by $R(\tfrac{F}{f})$ with $f\in F_0$. Thus, we obtain the claim.  
\end{proof}

\subsection{Adic spectra of reduced excellent complete adic rings}
In this section, we explain the basic properties of adic spectra of reduced excellent complete adic rings. 

Let $R$ be a Noetherian complete adic ring. By \cite{Hub94}, $\Spa(R)$ is an adic space. For every Tate rational domain $U(\tfrac{T_1,\ldots,T_r}{s}) \subset \Spa(R)$, the rational localization $R\langle \tfrac{T_1,\ldots,T_r}{s} \rangle$ admits a Noetherian ring of definition, such as the $s$-adic completion of $R[\tfrac{T_1,\ldots,T_r}{s}]$. In particular, $R\langle \tfrac{T_1,\ldots,T_r}{s} \rangle$ is a strongly Noetherian Tate ring. If we assume in addition that $R$ is reduced and excellent, we can prove more properties of $\Spa(R)^\an$. 

\begin{lem} \label{lem:excellentst}
    Let $R$ be a reduced excellent complete adic ring. For every Tate rational domain $U(\tfrac{T_1,\ldots,T_r}{s}) \subset \Spa(R)^\an$, $R\langle \tfrac{T_1,\ldots,T_r}{s} \rangle$ is reduced and uniform. In particular, $\Spa(R)^\an$ is stably uniform. 
\end{lem}
\begin{proof}
    Since $R$ is excellent, $R[\tfrac{T_1,\ldots,T_r}{s}]$ is excellent and its integral closure $R(\tfrac{T_1,\ldots,T_r}{s})^+$ in the reduced ring $R[\tfrac{1}{s}]$ is finite over $R[\tfrac{T_1,\ldots,T_r}{s}]$ (see \cite[Theorem 78]{Mat80}, \cite[Tag 03GH]{stacks-project}). In particular, $R\langle \tfrac{T_1,\ldots,T_r}{s}\rangle^+$ is equal to the $s$-adic completion of $R(\tfrac{T_1,\ldots,T_r}{s})^+$, so it is reduced and bounded (see \cite[Theorem 79]{Mat80}). 
\end{proof}

\begin{prop} \label{prop:subspexcellent}
    Let $R$ be a reduced excellent complete adic ring with an ideal of definition $I\subset R$. The topological ring $\Gamma(\Spa(R)^\an, \cl{O}^+)$ is $I$-adic and finite over $R$. 
\end{prop}
\begin{proof}
    Let $f_1,\ldots, f_n \in R$ be generators of $I$. Let $Y\to \Spec(R)$ be the blowup along $V(I)$. It is an isomorphism outside $V(I)$ and the natural section $\Spec(R)-V(I) \to Y$ is affine. Let $Y^+\to Y$ be the normalization of $Y$ in $\Spec(R)-V(I)$. Since $R$ is reduced and excellent, $Y^+\to Y$ is finite. For each $1\leq i \leq n$, let $V_i\subset Y$ be the open subset where $f_i$ divides $f_j$ for every $1\leq j \leq n$ and let $V_i^+$ be the inverse image of $V_i$ in $Y^+$. Now, $V_i$ and $V_i^+$ is affine and the proof of \Cref{lem:excellentst} shows that the $I$-adic completion of $V_i^+$ is isomorphic to $\Spf(R\langle \tfrac{f_1,\ldots,f_n}{f_i} \rangle^+)$. In particular, $\Gamma(\Spa(R)^\an, \cl{O}^+) \cong \lim_{k \geq 0} \Gamma(Y^+\otimes_R (R/I^k), \cl{O})$ as topological rings. Since $Y^+$ is proper over $R$, it follows from the formal function theorem (see e.g. \cite[Corollaire 4.1.7]{EGA3-1}) that $\Gamma(\Spa(R)^\an, \cl{O}^+)$ is $I$-adic and finite over $R$. 
\end{proof}

\begin{lem} \label{lem:Oooinimg}
    Let $R$ be a reduced excellent complete adic ring with an ideal of definition $I\subset R$. Let $f\in R$ be an arbitrary element and let $R'=R[\tfrac{1}{f}]^\wedge$. If $R\to \Gamma(\Spa(R)^\an, \cl{O}^+)$ is a homeomorphism onto the image and the image contains $\Gamma(\Spa(R)^\an, \cl{O}^{\circ \circ})$, then the same properties hold for $R'$. 
\end{lem}
\begin{proof}
    We keep the notation in the proof of \Cref{prop:subspexcellent}. We have $\Gamma(\Spa(R)^\an, \cl{O}^+) \cong \Gamma(Y^+, \cl{O})$. Let $Y^{+,\red}$ be the reduction of $Y^+\otimes_R (R/I)$. Then, $\Gamma(\Spa(R)^\an, \cl{O}^{\circ \circ})$ is isomorphic to $\Ker(\Gamma(Y^+, \cl{O}) \to \Gamma(Y^{+,\red}, \cl{O}))$. Since Zariski localization preserves normalization, it follows from the same argument that $\Gamma(\Spa(R')^\an, \cl{O}^+) \cong \lim_{k \geq 0} \Gamma(Y^+\otimes_R (R'/I^k), \cl{O})$ as topological rings. By the formal function theorem, we have $\Gamma(\Spa(R')^\an, \cl{O}^+) \cong \Gamma(Y^+, \cl{O})[\tfrac{1}{f}]^\wedge$ where $(-)^\wedge$ denotes the $I$-adic completion. Since $R[\tfrac{1}{f}]$ is Noetherian and $\Gamma(Y^+,\cl{O})$ is finite over $R$, $\Gamma(\Spa(R')^\an, \cl{O}^+) \cong \Gamma(\Spa(R)^\an, \cl{O}^+) \otimes_R R'$. Since $R\to R'$ is flat, we see that $R' \to \Gamma(\Spa(R')^\an, \cl{O}^+)$ is injective. 
    
    It is enough to show that the image contains $\Gamma(\Spa(R')^\an, \cl{O}^{\circ \circ})$. Since $R'=R[\tfrac{1}{f}]^\wedge$, $Y^{+,\red}\otimes_R R'$ is reduced. Thus, we see that $\Gamma(\Spa(R')^\an, \cl{O}^{\circ \circ}) \cong \Ker(\Gamma(Y^+, \cl{O})\otimes_R R' \to \Gamma(Y^{+,\red}\otimes_R R', \cl{O}))\cong \Gamma(\Spa(R)^\an, \cl{O}^{\circ \circ}) \otimes_R R'$ as $R'$ is flat over $R$. Thus, the claim follows since the image of $R \to \Gamma(\Spa(R)^\an, \cl{O}^+)$ contains $\Gamma(\Spa(R)^\an, \cl{O}^{\circ \circ})$. 
\end{proof}

Next, we explain the normality of reduced excellent analytic adic spaces. 

\begin{defi}
    We say that a Tate Huber pair $(A,A^+)$ is reduced (resp.\ normal) and excellent if $A^+$ is reduced (resp.\ normal), excellent and bounded. 
\end{defi}

Note that $A^+$ is also normal if $(A,A^+)$ is normal and excellent. 

\begin{lem}
    Let $(A,A^+)$ be a reduced excellent Tate Huber pair. Any rational localization $(B,B^+)$ of $(A,A^+)$ is reduced and excellent. 
\end{lem}
\begin{proof}
    We may write $(B,B^+) = (A\langle \tfrac{T_1,\ldots,T_r}{s} \rangle, A\langle \tfrac{T_1,\ldots,T_r}{s} \rangle^+)$ for $s,T_1,\ldots,T_r \in A^{\circ \circ}$. As in \Cref{lem:excellentst}, let $A(\tfrac{T_1,\ldots,T_r}{s})^+$ be the integral closure of $A^+[\tfrac{T_1,\ldots,T_r}{s}]$ in $A[\tfrac{1}{s}]$. Since $A^+$ is excellent and $A[\tfrac{1}{s}]$ is reduced and of finite type over $A^+$, $A(\tfrac{T_1,\ldots,T_r}{s})^+$ is finite over $A^+[\tfrac{T_1,\ldots,T_r}{s}]$. As in \Cref{lem:excellentst}, we see that $B^+$ is reduced, excellent and bounded. 
\end{proof}

\begin{lem} \label{lem:normalanloc}
    Let $(A,A^+)$ be a reduced excellent Tate Huber pair. Let $\{(B_i,B_i^+)\}_{i\in I}$ be a finite set of rational localizations of $(A,A^+)$ such that $\Spa(A,A^+) = \bigcup_{i\in I} \Spa(B_i,B_i^+)$. Then, $(A,A^+)$ is normal if and only if $(B_i,B_i^+)$ is normal for every $i\in I$. 
\end{lem}
\begin{proof}
    By \cite[Lemma 1.7.6]{Hub96}, $A \to \prod_{i\in I} B_i$ is faithfully flat, so $A$ is normal if $B_i$ is normal for every $i\in I$ by \cite[Corollary 21.3]{Mat80}. On the other hand, suppose that $A$ is normal and let $(B_i,B_i^+) = (A\langle \tfrac{T_1,\ldots,T_r}{s} \rangle, A\langle \tfrac{T_1,\ldots,T_r}{s} \rangle^+)$ for $s,T_1,\ldots,T_r \in A^{\circ \circ}$. Since $A[\tfrac{1}{s}]$ is normal, $A(\tfrac{T_1,\ldots,T_r}{s})^+$ is normal and excellent. Thus, $A\langle \tfrac{T_1,\ldots,T_r}{s} \rangle^+$ is normal by \cite[Theorem 79]{Mat80}, so $B_i$ is normal. 
\end{proof}

\begin{cor} \label{cor:normaletloc}
    Let $(A,A^+)$ be a reduced excellent Tate Huber pair. Let $(B,B^+)$ be a Tate Huber pair \'{e}tale over $(A,A^+)$. Then, $B$ is reduced, and $B$ is normal if $A$ is normal. 
\end{cor}
\begin{proof}
    By \Cref{lem:normalanloc} and \cite[Lemma 2.2.8]{Hub96}, we may localize $(B,B^+)$ so that $(A,A^+)\to (B,B^+)$ is decomposed into a finite \'{e}tale homomorphism and a rational localization. By \cite[Expose I, Theorem 9.5 (i)]{SGA1}, \'{e}tale homomorphisms preserve reducedness and normality, so the claim follows. 
\end{proof}

We say that an analytic adic space is reduced (resp. normal) and excellent if it is analytic locally isomorphic to the adic spectrum of a reduced (resp. normal) and excellent Tate Huber pair. By the proof of \Cref{lem:excellentst}, $\Spa(R)^\an$ is reduced and excellent for every reduced excellent complete adic ring $R$. 

\subsection{Finiteness criterion}
The aim of this section is to prove the following finiteness criterion. It is a key ingredient in our affine representability criterion (see \Cref{thm:affrep}). 

For a complete adic ring $R$ with an ideal of definition $I \subset R$, we say that a complete $I$-adic $R$-algebra $S$ is topologically of finite type if $S/IS$ is of finite type over $R/I$. 

\begin{prop} \label{thm:topfin}
    Let $R$ be a reduced excellent complete adic ring with an ideal of definition $p \in I\subset R$. Let $U\subset \Spa(R)^\an$ be a quasicompact open subset and let $S\subset \cl{O}^+(U)$ be an $I$-adically complete $R$-subalgebra. Suppose that the following conditions hold. 
    \begin{enumerate}
        \item The subspace topology on $S$ from $\cl{O}^+(U)$ is the $I$-adic topology. 
        \item $\Spa(S)^\an \to \Spa(R)^\an$ is a homeomorphism onto $U$. 
        \item $(S_\red)^\perf$ is perfectly of finite type over $(R_\red)^\perf$. 
    \end{enumerate}
    Then, $S$ is reduced, excellent and topologically of finite type over $R$, and $\Spa(S)^\an \to U$ is an isomorphism of adic spaces. 
\end{prop}

\begin{rmk}
    By \Cref{prop:subspexcellent}, the condition (1) is necessary for the claim. The merit of this criterion is that the conditions (2) and (3) can be checked from the $v$-sheafified map $\Spd(S) \to \Spd(R)$ by \cite[Lemma 15.7]{Sch17} and \cite[Proposition 18.3.1]{SW20}. 
\end{rmk} 

\begin{proof}

By \Cref{lem:Tatecov}, we can take a generator $f_1,\ldots,f_n\in R$ of an ideal of definition such that $U$ is the union of $R(\tfrac{f_1,\ldots,f_n}{f_i})$ with $1\leq i \leq r$ for some $1\leq r \leq n$. We may assume that $I=(f_1,\ldots,f_n)$. We keep the notation from the proof of \Cref{prop:subspexcellent}. Let $Y_0 \subset Y$ (resp.\ $Y_0^+ \subset Y^+$) be the union of $V_i$ (resp.\ $V_i^+$) with $1\leq i \leq r$. Then, the $I$-adic completion of $Y_0^+$ is a formal model of $U$. 

By the condition (2), $\lvert f_i \rvert \neq 0$ for some $1 \leq i \leq r$ for every $\vert \cdot \vert \in \Spa(S)^\an$. Thus, $f_1,\ldots,f_r$ generate an ideal of definition of $S$ by \Cref{lem:ancov}. Let $Z\to \Spec(S)$ be the blowup along $V(f_1,\ldots,f_r)$ and let $Z^+\to Z$ be the normalization in $\Spec(S) - V(f_1,\ldots,f_r) \to Z$. For each $1\leq i \leq r$, let $W_i\subset Z$ be the open subset where $f_i$ divides $f_j$ for every $1\leq j \leq r$ and let $W_i^+$ be the inverse image of $W_i$ in $Z^+$. 

\begin{lem} \label{lem:constrVW}
    There is a natural morphism $W_i^+\to V_i^+$ for every $1\leq i \leq r$. 
\end{lem}
\begin{proof}
    By construction, $W_i$ and $W_i^+$ is the spectrum of $S[\tfrac{f_1,\ldots,f_r}{f_i}]$ and $S(\tfrac{f_1,\ldots,f_r}{f_i})^+$, respectively. It is enough to show that $f_j/f_i \in S(\tfrac{f_1,\ldots,f_r}{f_i})^+$ for every $1 \leq j \leq n$. Since the rational domain $U(\tfrac{f_1,\ldots,f_r}{f_i}) \subset \Spa(S)^\an$ maps to $U(\tfrac{f_1,\ldots,f_n}{f_i}) \subset \Spa(R)^\an$ by the condition (2), we have a continuous homomorphism $R\langle \tfrac{f_1,\ldots,f_n}{f_i} \rangle\to S\langle  \tfrac{f_1,\ldots,f_r}{f_i} \rangle$. Thus, $f_j/f_i \in  S\langle  \tfrac{f_1,\ldots,f_r}{f_i} \rangle^+$. Since $S(\tfrac{f_1,\ldots,f_r}{f_i})^{+, \wedge}$ is automatically normal inside $S(\tfrac{f_1,\ldots,f_r}{f_i})^{+, \wedge}[\tfrac{1}{f_i}]$, we get a natural map
    \[
        S[\tfrac{f_1,\ldots,f_r}{f_i}]^\wedge \hookrightarrow S\langle  \tfrac{f_1,\ldots,f_r}{f_i} \rangle^+ \to S(\tfrac{f_1,\ldots,f_r}{f_i})^{+, \wedge}
    \]
    by passing to the $f_i$-adic completion $(-)^\wedge$ and the normalization. Thus, $f_j/f_i \in S(\tfrac{f_1,\ldots,f_r}{f_i})^{+, \wedge}$. 
    
    Let $\pi \colon S(\tfrac{f_1,\ldots,f_r}{f_i})^+ \to S(\tfrac{f_1,\ldots,f_r}{f_i})^{+, \wedge}$. Since $f_i$ is invertible in $S(\tfrac{f_1,\ldots,f_r}{f_i})^+$, we have 
    \[
        f_j \in \pi^{-1}(f_i \cdot S(\tfrac{f_1,\ldots,f_r}{f_i})^{+, \wedge}) = f_i \cdot S(\tfrac{f_1,\ldots,f_r}{f_i})^{+}. 
    \]
    Thus, we get $f_j/f_i \in S(\tfrac{f_1,\ldots,f_r}{f_i})^+$. 
\end{proof}

By gluing these morphisms, we get an affine morphism $Z^+\to Y^+_0$. Let $Z^+_0$ be the scheme-theoretic image of $Z^+ \to Z\times Y^+_0$. Since $Z^+$ is integral over $Z$ and $Y^+$ is of finite type over $R$, $Z_0^+$ is finite over $Z$. 

Let $\mfr{Y}_0^+$ (resp.\ $\mfr{Z}_0^+$) be the $(f_1,\ldots,f_r)$-adic completion of $Y_0^+$ (resp.\ $Z_0^+$). Since $S\subset \cl{O}^+(U)$, we have a morphism $\mfr{Y}_0^+ \to \Spf(S)$. We have the following commutative diagram. 
\begin{center}
    \begin{tikzcd}
        \mfr{Z}_0^+ \ar[r] \ar[rd] & \mfr{Y}_0^+ \ar[d] \\
        & \Spf(S). 
    \end{tikzcd}
\end{center}

\begin{lem}
    The underlying map $\lvert \mfr{Z}_0^+ \rvert \to \lvert \mfr{Y}_0^+ \rvert$ is surjective. 
\end{lem}
\begin{proof}
    By construction, $Y_0^+$ is $I$-torsion free, so the specialization map of $\mfr{Y}_0^+$ is surjective (see \cite[Proposition 3.1.5]{FK18}). Since the specialization map is functorial, it is enough to show that $(\mfr{Z}_0^+)^\an \to (\mfr{Y}_0^+)^\an$ is surjective (see \Cref{ssc:kimber} for the specialization maps of formal schemes over $\bb{Z}_p$). Since $Y_0^+$ (resp.\ $Z_0^+$) is finite over $Y_0$ (resp.\ $Z$), it follows from the construction that $(\mfr{Y}_0^+)^\an \cong \mfr{Y}_0^\an \cong U$ and $(\mfr{Z}_0^+)^\an \cong \mfr{Z}^\an \cong \Spa(S)^\an$. Thus, the claim follows from the condition (2). 
\end{proof}

Since $Z_0^+$ is proper over $\Spec(S)$, we see that $\mfr{Y}_0^+\to \Spf(S)$ is universally closed. 

Now, since $(S_\red)^\perf$ is perfectly of finite type over $(R_\red)^\perf$, we can take a homomorphism $P=R[T_1,\ldots,T_m] \to S$ such that $\Spec(S/IS) \to \Spec(P/IP)$ is universally closed. Then, for every $k\geq 0$, the composition $Y_0^+ \otimes_R (R/I^k) \to \Spec(S/I^kS) \to \Spec(P/I^kP)$ is proper. Since $R$ is Noetherian, it follows that $\Gamma(Y_0^+ \otimes_R (R/I^k), \cl{O})$ is finite over $P$. 

Let $S_k \subset \Gamma(Y_0^+ \otimes_R (R/I^k), \cl{O})$ be the image of $S$. Since $P$ is Noetherian, $S_k$ is also finite over $P$. Let $I_k = \Ker(S \twoheadrightarrow S_k)$. The condition (1) means that the subspace topology on $S$, which is defined by the system $\{I_k\}$, is equal to the $I$-adic topology. Thus, for sufficiently large $k$, we have $I_k \subset IS$, so $S/IS$ is finite over $P$. In particular, $S$ is topologically of finite type over $R$. 

By \Cref{lem:excellentst}, $\cl{O}^+(U)$ is reduced, so $S$ is reduced and excellent. In particular, $\Spa(S)^\an$ is an adic space. Since $(\mfr{Y}_0^+)^\an\cong U$, the analytification of $\mfr{Y}_0^+\to \Spf(S)$ gives a right inverse to $\Spa(S)^\an \to U$. Since $(\mfr{Z}_0^+)^\an\cong \Spa(S)^\an$, it is also a left inverse. Thus, we get $\Spa(S)^\an \cong U$. 

\end{proof}

\subsection{Formal modifications}

In this section, we explain an interpretation of formal modifications (see \cite[Tag 0GDK]{stacks-project}) of reduced excellent formal schemes in terms of adic spaces. The axiom of formal modifications consists of properness, rig-\'{e}taleness (see \cite[Tag 0ALP]{stacks-project}) and rig-surjectivity (see \cite[Tag 0AQQ]{stacks-project}). First, we treat properness and rig-surjectivity. Recall that a morphism of formal schemes is proper if it is topologically of finite type, separated and universally closed. We say that a formal scheme is thick if its specialization map is surjective (see \Cref{ssc:kimber}).  

\begin{lem} \label{lem:dilatationproper}
    Let $R$ be a reduced excellent complete adic ring with an ideal of definition $I\subset R$. Let $\mfr{X}$ be a thick separated formal scheme topologically of finite type over $R$. If $\mfr{X}^\an \to \Spa(R)^\an$ is an isomorphism, then $\mfr{X}$ is proper and rig-surjective over $\Spf(R)$ and $\mfr{X}\to \mfr{X} \times_{\Spf(R)} \mfr{X}$ is rig-surjective. 
\end{lem}

\begin{proof}
    Let $\{\mfr{U}_i\}_{i\in I}$ be a finite affine covering of $\mfr{X}$. As in \Cref{lem:Tatecov}, we can take a set of generators $f_1,\ldots,f_n\in R$ of an ideal of definition such that for every $1\leq j \leq n$, $R(\tfrac{f_1,\ldots,f_n}{f_j})$ is contained in $\mfr{U}_i^\an$ for some $i\in I$. 

    We repeat the construction in the proof of \Cref{prop:subspexcellent} for this choice of $f_1,\ldots,f_n$. For each $1\leq j \leq n$, let $i_j\in I$ be an element such that $R(\tfrac{f_1,\ldots,f_n}{f_j})\subset \mfr{U}_{i_j}^\an$. Let $\mfr{Y}^+$ (resp.\ $\mfr{V}_j^+$) be the $I$-adic completion of $Y^+$ (resp.\ $V_j^+$). By the property of $\mfr{V}_j^+$, this inclusion extends to $\mfr{V}_j^+ \to \mfr{U}_{i_j}$ and the composition $\mfr{V}_j^+ \to \mfr{U}_{i_j} \hookrightarrow \mfr{X}$ is independent of the choice of $i_j$. Thus, by gluing the morphisms for all $1 \leq j \leq n$, we get a morphism $\mfr{Y}^+\to \mfr{X}$. Since $\mfr{X}$ is thick and $(\mfr{Y}^+)^\an \cong \Spa(R)^\an$, we see by looking at the specialization maps that $\lvert \mfr{Y}^+ \rvert\to \lvert \mfr{X} \rvert$ is surjective. Since $Y^+$ is proper over $R$, it follows that $\mfr{X}$ is proper. 

    Next, we prove rig-surjectivity. It is enough to show that for every adic morphism $x_0\colon \Spf(V) \to \Spf(R)$ with $V$ a complete discrete valuation ring, there is a unique lift $\Spf(V)\to \mfr{X}$. Let $K=\Frac(V)$. Since $x_0$ is adic, we have $x_0^\an \colon \Spa(K,V) \to \Spa(R)^\an \cong \mfr{X}^\an$. Thus, $x_0^\an$ factors through $\mfr{U}_i^\an$ for some $i\in I$. Since $\mfr{U}_i$ is affine, $x_0^\an$ uniquely lifts to $\Spf(V)\to \mfr{U}_i$ and we get a desired lift of $x_0$. It is easy to see that such a lift is independent of the choice of $\mfr{U}_i$, so we get the uniqueness of a lift of $x_0$. 
\end{proof}

Next, we treat rig-\'{e}taleness. Here, we can only deal with the monogeneous case due to essential technical difficulty. 

\begin{lem} \label{lem:rigetan}
    Let $R$ be a Noetherian complete adic ring with the $\varpi$-adic topology for $\varpi \in R$. Let $S$ be a $\varpi$-torsion free complete adic $R$-algebra topologically of finite type. Then, $\Spa(S)^\an \to \Spa(R)^\an$ is \'{e}tale if and only if $R\to S$ is rig-\'{e}tale. 
\end{lem}
\begin{proof}
    First, suppose that $R\to S$ is rig-\'{e}tale. By \cite[Tag 0AKG]{stacks-project}, there is a finite type $R$-algebra $S_0$ such that $S_0[\tfrac{1}{\varpi}]$ is \'{e}tale over $R[\tfrac{1}{\varpi}]$ and $(S_0)^\wedge \cong S$. It follows from \cite[Corollary 1.7.3 (iii)]{Hub96} that $\Spa(S)^\an \to \Spa(R)^\an$ is \'{e}tale. 
    
    Next, suppose that $\Spa(S)^\an \to \Spa(R)^\an$ is \'{e}tale. By \cite[Corollary 1.7.3 (iii)]{Hub96}, there is a finite type $R$-algebra $S_0$ such that $S_0[\tfrac{1}{\varpi}]$ is \'{e}tale over $R[\tfrac{1}{\varpi}]$ and $\Gamma(\Spa(S)^\an, \cl{O}^+)$ is isomorphic to the integral closure of $(S_0)^\wedge$ in $(S_0)^\wedge[\tfrac{1}{\varpi}]$. Since $S$ is topologically of finite type over $R$ and is a subring of $\Gamma(\Spa(S)^\an, \cl{O}^+)$ as $S$ is $\varpi$-torsion free, we may enlarge $S_0$ in $S_0[\tfrac{1}{\varpi}]$ so that $(S_0)^\wedge$ contains $S$. We will show that there is a finite type $R$-subalgebra $S_1 \subset S_0$ such that $(S_1)^\wedge \cong S$. 

    Since $\Gamma(\Spa(S)^\an, \cl{O}^+)$ is also the integral closure of $S$ in $S[\tfrac{1}{\varpi}]$, $(S_0)^\wedge$ is finite over $S$. In particular, for sufficiently large $n\geq 0$, we have $\varpi^n (S_0)^\wedge \subset S$. Let $A\subset S_0/\varpi^n S_0$ be the image of $S \to (S_0)^\wedge \to S_0/\varpi^n S_0$ and let $S_1 \subset S_0$ be the inverse image of $A$. Then, $S_0/\varpi^n S_0$ is finite over $A$, so there is a finite $S_1$-submodule $M \subset S_0$ such that $S_0 = M + \varpi^n S_0$. Since $\varpi^n S_0 \subset S_1$, $S_0 = M + S_1$, so $S_0$ is finite over $S_1$. Since $S_0$ is of finite type over $R$ and $R$ is Noetherian, $S_1$ is of finite type over $R$. By construction, we see that $(S_1)^\wedge \subset (S_0)^\wedge$ is equal to $S$. Thus, $R\to S$ is rig-\'{e}tale by \cite[Tag 0AKG]{stacks-project}. 
\end{proof}

The following lemmas are needed to compare conditions on rig-points with those on analytic points. 

\begin{lem} \label{lem:existencerigpoint}
    Let $\mfr{X}$ be a Noetherian formal scheme. For every nonempty open subset $U\subset \mfr{X}^\an$, there is a complete discrete valuation ring $V$ with an adic morphism $\Spf(V) \to \mfr{X}$ such that $\Spa(K,V) \to \mfr{X}^\an$ factors through $U$ where $K=\Frac(V)$. 
\end{lem}
\begin{proof}
    We may assume that $\mfr{X}$ is affine. Let $\mfr{X}=\Spf(R)$. Since $U$ is nonempty, there is a set of generators $f_1,\ldots,f_n \in R$ of an ideal of definition such that $R(\tfrac{f_1,\ldots,f_n}{f_1})$ is nonempty and contained in $U$ (cf. \Cref{lem:Tatecov}). By replacing $R$ by $R[\tfrac{f_1,\ldots,f_n}{f_1}]^\wedge$, we may assume that $U=\Spa(R)^\an$. It is enough to show that there is an adic morphism $\Spf(V) \to \Spf(R)$ with $V$ a complete discrete valuation ring. 
    
    Since $R$ is $I$-adically complete, there is a point $x\in V(I) \subset \Spec(R)$ that lies in the closure of $\Spec(R)-V(I) \hookrightarrow \Spec(R)$. By applying \cite[Tag 0CM2]{stacks-project} to $\Spec(R)-V(I) \hookrightarrow \Spec(R)$, we see that there is a discrete valuation $R$-algebra $V$ such that $\Spec(V) \to \Spec(R)$ sends the closed point to $x$ and the generic point to a point outside $V(I)$. Then, $\Spf(V^\wedge) \to \Spf(R)$ is an adic morphism where $V^\wedge$ is the completion of $V$. 
\end{proof}

\begin{lem} \label{lem:complementrigpoint}
    Let $\mfr{X}$ be a Noetherian formal scheme. For every quasicompact open immersion $U\hookrightarrow \mfr{X}^\an$ with $U\neq \mfr{X}^\an$, there is a complete discrete valuation ring $V$ with an adic morphism $\Spf(V) \to \mfr{X}$ such that $\Spa(K,V) \to \mfr{X}^\an$ does not factor through $U$ where $K=\Frac(V)$. 
\end{lem}
\begin{proof}
    We may assume that $\mfr{X}$ is affine. Let $\mfr{X}=\Spf(R)$. By \Cref{lem:Tatecov}, there is a set of generators $f_1,\ldots,f_n \in R$ of an ideal of definition such that $U$ is the union of $R(\tfrac{f_1,\ldots,f_n}{f_i})$ with $1\leq i \leq r$ for some $1\leq r \leq n$. Let $Y \to \Spec(R)$ be the blowup along $V(f_1,\ldots,f_n)$ and let $V_i\subset Y$ be the open subset where $f_i$ divides $f_j$ for every $1\leq j \leq n$. Let $Y_0 \subset Y$ be the union of $V_i$ for all $1\leq i \leq r$. Let $\mfr{Y}$ (resp. $\mfr{Y}_0$, $\mfr{V}_i$) be the $(f_1,\ldots,f_n)$-adic completion of $Y$ (resp. $Y_0$, $V_i$). We have $\mfr{V}_i^\an \cong R(\tfrac{f_1,\ldots,f_n}{f_i})$ and $U \cong \mfr{Y}_0^\an$. Since $U\neq \mfr{X}^\an$, $Y_0 \neq Y$. Since $Y$ is proper over $R$, the image of $Y-Y_0 \to \Spec(R)$ is a nonempty closed subset, so there is a point $y\in Y-Y_0$ whose image in $\Spec(R)$ lies in $V(f_1,\ldots,f_n)$. As in the proof of \Cref{lem:existencerigpoint}, there is an adic morphism $\Spf(V)\to \mfr{Y}$ such that the closed point maps to $y$. By looking at the specialization map, we see that $\Spa(K,V) \to \mfr{Y}^\an \cong \Spa(R)^\an$ does not factor through $U$. 
\end{proof}

\begin{prop} \label{prop:formalmod}
    Let $R$ be a reduced excellent complete adic ring with the $\varpi$-adic topology for $\varpi \in R$. Let $\mfr{X}$ be a thick separated formal scheme topologically of finite type over $R$. Then, $\mfr{X} \to \Spf(R)$ is a formal modification if and only if $\mfr{X}^\an \to \Spa(R)^\an$ is an isomorphism. 
\end{prop}
\begin{proof}
    First, suppose that $\mfr{X}^\an \to \Spa(R)^\an$ is an isomorphism. Then, $\mfr{X}\to \Spf(R)$ is proper, rig-\'{e}tale and rig-surjective by \Cref{lem:dilatationproper} and \Cref{lem:rigetan}. Moreover, $\mfr{X}\to \mfr{X}\times_{\Spf(R)} \mfr{X}$ is rig-surjective by \Cref{lem:dilatationproper}. Thus, $\mfr{X}\to \Spf(R)$ is a formal modification. 

    Next, suppose that $\mfr{X}\to \Spf(R)$ is a formal modification. By \Cref{lem:rigetan}, $\mfr{X}^\an \to \Spf(R)^\an$ is \'{e}tale. By \cite[Proposition 1.6.8]{Hub96}, the diagonal $\mfr{X}^\an \to \mfr{X}^\an \times_{\Spf(R)^\an} \mfr{X}^\an$ is an open immersion. Since $\mfr{X}$ is separated, $\mfr{X}^\diamond$ is separated (see \cite[Proposition 4.17]{Gle24}) and $\lvert \mfr{X}^\an \rvert \to \lvert \mfr{X}^\an \times_{\Spf(R)^\an} \mfr{X}^\an \rvert$ is a closed immersion by \cite[Lemma 15.6]{Sch17}. Thus, $\mfr{X}^\an \to \mfr{X}^\an \times_{\Spf(R)^\an} \mfr{X}^\an$ is a closed and open immersion. By \Cref{lem:existencerigpoint}, the rig-surjectivity of $\mfr{X}\to \mfr{X}\times_{\Spf(R)} \mfr{X}$ implies $\mfr{X}^\an \to \mfr{X}^\an\times_{\Spa(R)^\an} \mfr{X}^\an$ is an isomorphism. Since $\mfr{X}^\an \to \Spa(R)^\an$ is \'{e}tale, it follows that $\mfr{X}^\an \to \Spa(R)^\an $ is an open immersion. Moreover, since $\mfr{X}$ is topologically of finite type over $R$, $\mfr{X}^\an \to \Spa(R)^\an$ is a quasicompact open immersion. Thus, since $\mfr{X}\to \Spf(R)$ is rig-surjective, it follows from \Cref{lem:complementrigpoint} that $\mfr{X}^\an \to \Spa(R)^\an $ is an isomorphism. 
\end{proof}

\section{The theory of $v$-sheaves and kimberlites}\label{sec:review-v}

\subsection{Basic facts on $v$-sheaves} \label{ssec:basicvsh}

In this section, we review the theory of $v$-sheaves developed in \cite{Sch17} and \cite{AGLR22}. 

Let $\Perf$ be the category of perfectoid spaces over $\bb{F}_p$. The $v$-topology is a Grothendieck topology on $\Perf$ introduced in \cite[Definition 8.1]{Sch17}. A sheaf (resp.\ stack) on $\Perf$ with the $v$-topology is called a $v$-sheaf (resp.\ $v$-stack). Via the Yoneda embedding, perfectoid spaces can be regarded as $v$-sheaves. 

A $v$-sheaf $X$ is small if there is a surjection $Y\to X$ of $v$-sheaves from a perfectoid space $Y$. Moreover, $X$ is quasicompact if we can take $Y$ to be quasicompact. Then, quasicompact maps and quasiseparated maps of $v$-sheaves can be defined (see \cite[Section 8]{Sch17}). 

A map of $v$-sheaves is said to be qcqs if it is quasicompact and quasiseparated. An important property of $v$-sheaves is that injectivity and surjectivity of qcqs maps can be checked on geometric points (see \cite[Lemma 12.11, Proposition 12.15]{Sch17}). The underlying space of a $v$-sheaf $X$ is denoted by $\lvert X \rvert$ (\cite[Proposition 12.7]{Sch17}). For a topological space $T$, the functor sending $S\in \Perf$ to the set of continuous maps from $\lvert S \rvert$ to $T$ is a $v$-sheaf and denoted by $\underline{T}$ (\cite[p.53]{Sch17}). For example, these properties are used in the following way. 

\begin{lem}\label{lem:eqqs}
    Let $S$ be a $v$-sheaf and let $X$ and $Y$ be $v$-sheaves over $S$. Let $f, g\colon X\to Y$ be two maps of $v$-sheaves over $S$. If $X$ is small, $Y$ is  quasiseparated over $S$, and $f(x)=g(x)$ for every $x\in X(C,C^+)$ with $C$ an algebraically closed perfectoid field with an open and bounded valuation subring $C^+\subset C$, we have $f=g$.  
\end{lem}
\begin{proof}
    The diagonal $\Delta_{Y/S}\colon Y\to Y\times_S Y$ is a quasicompact injection. It is enough to show that $f\times g\colon X\to Y\times_S Y$ factors through $\Delta_{Y/S}$. Let $Z=X\times_{Y\times_S Y, \Delta_{Y/S}} Y$. Since the map $Z\to X$ is quasicompact and $X$ is small, $Z$ is small. For every $x\in X(C,C^+)$ as above, we have $f(x)=g(x)$, so $x$ lies in $Z$. In particular, $\lvert Z \rvert \to \lvert X \rvert$ is surjective, so the claim follows from \cite[Lemma 12.11]{Sch17}. 
\end{proof}

Let $X$ be a small $v$-sheaf and let $Y$ be a subsheaf of $X$. The intersection of all closed subsheaves of $X$ containing $Y$ is called the $v$-closure of $Y$ in $X$ and denoted by $Y^\clos$ (see \cite[Definition 2.2]{AGLR22}). Every closed subsheaf of a small $v$-sheaf $X$ is of the form $\underline{S} \times_{\underline{\lvert X \rvert}} X$ with $S \subset \lvert X \rvert$ a weakly generalizing closed subspace (see \cite[Definition 2.3, Lemma 2.7]{AGLR22}). Moreover, for a subsheaf $Y \subset X$, we have $Y^\clos = \underline{\lvert Y \rvert^\wgc}\times_{\underline{\lvert X \rvert}} X$ with $\lvert Y \rvert^\wgc$ the weakly generalizing closure of $\lvert Y \rvert$ in $\lvert X \rvert$ (see \cite[Proposition 2.8]{AGLR22}). 

Let $\mfr{X}$ be a formal scheme over $\bb{Z}_p$. The functor taking $S\in \Perf$ to the set of pairs $((S^\sharp,\iota), f \colon S^\sharp \to \mfr{X}^\ad)$ with $(S^\sharp, \iota)$ an untilt of $S$ is a $v$-sheaf and denoted by $\mfr{X}^\diamond$. For an adic ring $R$ over $\bb{Z}_p$, $\Spf(R)^\diamond$ is denoted by $\Spd(R)$. 

In general, the functor $\mfr{X}\mapsto \mfr{X}^\diamond$ from formal schemes over $\bb{Z}_p$ to small $v$-sheaves over $\Spd(\bb{Z}_p)$ is not fully faithful. However, the $v$-sheafification is fully faithful if we suitably restrict the category of formal schemes (cf. \Cref{lem:perfdrep}, \Cref{lem:uniqmapmaxlgood}).

It is useful that $\Spd(f)\colon \Spd(S)\to \Spd(R)$ is qcqs for an adic homomorphism $f\colon R\to S$ of adic rings over $\bb{Z}_p$ (see \cite[Lemma 2.26]{Gle24}). This implies that $\mfr{X}^\diamond$ is qcqs over $\Spd(R)$ for a qcqs adic formal $R$-scheme $\mfr{X}$. 

There is a $v$-sheaf attached to a scheme $X$ over $\bb{Z}_p$ that is often denoted by $X^\diamondsuit$ (see \cite[Section 2.2]{AGLR22}). It is a $v$-sheafification of the functor taking an affinoid perfectoid pair $(S,S^+)$ over $\bb{F}_p$ to the set of pairs $((S^\sharp,\iota), x)$ with $(S^\sharp,\iota)$ an untilt of $S$ and $x\in X(S^\sharp)$. The following lemma exhibits a certain relation between $X^\diamond$ and $X^\diamondsuit$. 

\begin{lem}
    \label{lem:perfprop}
    Let $R$ be a $\bb{Z}_p$-algebra and let $X$ be a proper $R$-scheme. There is a natural isomorphism $X^\diamond \cong X^\diamondsuit \times_{\Spec(R)^\diamondsuit} \Spec(R)^\diamond$. 
\end{lem}
\begin{proof}
    It follows from the same reason as in the comment after \cite[Remark 2.11]{AGLR22}. There is a natural inclusion $X^\diamond \to X^\diamondsuit \times_{\Spec(R)^\diamondsuit} \Spec(R)^\diamond$, and it is surjective by the valuative criterion for properness. 
\end{proof}

Here, we record the following basic lemma. 

\begin{lem}\label{lem:mapSpd}
    Let $R\to S$ be an adic integral homomorphism of adic rings. Then, $\Spd(S)\to \Spd(R)$ is proper. Moreover, if $R\to S$ is injective, $\Spd(S)\to \Spd(R)$ is surjective. 
\end{lem}
\begin{proof}
    We use a valuative criterion for properness (\cite[Proposition 18.3]{Sch17}) to the qcqs map $\Spd(S)\to \Spd(R)$. Let $K$ be a perfectoid field in characteristic $p$ with an open and bounded valuation subring $K^+ \subset K$ and consider the following diagram. 
    \begin{center}
        \begin{tikzcd}
            \Spa(K,O_K) \ar[r] \ar[d] & \Spd(S) \ar[d]\\
            \Spd(K,K^+) \ar[r] & \Spd(R)
        \end{tikzcd}
    \end{center}
    Let $(K^{\sharp}, K^{+\sharp})$ be the untilt of $(K,K^+)$ with an identification $\iota\colon (K^\sharp)^\flat \cong K$ and let $S\to O_{K^\sharp}$ be the corresponding homomorphism. Since it maps $R$ to $K^{+\sharp}$ and $K^{+\sharp}$ is integrally closed in $K^\sharp$, we see that the map factors through $K^{+\sharp}$. Thus, we obtain a valuative criterion. 

    Next, suppose that $R\to S$ is injective. Take an arbitrary map $R\to C^+$ with $C$ an algebraically closed perfectoid field with an open and bounded valuation subring $C^+\subset C$. Since $R\to S$ is injective, the map $\Spec(S)\to \Spec(R)$ is surjective. Since $C$ is algebraically closed, the map $R\to C$ has a lift $S\to C$. Since $S$ is integral over $R$, it maps $S$ to $C^+$. Thus, the map $\Spd(C,C^+)\to \Spd(R)$ admits a lift to $\Spd(S)$. 
\end{proof}

\subsection{Kimberlites and thick closures} \label{ssc:kimber}
In this section, we review the theory of kimberlites developed in \cite{Gle24}, \cite{AGLR22} and \cite{Gle26}.

Let $\SchPerf$ be the category of perfect schemes. It is equipped with the $v$-topology (see \cite[Definition 2.1]{BS17}) and a sheaf on $\SchPerf$ with the $v$-topology is called a scheme-theoretic $v$-sheaf. 

Let $X$ be a $v$-sheaf. The functor taking $S\in \SchPerf$ to the set $\Hom(S^\diamond, X)$ is a scheme-theoretic $v$-sheaf and denoted by $X^\red$ (see \cite[Lemma 2.26]{Gle24}). If $X$ is small, $X^\red$ is also small. Then, formally adic morphisms can be defined as in \cite[Definition 3.20]{Gle24}. A map $Y\to X$ of $v$-sheaves is formally adic if $ (Y^\red)^\diamond  \cong (X^\red)^\diamond  \times_X Y$. 

We explain several terminology to review the definition of kimberlites. A small $v$-sheaf $X$ is formally separated if the diagonal $\Delta_X \colon X\to X\times X$ is a formally adic closed immersion (see \cite[Definition 3.27]{Gle24}). A small $v$-sheaf $X$ is $v$-formalizing if it satisfies the following equivalent conditions (see \cite[Definition 4.5, 4.6, Lemma 4.7]{Gle24}, \cite[Proposition 2.22]{AGLR22}). 
\begin{enumerate}
    \item There is a surjection of $v$-sheaves from a disjoint union of $\Spd(R^+)$ with $(R,R^+)$ a perfectoid Huber pair to $X$. 
    \item For every map $\Spa(R,R^+)\to X$ from an affinoid perfectoid space, there is a $v$-cover $\Spa(S,S^+) \to \Spa(R,R^+)$ that admits an extension $\Spd(S^+)\to X$. 
\end{enumerate}
If $X$ is formally separated, $\Spa(S,S^+)\to X$ admits at most one extension $\Spd(S^+)\to X$ (see \cite[Proposition 4.9]{Gle24}). Now, a small $v$-sheaf $X$ is a prekimberlite if 
\begin{enumerate}
    \item $X$ is formally separated and $v$-formalizing, and
    \item $X^\red$ is represented by a perfect scheme and $(X^\red)^\diamond \to X$ is a closed immersion. 
\end{enumerate}

For a prekimberlite $X$, the open subsheaf $X\backslash (X^\red)^\diamond$ is called the analytic locus of $X$ and denoted by $X^\an$. The $v$-sheaf $\mfr{X}^\diamond$ is a prekimberlite for a separated formal scheme $\mfr{X}$ over $\bb{Z}_p$ (see \cite[Proposition 4.17]{Gle24}) and we have $\mfr{X}^{\diamond, \red}=(\mfr{X}_\red^\perf)^\diamond$.

One important property is that a prekimberlite $X$ admits a specialization map $\spc_X \colon \lvert X \rvert \to \lvert X^\red \rvert$ (see \cite[Definition 4.12]{Gle24}). The restriction of $\spc_X$ to $X^\an$ is denoted by $\spc_{X^\an}$. When $\mfr{X}$ is a separated Noetherian formal scheme over $\bb{Z}_p$, $\spc_{(\mfr{X}^\an)^\diamond}$ coincides with the usual specialization map of $\mfr{X}$ (see e.g. \cite[Section 3.1]{FK18}). 

For an open subspace $U\subset X^\red$, the open subsheaf of $X$ corresponding to $\spc_X^{-1}(U)$ is denoted by $\widehat{X}_{/U}$. When $X$ is represented by a separated formal scheme $\mfr{X}$ over $\bb{Z}_p$, $\widehat{X}_{/U}$ is represented by the open formal subscheme of $\mfr{X}$ whose underlying space is $U$ (see the proof of \cite[Theorem 2.16]{AGLR22}). 

Let $\mfr{X}$ be a Noetherian formal scheme. The specialization map of $\mfr{X}$ is quasicompact and closed (see \cite[Theorem 3.1.2]{FK18}), and surjective if $\mfr{X}$ is distinguished in the sense of \cite[Definition 2.1.8]{FK18} (see \cite[Proposition 3.1.5]{FK18}). However, we need to put more assumptions on a prekimberlite $X$ so that $\spc_{X^\an}$ behaves similarly. 

For a perfect scheme $Y$, let $Y^{\diamond/\circ}$ be the analytic sheafification of the functor taking $\Spa(R,R^+)\in \Perf$ to $Y(R^+_\red)$. For a prekimberlite $X$, there is a map $\SP_X\colon X \to (X^\red)^{\diamond/\circ}$ introduced in \cite[Section 4.4]{Gle24}. We say that $X$ is valuative if $\SP_X$ is partially proper. A valuative prekimberlite $X$ is a kimberlite if $X^\an$ is a quasiseparated locally spatial diamond and $\spc_{X^\an}$ is quasicompact (see \cite[Definition 4.35]{Gle24}). For a separated formal scheme $\mfr{X}$ over $\bb{Z}_p$, $\mfr{X}^\diamond$ is a kimberlite and the specialization map of $\mfr{X}$ refers to the map $\spc_{(\mfr{X}^{^\an})^\diamond}$. The specialization map $\spc_{X^\an}$ of a kimberlite $X$ is a spectral closed map (see \cite[Theorem 4.40]{Gle24}). 

Following \cite[Definition 4.7]{Gle26}, we say that a prekimberlite $X$ is thick if $\spc_{X^\an}$ is surjective. Similarly, we say that a separated formal scheme $\mfr{X}$ over $\bb{Z}_p$ is thick if the specialization map of $\mfr{X}$ is surjective. We say that an adic ring $R$ over $\bb{Z}_p$ is thick if $\Spf(R)$ is thick. 

\begin{lem}\label{lem:flatpres}
    A complete adic ring $R$ with an ideal of definition $p\in I\subset R$ is thick if and only if for every $f\in R$ such that $f^n\cdot I^n \subset I^{n+1}$ for some $n\geq 1$, we have $f\in R^{\circ \circ}$. 
\end{lem}
\begin{proof}
    First, $\Spf(R)$ is not thick if and only if for some $f\in R - R^{\circ \circ}$, $\Spa(R[\tfrac{1}{f}], R[\tfrac{1}{f}])$ contains only discrete points. It is equivalent to the condition that the $I$-adic completion $R[\tfrac{1}{f}]^\wedge$ is discrete (see e.g. \cite[Proposition 7.49 (2)]{Wed12}). It is equivalent to $I^n \cdot R[\tfrac{1}{f}] = I^{n+1}\cdot R[\tfrac{1}{f}]$ for some $n\geq 1$. Since $I$ is finitely generated, it is equivalent to $f^n\cdot I^n \subset I^{n+1}$ for some $n\geq 1$. 
\end{proof}

Following \cite[Chapter II, Definition 2.1.8]{FK18}, we say that a complete adic ring $R$ is distinguished if $R$ is $I$-torsion free for some ideal of definition $I\subset R$, and a formal scheme $\mfr{X}$ is distinguished if it is Zariski locally isomorphic to the formal spectrum of a distinguished complete adic ring. By \cite[Proposition 3.1.5]{FK18}, a distinguished separated Noetherian formal scheme over $\bb{Z}_p$ is thick. Here, we verify that this holds without Noetherian assumption. 

\begin{lem}\label{cor:nzdgenerator}
    Let $R$ be a complete adic ring in which $p$ is topologically nilpotent. If $R$ is distinguished, $R$ is thick.  
\end{lem}
\begin{proof}
    First, suppose that $R$ is $\varpi$-adic for some non-zero-divisor $\varpi \in R$. Then, the claim can be checked by \Cref{lem:flatpres}. Let $f\in R$ be an element such that $f^n\cdot (\varpi^n) \subset (\varpi^{n+1})$ for some $n\geq 1$. We can write $f^n \varpi^n=\varpi^{n+1}x$ for some $x\in R$. Since $\varpi$ is a non-zero-divisor, we have $f^n=\varpi x$. Thus, we have $f\in R^{\circ \circ}$.

    The general case can be reduced to the above case. Let $I\subset R$ be a finitely generated ideal of definition and let $Y\to \Spec(R)$ be the blowup along $V(I)$. Since $R$ is $I$-torsion free, $\Spec(R)-V(I)$ is dense in $\Spec(R)$ and $Y\to \Spec(R)$ is surjective. Let $\mfr{Y}$ be the $I$-adic completion of $Y$. Since $\mfr{Y}$ is locally monogeneous and distinguished, $\mfr{Y}$ is thick. Since the specialization maps are functorial and $Y\to \Spec(R)$ is surjective, $\Spf(R)$ is thick. 
\end{proof}

\begin{defi}
    Let $X$ be a kimberlite and let $Z\subset X^\an$ be a closed subsheaf. When the weakly generalizing subset
    \[
        \lvert \spc_{X^\an}(Z)^\diamond \rvert \cup \lvert Z \rvert \subset \lvert X \rvert
    \]
    is closed, we say that the associated closed subsheaf of $X$ is the thick closure of $Z$ in $X$. 
\end{defi}


Here, $\spc_{X^\an}(Z) \subset X^\red$ is a closed subscheme since $X$ is a kimberlite. 

\begin{exa}
    Suppose that $X$ is represented by a separated formal scheme $\mfr{X}$ and let $\mfr{Z} \subset \mfr{X}$ be a thick closed formal subscheme. Then, $\mfr{Z}^\diamond$ is the thick closure of $(\mfr{Z}^\an)^\diamond \subset X^\an$. 
\end{exa}


Though thick closures and $v$-closures may differ in general (see \cite[footnote 10, Lemma 2.37]{AGLR22}), they will coincide under suitable Noetherian assumptions due to the following. 

\begin{lem} \label{lem:thickvclos}
    Let $R$ be a Noetherian adic ring with an ideal of definition $I\subset R$. If $R$ is $I$-torsion free, $\Spd(R)^\an$ is dense in $\Spd(R)$. 
\end{lem}
\begin{proof}
    This can be proved in the same way as \cite[Lemma 4.4]{Lou20}. Let $f_1,\ldots,f_n\in I$ be a set of generators and let $(S,S^+)=(R\llbracket s \rrbracket \langle \tfrac{f_1,\ldots,f_r}{s} \rangle, R\llbracket s \rrbracket \langle \tfrac{f_1,\ldots,f_r}{s} \rangle^+)$, which is a rational localization of $R\llbracket s \rrbracket$. Since $R$ is Noetherian and $I$-torsion free, $(S,S^+)$ is strongly Noetherian and $I$-torsion free. Then, for any rational localization $(T,T^+)$ of $(S,S^+)$, $T$ is flat over $S$, so $T$ is $I$-torsion free. In particular, the inverse image of $\Spa(R)^\an$ in $\Spa(S,S^+)$ is dense. Since $\Spd(S,S^+) \to \Spd(R)$ is surjective and $\lvert \Spd(S,S^+) \rvert \cong \lvert \Spa(S,S^+) \rvert$ by \cite[Proposition 15.4]{Sch17}, we see that $\Spd(R)^\an$ is dense in $\Spd(R)$. 
\end{proof}

\subsection{Geometric quotients} \label{ssc:geomquot}

In this section, we introduce geometric quotients of $v$-sheaves by group actions. The theory of $v$-sheaves is quite topological and the theory of geometric quotients works well. 

\begin{defi}
    Let $\Gamma$ be a group and let $X$ be a $v$-sheaf with a $\Gamma$-action. A map $\pi\colon X\to Y$ of $v$-sheaves is a geometric quotient of $X$ by $\Gamma$ if $\pi$ is surjective, and for every geometric point $x\in X(C,C^+)$ with $C$ an algebraically closed perfectoid field with an open and bounded valuation subring $C^+\subset C$, we have $\pi^{-1}(\pi(x)) = \{ \gamma \cdot x \vert \gamma \in \Gamma \}$. 
\end{defi}

\begin{lem} \label{lem:bcquot}
    Let $X$ be a $v$-sheaf with an action of a group $\Gamma$. Let $\pi\colon X\to Y$ be a geometric quotient of $X$ by $\Gamma$. For every map $Z\to Y$ of $v$-sheaves, the base change $X\times_{Y} Z\to Z$ is a geometric quotient of $X\times_Y Z$ by the natural $\Gamma$-action. 
\end{lem}
\begin{proof}
    It is immediate from the definition. 
\end{proof}

We need some assumptions to show the uniqueness of geometric quotients. For this, we fix a $v$-sheaf $S$ that plays the role of a base and consider small $v$-sheaves over $S$. 

\begin{prop}\label{prop:geomquotiscat}
    Let $X$ be a small $v$-sheaf with an action of a group $\Gamma$ over $S$. Let $\pi\colon X\to Y$ be a geometric quotient of $X$ by $\Gamma$ over $S$. If $Y$ is quasiseparated over $S$, then we have the following. 
    \begin{enumerate}
        \item For every $\gamma\in \Gamma$, we have $\gamma\cdot \pi=\pi$. 
        \item For every small $v$-sheaf $Z$ quasiseparated over $S$, a map $f\colon X \to Z$ over $S$ factors through $Y$ if and only if $\gamma\cdot f=f$ for every $\gamma\in \Gamma$. 
    \end{enumerate}
\end{prop}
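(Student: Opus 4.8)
The plan is to reduce both statements to a single standard fact about $v$-sheaves and then read them off from the defining property of $\pi$ on geometric points. The fact I would use is: if $\cl{H}$ is quasiseparated over $\cl{S}$, then for any $v$-sheaf $\cl{Y}$ over $\cl{S}$, two maps $g_1,g_2\colon\cl{Y}\to\cl{H}$ over $\cl{S}$ that agree on every geometric point $\mrm{Spa}(C,C^+)\to\cl{Y}$ (with $(C,C^+)$ an analytic algebraically closed field) are equal. To prove this I would argue that the equalizer $\cl{E}=\cl{Y}\times_{(g_1,g_2),\,\cl{H}\times_{\cl{S}}\cl{H},\,\Delta}\cl{H}$ maps to $\cl{Y}$ by a monomorphism which is quasicompact, since quasiseparatedness of $\cl{H}$ makes the diagonal $\Delta=\Delta_{\cl{H}/\cl{S}}$ quasicompact; a monomorphism is an isomorphism once it is surjective, and surjectivity of $\cl{E}\hookrightarrow\cl{Y}$ is checked $v$-locally on $\cl{Y}$, hence after base change along an affinoid perfectoid $\mrm{Spa}(R,R^+)\to\cl{Y}$, where one is left with a quasicompact monomorphism into $\mrm{Spa}(R,R^+)$ through which every geometric point factors; since every point of $|\mrm{Spa}(R,R^+)|$ lies in the image of a geometric point and a quasicompact morphism is surjective as soon as it is surjective on underlying spaces, this monomorphism is an isomorphism.

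\emph{Part (1).} Fix $\gamma\in\Gamma$ and write $\rho_\gamma\colon\cl{F}\to\cl{F}$ for its action; since $\cl{G}$ carries the trivial $\Gamma$-action, $\gamma\cdot\pi=\pi\circ\rho_\gamma^{-1}$. For a geometric point $x\in\cl{F}(C,C^+)$ the element $\rho_\gamma^{-1}(x)$ lies in the $\Gamma$-orbit of $x$, hence in $\pi^{-1}(\pi(x))$, so $(\gamma\cdot\pi)(x)=\pi(\rho_\gamma^{-1}(x))=\pi(x)$. Thus $\gamma\cdot\pi$ and $\pi$ agree on all geometric points of $\cl{F}$, and the fact above (with $\cl{H}=\cl{G}$, $\cl{Y}=\cl{F}$) gives $\gamma\cdot\pi=\pi$.

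\emph{Part (2).} The implication "$\Rightarrow$" is immediate from Part (1): if $f=h\circ\pi$ then $\gamma\cdot f=h\circ(\gamma\cdot\pi)=h\circ\pi=f$. For "$\Leftarrow$", assume $\gamma\cdot f=f$, equivalently $f\circ\rho_\gamma=f$, for all $\gamma$. As $\pi$ is surjective it is an epimorphism in the topos of $v$-sheaves, so it is the coequalizer of $\mrm{pr}_1,\mrm{pr}_2\colon\cl{F}\times_{\cl{G}}\cl{F}\rightrightarrows\cl{F}$; hence $f$ factors through $\pi$ — uniquely, as $\pi$ is epic — precisely when $f\circ\mrm{pr}_1=f\circ\mrm{pr}_2$ on $\cl{F}\times_{\cl{G}}\cl{F}$. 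On a geometric point $(x_1,x_2)\in(\cl{F}\times_{\cl{G}}\cl{F})(C,C^+)$ we have $\pi(x_1)=\pi(x_2)$, so $x_2=\rho_\gamma(x_1)$ for some $\gamma\in\Gamma$ by the geometric quotient hypothesis, whence $f(x_2)=(f\circ\rho_\gamma)(x_1)=f(x_1)$. So the two composites agree on every geometric point of $\cl{F}\times_{\cl{G}}\cl{F}$, and the fact above (with this sheaf as $\cl{Y}$) forces them equal; therefore $f$ factors through $\cl{G}$.

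\emph{Main obstacle.} The only real content is the geometric-point determination fact above — it is the sole place the quasiseparatedness hypotheses on $\cl{G}$ (resp.\ $\cl{H}$) enter, and it packages together two standard inputs: quasiseparatedness makes an equalizer a quasicompact monomorphism, and quasicompact morphisms together with the abundance of geometric points let one detect an isomorphism on underlying topological spaces. After that both parts are mechanical. The points needing care are bookkeeping: the convention for the induced $\Gamma$-action on Hom-sets (immaterial here, since $\rho_\gamma^{\pm1}(x)$ always lies in the $\Gamma$-orbit of $x$), and the fact that $\cl{F}\times_{\cl{G}}\cl{F}$ need be neither small nor quasicompact — which is why I would state the determination fact for an arbitrary source $\cl{Y}$, with its proof reduced to affinoid perfectoid test objects.
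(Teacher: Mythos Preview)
Your proof is correct and follows essentially the same route as the paper's: both reduce (1) and (2) to the fact that two maps into a quasiseparated target agree once they agree on geometric points (the paper cites \cite[Lemma 12.11]{Sch17} for this), and both prove the converse in (2) by checking $f\circ\mrm{pr}_1=f\circ\mrm{pr}_2$ on $\cl{F}\times_{\cl{G}}\cl{F}$ at geometric points. Your write-up is more explicit about why the determination fact holds and about the coequalizer property of the surjection $\pi$; incidentally, your worry that $\cl{F}\times_{\cl{G}}\cl{F}$ might fail to be small is unnecessary here (fiber products of small $v$-sheaves are small), though your workaround via affinoid perfectoid covers is harmless.
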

\begin{proof}
    For (1), we can check the equality $\gamma\cdot\pi = \pi$ at each geometric point of $X$ by \Cref{lem:eqqs}, so the claim follows. 
    For (2), it is enough to show that $f$ factors through $Y$ if $\gamma\cdot f=f$ for every $\gamma\in \Gamma$.
    It is enough to check the equality of two maps $f\circ \mrm{pr}_i\colon X\times_{Y} X\rightrightarrows Z$ ($i=1,2$).
    Again, we can check the equality at each geometric point by \Cref{lem:eqqs}. 
    Since any geometric point of $X\times_{Y} X$ is of the form $(x,\gamma\cdot x)$ for some $\gamma\in \Gamma$, we get the claim. 
\end{proof}

The main source of geometric quotients by finite groups is as follows.  

\begin{prop} \label{prop:finquot}
    Let $R\to S$ be an injective adic homomorphism of complete adic rings. Suppose that a finite group $\Gamma$ acts on $S$ so that $R=S^\Gamma$. Then, $\Spd(S)\to \Spd(R)$ is a geometric quotient by $\Gamma$. 
\end{prop}
\begin{proof}
    Since $R=S^\Gamma$, $R\to S$ is injective and integral. Thus, $\pi\colon \Spd(S)\to \Spd(R)$ is surjective by \Cref{lem:mapSpd}. Take a geometric point $x\in \Spd(S)(C,C^+)$ with $C$ an algebraically closed perfectoid field with an open and bounded valuation subring $C^+\subset C$. It corresponds to an untilt $(C^\sharp, \iota)$ and a continuous homomorphism $S\to C^{+\sharp}$. Set $\pi^\sch\colon \Spec(S)\to \Spec(R)$ and let $x_{C^\sharp}\colon \Spec(C^\sharp)\to \Spec(S)$ be the geometric point corresponding to $R\to C^{\sharp}$. Since $[\Spec(S)/\Gamma]\to \Spec(R)$ is a coarse moduli space (see \cite[Theorem 3.1]{Con05}), $(\pi^\sch)^{-1}(\pi^\sch(x_{C^\sharp}))$ is the $\Gamma$-orbit of $x_{C^\sharp}$. Thus, for every $y\in \pi^{-1}(\pi(x))$, we have $y_{C^\sharp}=\gamma\cdot x_{C^\sharp}$ for some $\gamma\in \Gamma$, and $y=\gamma\cdot x$ in $\pi^{-1}(\pi(x))$ for such $\gamma$. 
\end{proof}

The condition in this example can be slightly relaxed in the following way. A merit of using the following condition is the compatibility with base change. 

\begin{lem} \label{lem:invanalogue}
    Let $R$ be a ring and let $S$ be an $R$-algebra with a group action of a finite group $\Gamma$ over $R$. The following conditions are equivalent.
    \begin{enumerate}
        \item For every $R$-algebra $A$ and every element $a\in A\otimes_R S$, $\prod_{\gamma \in \Gamma} \gamma\cdot a $ lies in the image of $A\to A\otimes_R S$. 
        \item For every element $a\in S[T]$, $\prod_{\gamma \in \Gamma} \gamma\cdot a $ lies in the image of $R[T] \to S[T]$. 
    \end{enumerate}
    Moreover, this condition is satisfied if $R=S^\Gamma$. 
\end{lem}
\begin{proof}
    We show $(2) \Rightarrow (1)$. Let $A$ be an arbitrary $R$-algebra and let $a\in A\otimes_R S$. We may write $a=\sum_{1\leq i \leq n} a_i \otimes s_i$ with $a_i\in A$ and $s_i\in S$. Then, $a$ is equal to the image of $\sum_{1\leq i \leq n} T_i \otimes s_i$ through $S[T_1,\ldots,T_n] \to A\otimes_R S$ sending $T_i$ to $a_i$. Thus, we may assume that $A=R[T_1,\ldots T_n]$ and $a=\sum_{1\leq i \leq n} T_i \otimes s_i$. We like to show that every coefficient of $\prod_{\gamma \in \Gamma} \gamma\cdot a$ lies in the image of $R\to S$. Take a positive integer $N > \# \Gamma$ and let $a'\in S[T]$ be the image of $a$ through $S[T_1,\ldots, T_n]\to S[T]$ sending $T_i$ to $T^{N^i}$. Then, every coefficient of $\prod_{\gamma \in \Gamma} \gamma\cdot a$ lies in the image of $R\to S$ if and only if every coefficient of $\prod_{\gamma \in \Gamma} \gamma\cdot a'$ lies in the image of $R\to S$. The latter follows from the assumption (2).  

    Since every coefficient of $\prod_{\gamma \in \Gamma} \gamma\cdot a $ is stable under the action of $\Gamma$ for every $a\in S[T]$, the condition (2) is satisfied when $R=S^\Gamma$.
\end{proof}

As we show in the following, the condition in \Cref{lem:invanalogue} is transitive and stable under restricting to subalgebras.  

\begin{lem} \label{lem:invtrans}
    Let $R\to M \to S$ be a homomorphism of rings and let $\Gamma$ be a finite group acting on $S$ over $R$. Let $H\subset \Gamma$ be a normal subgroup such that $H$ acts on $S$ over $M$ and there is an action of $\Gamma/H$ on $M$ over $R$ that is compatible with the action of $\Gamma$ on $S$. If the action of $\Gamma/H$ on $M$ over $R$ and the action of $H$ on $S$ over $M$ satisfy the condition in \Cref{lem:invanalogue}, the action of $\Gamma$ on $S$ over $R$ satisfies that condition. 
\end{lem}
\begin{proof}
    It is enough to check the condition (2) in \Cref{lem:invanalogue}. Let $a\in S[T]$. Since the action of $H$ on $S$ over $M$ satisfies that condition, $\prod_{\gamma \in H} \gamma\cdot a$ is in the image of $M[T]\to S[T]$. Take an element $b\in M[T]$ mapping to $\prod_{\gamma \in H} \gamma\cdot a$ in $S[T]$. Then, $\prod_{\gamma \in \Gamma} \gamma\cdot a$ is the image of $\prod_{\gamma \in \Gamma/H} \gamma\cdot b$ in $S[T]$, so it lies in the image of $R[T]\to S[T]$ since the action of $\Gamma/H$ on $M$ over $R$ satisfies the condition (2) in \Cref{lem:invanalogue}. 
\end{proof}

\begin{lem} \label{lem:invsubalg}
    Let $f\colon R\to S$ be a homomorphism of rings and let $\Gamma$ be a finite group acting on $S$ over $R$. Let $S'\subset S$ be a subring stable under $\Gamma$ and let $R'=f^{-1}(S')$. If the action of $\Gamma$ on $S$ over $R$ satisfies the condition in \Cref{lem:invanalogue}, the action of $\Gamma$ on $S'$ over $R'$ also satisfies that condition. 
\end{lem}
\begin{proof}
    For every element $a\in S'[T]$, $\prod_{\gamma \in \Gamma} \gamma\cdot a $ lies in the intersection of the image of $R[T]\to S[T]$ with $S'[T]$. That intersection is equal to the image of $R'[T]\to S'[T]$ as $R'=f^{-1}(S')$. 
\end{proof}

\begin{prop} \label{prop:affquot}
    Let $R$ be an adic ring with an ideal of definition $p\in I\subset R$. Let $S$ be an $I$-adic $R$-algebra with an action of a finite group $\Gamma$ over $R$. If $R\to S$ is injective and satisfies the condition in \Cref{lem:invanalogue}, $\Spd(S)\to \Spd(R)$ is a geometric quotient by $\Gamma$. 
\end{prop}
\begin{proof}
    By the condition in \Cref{lem:invanalogue}, for every element $a\in S$, $\prod_{\gamma\in \Gamma}(T-\gamma\cdot a)$ is a polynomial with coefficients in $R$. Thus, $S$ is integral over $R$ and $\Spd(S)\to \Spd(R)$ is surjective by \Cref{lem:mapSpd}. Let $\pi^\sch\colon \Spec(S) \to \Spec(R)$. As in the proof of \Cref{prop:finquot}, it suffices to show that for every geometric point $x\colon \Spec(C) \to \Spec(S)$ with $C$ an algebraically closed field, $(\pi^\sch)^{-1}(\pi^\sch(x))$ is equal to the $\Gamma$-orbit of $x$. Let $S_C = S\otimes_{R, \pi^{\sch}(x)^*} C$. Since $R\to S$ is injective, $\pi^\sch$ is surjective and $S_C\neq 0$. By the condition in \Cref{lem:invanalogue}, for every $a\in S_C$, we have $\prod_{\gamma \in \Gamma} \gamma\cdot a \in C$. Then, we see that $\Spec(S_C)^\Gamma\to \Spec(C)$ is bijective. Moreover, $C\to (S_C)^\Gamma$ is integral and $C$ is algebraically closed, so the reduction of $(S_C)^\Gamma$ is isomorphic to $C$. Since $[\Spec(S_C)/\Gamma] \to \Spec(S_C)^\Gamma$ is a coarse moduli space, $(\pi^\sch)^{-1}(\pi^\sch(x))$ is equal to the $\Gamma$-orbit of $x$. 
\end{proof}

A typical way of constructing a geometric quotient by a profinite group is as follows. 

\begin{prop}\label{prop:limquot}
    Let $\{\Gamma_i\}_{i\geq 0}$ be a sequence of groups with surjective transition maps $\Gamma_{i+1}\twoheadrightarrow \Gamma_i$ such that $\Gamma_0=\{1\}$. 
    Let $\{X_i\}_{i\geq 0}$ be a sequence of $v$-sheaves with transition maps $X_{i+1}\to X_i$ such that each $X_i$ is equipped with a $\Gamma_i$-action compatible with transition maps. 
    Let $\Gamma_\infty=\lim\limits_{i\geq 0} \Gamma_i$ and let $X_\infty=\lim\limits_{i\geq 0} X_i$. 
    Let $\Gamma_{i+1}^{i}=\mrm{Ker}(\Gamma_{i+1}\twoheadrightarrow \Gamma_i)$. 
   If $X_{i+1}\to X_i$ is a geometric quotient by $\Gamma_{i+1}^{i}$ for every $i\geq 0$, then $X_\infty\to X_0$ is a geometric quotient by $\Gamma_\infty$. 
\end{prop}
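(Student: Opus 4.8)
The plan is to check the two defining properties of a geometric quotient for the projection $\pi\colon\cl{F}_\infty\to\cl{F}_0$, namely surjectivity and the identification of fibers over geometric points with $\Gamma_\infty$-orbits. I will freely use that $\cl{F}_\infty(T)=\lim_i\cl{F}_i(T)$ for every affinoid perfectoid $T$ (limits of $v$-sheaves are computed sectionwise), that $\Gamma_\infty=\lim_i\Gamma_i$ surjects onto each $\Gamma_i$ since the tower has surjective transition maps, and that $\pi$ is equivariant for the homomorphism $\Gamma_\infty\to\Gamma_0=\{1\}$, so that the $\Gamma_\infty$-orbit of any geometric point of $\cl{F}_\infty$ is automatically contained in a single $\pi$-fiber.

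For the fiber condition, fix an analytic algebraically closed field $(C,C^+)$ and two geometric points $x=(x_i)_i$ and $y=(y_i)_i$ in $\cl{F}_\infty(C,C^+)$ with $\pi(x)=\pi(y)$, i.e.\ $x_0=y_0$. I would build a compatible system $\gamma=(\gamma_i)_i\in\lim_i\Gamma_i=\Gamma_\infty$ with $\gamma_i\cdot x_i=y_i$ for all $i$, by induction on $i$ starting from $\gamma_0=1$. Given $\gamma_i\in\Gamma_i$ with $\gamma_i\cdot x_i=y_i$, choose any lift $\tilde\gamma\in\Gamma_{i+1}$ of $\gamma_i$; since the transition map $q\colon\cl{F}_{i+1}\to\cl{F}_i$ intertwines the $\Gamma_{i+1}$- and $\Gamma_i$-actions via $\Gamma_{i+1}\to\Gamma_i$, we get $q(\tilde\gamma\cdot x_{i+1})=\gamma_i\cdot x_i=y_i=q(y_{i+1})$; as $q$ is a geometric quotient for the $\Gamma_{i+1}^i$-action, there is $\delta\in\Gamma_{i+1}^i$ with $\delta\tilde\gamma\cdot x_{i+1}=y_{i+1}$, and $\gamma_{i+1}:=\delta\tilde\gamma$ again lifts $\gamma_i$. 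Then $\gamma\cdot x=y$, so the $\pi$-fiber through $x$ equals the $\Gamma_\infty$-orbit of $x$ on $(C,C^+)$-points; a geometric point of the fiber defined only over an extension of $(C,C^+)$ is treated by base change.

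Surjectivity is the only point that is not purely formal. A surjection of $v$-sheaves is detected on geometric points after passage to an analytic algebraically closed extension, so given $x_0\in\cl{F}_0(C_0,C_0^+)$ I would inductively lift along the geometric quotients $\cl{F}_{i+1}\to\cl{F}_i$, producing an increasing tower of analytic algebraically closed fields $(C_0,C_0^+)\subseteq(C_1,C_1^+)\subseteq\cdots$ together with compatible lifts $x_i\in\cl{F}_i(C_i,C_i^+)$ of $x_0$, restricting the earlier $x_j$ to the new field at each step. Letting $(C_\infty,C_\infty^+)$ be the completion of the filtered colimit — still an analytic algebraically closed field — the restrictions of the $x_i$ assemble into an element of $\lim_i\cl{F}_i(C_\infty,C_\infty^+)=\cl{F}_\infty(C_\infty,C_\infty^+)$ mapping to $x_0$. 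The main obstacle, such as it is, lies exactly here: an inverse limit of surjective maps of $v$-sheaves need not be surjective onto its first term on the nose, so one must genuinely produce the lift over a (possibly larger) geometric point via this completed colimit; the construction of $\gamma$ above, by contrast, is a direct diagram chase once the geometric-quotient property of each transition map is invoked.
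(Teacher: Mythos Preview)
Your proof is correct and follows essentially the same approach as the paper. The fiber-condition argument is identical in substance: both inductively correct a lift of $\gamma_i$ to $\Gamma_{i+1}$ by an element of $\Gamma_{i+1}^i$, using the geometric-quotient property of $\cl{F}_{i+1}\to\cl{F}_i$, and then pass to the limit in $\Gamma_\infty$. For surjectivity the paper simply writes ``since each transition map is surjective, $\pi_\infty$ is surjective'', whereas you spell out the underlying construction via a countable tower of geometric points and its completed colimit; this is only a difference in level of detail, and your caution about inverse limits of surjections is well placed.
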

\begin{proof}
    Let $\pi_\infty\colon X_\infty \to X_0$. Since the $v$-site $\Perf$ is replete (by the same proof as in \cite[Lemma 2.6]{Heu21}) and each transition map is surjective, $\pi_\infty$ is surjective. Let $C$ be an algebraically closed perfectoid field with an open and bounded valuation subring $C^+\subset C$. 
    Let $x\in X_\infty(C,C^+)$ and let $y\in \pi_\infty^{-1}(\pi_\infty(x))$. 
    Let $\pi_\infty^i\colon X_\infty\to X_i$. 
    We construct a convergent sequence $\{\gamma_i\}_{i\geq 0}$ of $\Gamma$ so that
    $\pi_\infty^i(\gamma_j\cdot y)=\pi_\infty^i(x)$ for every $i\leq j$. 
    We do this by induction. 
    If $i=0$, we can take $\gamma_0=1$. 
    Suppose that we construct the $i$-th term $\gamma_i$. 
    Then, $\pi_\infty^{i+1}(\gamma_i\cdot y)$ is an element of $X_{i+1}(C,C^+)$ which maps to 
    $\pi_\infty^i(x)$ through the transition map $X_{i+1}\to X_i$. 
    Thus, there is an element $\delta\in \Gamma_{i+1}^{i}$ such that $\delta\cdot \pi_\infty^{i+1}(\gamma_i\cdot y)=\pi_\infty^{i+1}(x)$.
    We can take $\gamma_{i+1}$ to be the product of a lift of $\delta$ and $\gamma_i$. 
    If we let $\gamma=\lim \gamma_i\in \Gamma$, we have $\gamma\cdot y=x$. 
\end{proof}

Another way to construct geometric quotients is to pass to closed subsheaves. 

\begin{lem} \label{lem:quottosubsh}
    Let $\pi \colon X\to Y$ be a proper map of small $v$-sheaves that is a geometric quotient by a group $\Gamma$. Let $Z\subset X$ be a $\Gamma$-stable closed subsheaf and let $Z_Y \subset Y$ be the image of $Z$ under $\pi$. Then, we have $Z\cong X\times_Y Z_Y$ and $Z\to Z_Y$ is a geometric quotient by $\Gamma$. 
\end{lem}
\begin{proof}
    Since $\pi$ is proper, $Z_Y \subset Y$ is a closed subsheaf. For every geometric point $y$ of $X\times_Y Z_Y$, there is a geometric point $x$ of $Z$ such that $\pi(x)=\pi(y)$. Since $\pi$ is a geometric quotient by $\Gamma$, there is some $\gamma \in \Gamma$ such that $y=\gamma\cdot x$. Since $Z$ is stable under $\Gamma$, $y$ is a geometric point of $Z$. Thus, we have $Z=X\times_Y Z_Y$ as closed subsheaves of $X$. By \Cref{lem:bcquot}, $Z\to Z_Y$ is a geometric quotient by $\Gamma$. 
\end{proof}

\section{Perfectoid formal schemes} \label{sec:perfdformal}

The notion of $p$-adic perfectoid formal schemes is introduced in \cite{RC21}. It is constructed from formal spectra of perfectoid rings introduced in \cite{BMS18}. Here, we introduce not necessarily $p$-adic perfectoid formal schemes and establish some basic properties. 

\subsection{$F$-standard ideals}

In this section, we recall $F$-standard ideals introduced in \cite[Section 2.1]{Tak25} and explain the basic properties. 

An ideal $I$ of a perfect ring $A$ (in characteristic $p$) is $F$-standard if $I^p=\Frob(I)$. An $F$-standard ideal $I\subset A$ defines an ideal $[I] \subset W(A)$ consisting of elements $\sum_{n\geq 0} [a_n]p^n$ with $a_n\in I$. For $n=m/p^k \in \bb{Z}[\tfrac{1}{p}]_{\geq 0}$ with $m, k \geq 0$, we set $I^n=\Frob(I^m)^{-k}$. This is independent of the choice of $m$ and $k$ since $I$ is $F$-standard, and $I^n$ is also $F$-standard.

When a perfect ring $A$ is equipped with a linear topology of a finitely generated ideal, we say that $A$ is an adic perfect ring. By \cite[Lemma 2.5]{Tak25}, every adic perfect ring admits an $F$-standard ideal of definition. Note that such an ideal of definition is not necessarily finitely generated.

\begin{lem} 
    Let $A$ be a perfect ring and let $I\subset A$ be an $F$-standard ideal. The ideal $[I]\subset W(A)$ is $p$-adically closed and $W(A)/[I]$ is $p$-torsion free and $p$-adically complete. 
\end{lem}
\begin{proof}
    Since $[I]\cap (p^n) = p^n [I]$ and $[I]$ is $p$-adically complete, we see that $[I]\subset W(A)$ is a closed ideal and $W(A)/[I]$ is $p$-adically complete. Moreover, $W(A)/[I]$ is $p$-torsion free as $[I]\cap (p) = p [I]$. 
\end{proof}

\begin{lem} \label{lem:prodFstd}
    Let $A$ be a perfect ring. For $F$-standard ideals $I, J\subset A$, the ideals $I\cap J, IJ$ are $F$-standard. Moreover, $[I\cap J] = [I] \cap [J]$ and $[IJ]$ is the $p$-adic closure of $[I][J]$ in $W(A)$. 
\end{lem}
\begin{proof}
    Since we have $\Frob(I\cap J) \subset (I\cap J)^p \subset I^p\cap J^p = \Frob(I) \cap \Frob(J)$, $I\cap J$ is $F$-standard. By definition, we have $[I\cap J] = [I] \cap [J]$. On the other hand, since $(IJ)^p = I^pJ^p = \Frob(I) \Frob(J) = \Frob(IJ)$, $IJ$ is $F$-standard. Next, we show $[I][J] \subset [IJ]$. Let $a=\sum\limits_{n= 0}^\infty [a_n] p^n\in I$ and $b=\sum\limits_{n=0}^\infty [b_n] p^n \in J$. Then, $ab=\sum\limits_{n,m=0}^\infty [a_nb_m]p^{n+m}$, so $ab\in [IJ]$ as $a_nb_m \in IJ$. To prove the claim, it is enough to show that $[IJ] = [I][J] + p[IJ]$ since $[IJ]\subset W(A)$ is closed. Let $x=\sum\limits_{n=0}^\infty [a_n] p^n$ be an element of $[IJ]$. Then, $a_0\in IJ$, so we may write $a_0 = \sum\limits_{ 1\leq i \leq m} b_ic_i$ with $b_i\in I$ and $c_i\in J$ for each $1\leq i \leq m$. Then, $x-\sum\limits_{1 \leq i \leq m} [b_m][c_m]\in [IJ] \cap (p) = p [IJ]$, so we obtain $x\in [I][J] + p[IJ]$. 
\end{proof}

From this lemma, we have $(p^n,[I]^m) = (p^n, [I^m])$ for every $n, m\geq 0$. 

\begin{lem} \label{lem:Wittcomp}
    Let $A$ be a perfect ring and let $I\subset A$ be an $F$-standard ideal. Let $A^\wedge$ be the $I$-adic completion of $A$. The $(p,[I])$-adic completion of $W(A)$ is isomorphic to $W(A^\wedge)$. In particular, $W(A)$ is $(p,[I])$-adically complete if $A$ is $I$-adically complete. 
\end{lem}
\begin{proof}
    It is enough to show that the $[I]$-adic completion of $W_{n}(A)$ is isomorphic to $W_{n}(A^\wedge)$ for every $n\geq 0$. For every $m\geq 0$, there are natural maps $W_{n}(A/I^{p^nm}) \to W_{n}(A)/[I^m] \to W_{n}(A/I^m)$ since we have $\sum [a_n]p^n = (a_0,a_1^p,a_2^{p^2},\ldots)$ in $W(A)$. Thus, $\{W_{n}(A/I^m) \}_{m\geq 0}$ and $\{W_{n}(A)/[I^m] \}_{m\geq 0}$ are isomorphic as pro-systems. Thus, we have $\lim\limits_{m\geq 0} W_{n}(A)/[I]^m = \lim\limits_{m\geq 0} W_{n}(A/I^m) = W_{n}(A^\wedge)$. 
\end{proof}

\subsection{Adic perfectoid rings} \label{ssc:adicperfd}

In this section, we introduce affine building blocks of perfectoid formal schemes, which we call complete adic perfectoid rings. This notion is equivalent to perfectoid rings in the sense of Gabber-Romero (see \cite[Definition 16.3.1]{GR18}), but we use another term in order to clarify the terminology.

\begin{defi}
    An adic ring $R$ on which $p$ is topologically nilpotent is said to be an adic perfectoid ring if the underlying $p$-adic ring $R$ is a perfectoid ring as in \cite[Definition 3.5]{BMS18}. Moreover, if $R$ is a complete adic ring, we say that $R$ is a complete adic perfectoid ring. 
\end{defi}

When we simply say that $R$ is a perfectoid ring, $R$ is endowed with the $p$-adic topology. We will consider tilting from adic perfectoid rings to adic perfect rings. For a perfectoid ring $R$ and $f\in R^\flat$, the image of the Teichm\"{u}ller lift $[f]\in W(R^\flat)$ in $R$ is denoted by $f^\sharp$. The map $W(R^\flat)\to R$ is denoted by $\theta_R$ and a generator of $\Ker(\theta_R)$ is denoted by $\xi_R$. This element is distinguished in the sense of \cite[Lemma 2.33]{BS22}. The image of $\xi_R$ along $W(R^\flat)\to R^\flat$ is denoted by $\xi_{R,0}$. 

\begin{lem}
    Let $R$ be an adic perfectoid ring. There is an ideal of definition of $R$ generated by $p$ and $f_1^\sharp, \ldots, f_n^\sharp$ for some $f_1,\ldots,f_n\in R^\flat$. Moreover, the adic topology on $R^\flat$ generated by $\xi_{R,0}, f_1,\ldots,f_n$ is independent of the choice of $f_1,\ldots,f_n$. 
\end{lem}
\begin{proof}
    Let $I$ be a finitely generated ideal of definition of $R$ such that $p\in I$. Since the image of the map $f\mapsto f^\sharp$ is $p$-adically dense in $R$, we can take a set of generators of $I$ consisting of $p$ and $f_1^\sharp, \ldots, f_n^\sharp$. It is enough to show that the adic topology on $R^\flat$ generated by $\xi_{R,0}, f_1, \ldots, f_n$ is independent of the choice of $I$ and $f_1,\ldots, f_n$. 

    Let $I^\flat = (\xi_{R,0}, f_1,\ldots, f_n)$. The ideal $I^\flat$ is independent of the choice of $f_1,\ldots,f_n$ since $I^\flat$ is the inverse image of the ideal $I/pR$ along $R^\flat \to R^\flat/\xi_{R,0}R^\flat \cong R/pR$. From this description, we see that $J^\flat \subset I^\flat$ for any finitely generated ideal of definition $J$ of $R$ such that $p\in J \subset I$. Let $J$ be another finitely generated ideal of definition of $R$ such that $p\in J$. For sufficiently large $N\geq 0$, we have $(p)+J^N \subset I$ and $(p)+I^N \subset J$, so it implies that $(\xi_{R,0})+J^{\flat, N } \subset I^\flat$ and $(\xi_{R,0})+I^{\flat, N } \subset J^\flat$. Thus, the two adic topologies on $R^\flat$ given by $I^\flat$ and $J^\flat$ are equal. 
\end{proof}

Let $I$ be an ideal of $R$ such that $p\in I$. As in the above proof, the inverse  image of the ideal $I/pR$ along $R^\flat \to R^\flat/\xi_{R,0}R^\flat \cong R/pR$ is denoted by $I^\flat$. Note that since $\xi_{R}$ is distinguished, $p$ and $\xi_{R,0}^\sharp$ generate the same ideal in $R$, so $I$ is generated by $\xi_{R,0}^\sharp, f_1^\sharp,\ldots,f_n^\sharp$. For an ideal $J\subset R^\flat$, the ideal of $R$ generated by $f^\sharp$ for all $f\in J$ is denoted by $J^\sharp$. The above argument shows that for an ideal $p\in I \subset R$, we have $I=(I^\flat)^\sharp$. 

\begin{defi}
    Let $R$ be an adic perfectoid ring and let $I$ be a finitely generated ideal of definition of $R$ such that $p\in I$. The adic perfect ring $R^\flat$ with the $I^\flat$-adic topology is said to be the tilting of $R$, which is independent of the choice of $I$. 
\end{defi}

\begin{lem} \label{lem:Jsharp}
    Let $R$ be an adic perfectoid ring and let $J\subset R^\flat$ be an open $F$-standard ideal. The ideal $J^\sharp$ is equal to the image of $[J]\subset W(R^\flat)$ under $\theta_R$. 
\end{lem}
\begin{proof}
    Since $J$ is open, $\xi_{R,0}^n\in J$ for some $n\geq 0$, so we have $p^n \in J^\sharp$ since $\xi_R$ is distinguished. Then, the claim follows from the fact that $[J]\subset W(R^\flat)$ is the $p$-adic closure of the ideal generated by $[f]$ for all $f\in J$. 
\end{proof}

We will prove that the completeness of adic perfectoid rings is preserved under tilting.

\begin{lem}\label{lem:mulxi}
    Let $A$ be a perfect ring and let $\xi \in W(A)$ be a distinguished element. Let $\xi_0$ be the image of $\xi$ along $W(A)\to A$ and let $I\subset A$ be an $F$-standard ideal such that $\xi_0 \in I$. If we endow $W(A)$ with the $(p,[I])$-adic topology, the map $W(A)\to W(A)$ given by the multiplication by $\xi$ is a homeomorphism onto the image. 
\end{lem}
\begin{proof}
    It is easy to see that the multiplication by $\xi$ is continuous. We show that it is a homeomorphism onto its image. 

    Let $n\geq 1$ be an integer. We show that if $x\in W(A)$ satisfies $\xi x \in (p^n, [I^n])$, we have $x\in (p^s, [I^t])$ for every $s,t\geq 0$ with $s+t < n$. Suppose that $x$ is not contained in $(p^s, [I^t])$ and let $x=\sum_{m\geq 0} [a_m]p^m$. Then, there exists $0\leq k < s$ such that $a_k \notin I^t$. Take the minimal $k$ such that $a_k \notin I^t$. Then, we are in the situation $(\ast)$ that $a_0,\ldots, a_{k-1} \in I^\ell$ and $a_k \notin I^\ell$, but $\xi x \in (p^{k+2}, [I^\ell])$ for $k \leq s-1 \leq n-2$ and $\ell = t \geq 0$. 

    Suppose that we are in the above situation for some $k$ and $\ell\geq 1$. We may suppose that  $a_k \in I^{\ell-1}$. Let $\xi=\sum_{m\geq 0} [\xi_m]p^m$. Since $\xi$ is distinguished, $\xi_1$ is a unit and we may assume that $\xi_1=1$. Then, $\xi x$ is congruent to $\sum_{m_1+m_2 \leq k+1} [a_{m_1} \xi_{m_2}] p^{m_1+m_2}$ modulo $p^{k+2}$. It lies in $(p^{k+2}, [I^\ell])$, so we have $[\xi_0 a_{k}] p^{k} + ([\xi_0 a_{k+1}] + [a_{k}]) p^{k+1} \in (p^{k+2}, [I^\ell])$. Then, we have $\xi_0 a_k, \xi_0 a_{k+1}+a_k \in I^\ell$. Since $\xi_0\in I$ and $a_k\notin I^\ell$, we have $a_{k+1} \notin I^{\ell -1}$. Thus, if $k+1 \leq n-2$, the hypothesis $(\ast)$ holds for $k+1$ and $\ell-1$. 

    If we continue this procedure until $\ell=0$, we see that the hypothesis $(\ast)$ holds for some $k\leq s+t-1\leq n-2$ and $\ell=0$. This is a contradiction because $a_k \in A = I^0$. Thus, we see that $x\in (p^s, [I^t])$ and we obtain the claim. 
\end{proof}

\begin{prop} \label{prop:completeW}
    Let $R$ be a perfectoid ring and let $I\subset R$ be an ideal such that $p\in I$ and $I^\flat \subset R^\flat$ is $F$-standard. Then, $W(R^\flat)$ is $(p,[I^\flat])$-adically complete if and only if $R$ is $I$-adically complete. 
\end{prop}
\begin{proof}
    First, suppose that $W(R^\flat)$ is $(p,[I^\flat])$-adically complete. By \Cref{lem:mulxi}, $\Ker(\theta_R)$ is a closed ideal, so the quotient $R=W(R^\flat)/\Ker(\theta_R)$ is complete with respect to the quotient topology. The claim follows since $I$ is the image of $(p,[I^\flat])$ along $\theta_R$. 
    
    Next, suppose that $R$ is $I$-adically complete. First, we show that $W(R^\flat)$ is separated. Suppose that we have $a \in \bigcap_{n\geq 0} (p^n, [I^{\flat, n}])$. Then, $\theta_R(a) \in \bigcap_{n\geq 0} I^n$, so $\theta_R(a)=0$. If we write $a=\xi a'$, then we also have $a' \in \bigcap_{n\geq 0} (p^n, [I^{\flat, n}])$ by \Cref{lem:mulxi}. Thus, we have $\bigcap_{n\geq 0} (p^n, [I^{\flat, n}]) \subset \xi^m W(R^\flat)$ for every $m\geq 0$. Since $W(R^\flat)$ is $\xi$-adically complete by \Cref{lem:Wittcomp}, we have $\bigcap_{n\geq 0} (p^n, [I^{\flat, n}])=0$. Next, we show that $W(R^\flat)$ is complete. Let $(a_n)_{n\geq 0}$ be a sequence in $W(R^\flat)$ such that $a_{n+1}-a_n \in (p^n, [I^{\flat, n}])$. Since $R$ is $I$-adically complete, $(\theta_R(a_n))$ converges to an element $\ov{b}\in R$. Let $b$ be a lift of $\ov{b}$ in $W(R^\flat)$ and let $a'_n=a_n-b$. Then, $\theta_R(a'_n) \in (p^n,(I^{\flat, n})^\sharp)$, so by replacing $a_n$ with an element of $a_n + (p^n, [I^{\flat, n}])$, we may assume that $a'_n \in \Ker(\theta_R)$. By \Cref{lem:mulxi}, the sequence $(a'_n/\xi)$ is also a Cauchy sequence, so we may repeat this procedure to $(a'_n/\xi)$. After all, we see that there is a sequence $(b_m)_{m\geq 0}$ such that for every $k\geq 0$, $a_n - \sum_{0\leq m <k} b_m \xi^m$ modulo $\xi^k$ is convergent to $0$ in $W(R^\flat)/(\xi^k)$. Since $W(R^\flat)$ is $\xi$-adically complete, $\sum_{m\geq 0} b_m\xi^m$ is a limit of $(a_n)$. 
\end{proof}

\begin{cor} \label{cor:tiltcomplete}
    Let $R$ be an adic perfectoid ring. Then, $R$ is complete if and only if $R^\flat$ is complete. 
\end{cor}
\begin{proof}
    Let $I\subset R$ be an ideal of definition such that $p\in I$. By \Cref{prop:completeW}, $R$ is complete if and only if $W(R^\flat)$ is $(p,[I^\flat])$-adically complete, and $R^\flat$ is complete if and only if $W(R^\flat)$ is $(p,[I^\flat])$-adically complete. Thus, the claim follows. 
\end{proof}

\begin{cor} \label{cor:tiltofcomp}
    Let $R$ be an adic perfectoid ring and let $I\subset R$ be an ideal of definition such that $p\in I$. The $I$-adic completion $R^\wedge$ is a complete adic perfectoid ring. Its tilt $R^{\wedge, \flat}$ is isomorphic to the $I^\flat$-adic completion of $R^\flat$. 
\end{cor}
\begin{proof}
    Let $J_n = (p,[I^\flat])^n \cap (\xi_R)$. The short exact sequence $0\to (\xi_R) \to W(R^\flat) \to R\to 0$ gives rise to a short exact sequence $0\to (\xi_R)/J_n \to W(R^\flat)/(p,[I^\flat])^n \to R/I^n \to 0$. By \Cref{lem:mulxi}, the limit of $(\xi_R)/J_n$ is isomorphic to the $(p,[I^\flat])$-adic completion of $(\xi_R)$. Let $R^{\flat, \wedge}$ be the $I^\flat$-adic completion of $R^\flat$. By taking the limit of the above sequences, we obtain a short exact sequence $0\to \xi_R\cdot W(R^{\flat, \wedge}) \to W(R^{\flat, \wedge}) \to R^\wedge \to 0$ by \Cref{lem:Wittcomp}. As $\xi_R$ is distinguished, we see that $R^\wedge$ is a complete adic perfectoid ring with tilt $R^{\flat, \wedge}$. 
\end{proof}

The tilting equivalence holds in our setting as well. For an adic perfectoid ring $R$, an adic (resp.\ complete adic) perfectoid $R$-algebra refers to an adic (resp.\ complete adic) perfectoid ring $S$ with a continuous homomorphism $R\to S$. 

\begin{prop} \label{prop:conttiltequiv}
    Let $R$ be an adic perfectoid ring. The category of adic (resp.\ complete adic) perfectoid $R$-algebras is equivalent to the category of adic (resp.\ complete adic) perfect $R^\flat$-algebras. 
\end{prop}
\begin{proof}
    Let $R$ and $S$ be adic perfectoid rings. By the tilting equivalence of perfectoid rings (see \cite[Theorem 3.10]{BS22}), it is enough to show that a homomorphism $f\colon R\to S$ is continuous if and only if $f^\flat \colon R^\flat \to S^\flat$ is continuous. Since $R$ and $S$ are $p$-adic, $f$ is continuous if and only if $f/p \colon R/pR \to S/pS$ is continuous. Since $R^\flat$ and $S^\flat$ is $\xi_{R,0}$-adic, $f^\flat$ is continuous if and only if $f/\xi_{R,0}\colon R^\flat/\xi_{R,0} R^\flat \to S^\flat/\xi_{R,0}S^\flat$ is continuous. Thus, the claim follows from the isomorphism $R/pR \cong R^\flat/\xi_{R,0}R^\flat$.  
\end{proof}

For a continuous homomorphism $f\colon R\to S$ of adic perfectoid rings, the homomorphism $R^\flat \to S^\flat$ between the tiltings is denoted by $f^\flat$. 

\begin{lem} \label{lem:etperfd}
    Let $R$ be a complete adic perfectoid ring with an ideal of definition $p\in I \subset R$ and let $S$ be an $I$-adically complete $I$-completely \'{e}tale $R$-algebra. Then, $S$ is a complete adic perfectoid ring. 
\end{lem}
\begin{proof}
    We may assume that $I^\flat$ is $F$-standard. By the topological invariance of \'{e}tale sites, there is a unique \'{e}tale $W(R^\flat)/(p^n, [I^\flat]^n)$-algebra $S'_n$ such that $S'_n \otimes_{W(R^\flat)/(p^n, [I^\flat]^n)} (R/I) \cong S/IS$ for every $n\geq 0$. Then,  $S'_\infty = \lim_{n\geq 0} S'_n$ is $(p,[I^\flat])$-completely \'{e}tale over $W(R^\flat)$ and we have $S \cong S'_\infty/\xi_R S'_\infty$. By \cite[Lemma 2.18]{BS22}, $S'_\infty$ is equipped with the unique $\delta$-structure compatible with that of $W(R^\flat)$. Now, the Frobenius on $S'_\infty$ induces a homomorphism $S'_\infty/(p^n, [I^{\flat,1/p}]^{n}) \to S'_n$ after the base change to $W(R^\flat)/(p^n, [I^\flat]^n)$. It induces a universally homeomorphic \'{e}tale morphism, so it is an isomorphism. Thus, the Frobenius on $S'_\infty$ is an isomorphism, so $S \cong S'_\infty/\xi_R S'_\infty$ is a complete adic perfectoid ring. 
\end{proof}

\subsection{Continuous arc-topology}

In this section, we introduce continuous arc-topology on complete adic perfectoid rings and establish continuous arc-descent following \cite{BM21}, \cite[Section 8.2]{BS22} and \cite{Ito23}. The following continuous version was proposed in \cite[Section 2.2.1]{CS24}.  

\begin{defi}
    An adic homomorphism $R\to S$ of complete adic perfectoid rings is a continuous arc-cover if for every continuous homomorphism $R\to V$ to a valuation ring of rank $1$, there is an extension $V\to W$ of valuation rings of rank $1$ and a continuous homomorphism $S\to W$ lifting the composition $R\to V\to W$. 
\end{defi}

As in \cite[Remark 8.9]{BS22}, a continuous arc-cover can be constructed as a product of valuation rings of rank $1$ as follows. 

\begin{exa}
    Let $R$ be a complete adic perfectoid ring and let $I\subset R$ be a finitely generated ideal of definition. Let $X_R$ be a set of representatives of rank $1$ continuous valuations on $R$ and let $S=\prod_{x\in X_R} V_x$ be the product of $V_x$ with the $I$-adic topology. We can take each $V_x$ so that $V_x$ is $p$-complete and $\Frac(V_x)$ is algebraically closed. Then, $S$ is a complete adic perfectoid ring and $R\to S$ is a continuous arc-cover. 
\end{exa}

As usual, the continuous arc-site satisfies tilting equivalence. 

\begin{lem} \label{lem:tiltarc}
    Let $f\colon R\to S$ be a continuous homomorphism of complete adic perfectoid rings. Then, $f$ is a continuous arc-cover if and only if $f^\flat$ is a continuous arc-cover. 
\end{lem}
\begin{proof}
    Note that a $p$-adically complete valuation ring $V$ with an algebraically closed fraction field is perfectoid and $V^\flat$ is a perfect valuation ring with an algebraically closed fraction field. By \Cref{prop:conttiltequiv}, we see that for every continuous homomorphism $R \to V$, the existence of a lift $S\to W$ to an extension $V\to W$ is equivalent to the existence of a lift $S^\flat\to W^\flat$ to an extension $V^\flat\to W^\flat$. As in \Cref{prop:conttiltequiv}, $f$ is adic if and only if $f^\flat$ is adic.
\end{proof}

The argument of \cite{Ito23} for $\varpi$-complete arc-hyperdescent works in our setting as well. As in \cite[Definition 2.1]{Ito23}, hyperdescent for the continuous arc-topology is referred to as continuous arc-hyperdescent.  

\begin{prop} \label{prop:contarcdescent}
    The functors 
    \[ R \mapsto \Perf(R^\flat), \hspace{5pt} R \mapsto \Perf(W(R^\flat)), \hspace{5pt} R \mapsto \Perf(R) \]
    from the category of complete adic perfectoid rings to the $\infty$-category of small $\infty$-categories satisfy continuous arc-hyperdescent. 
\end{prop}
\begin{proof}
    As in the proof of \cite[Theorem 3.1]{Ito23}, it is enough to show that the bounded analogues of the given functors (where $\Perf$ is replaced by $\Perf_{[a,b]}$ for some $a\leq b$) satisfy arc-descent. 
    
    Let $R$ be a complete adic perfectoid ring. Let $I \subset R$ be a finitely generated ideal of definition such that $p\in I$ and let $f_1,\ldots,f_r\in I^\flat$ be a finite set of generators. Let $R\to S$ be a continuous arc-cover of complete adic perfectoid rings and let $R\to S^\bullet$ be the \v{C}ech nerve of the continuous arc-cover.

    First, we show the continuous arc-descent along $R\to S^\bullet$ of the functor $S^\bullet \mapsto \Perf_{[a,b]}(S^{\bullet,\flat})$. By \cite[Proposition 2.9 (2)]{Ito23}, it is enough to show that the arc-descent holds for the functor $S^\bullet \mapsto \Perf_{[a,b]}(S^{\bullet,\flat}/^{\bb{L}}(f_1^n,f_2^n, \ldots, f_r^n))$ for every $n\geq 0$. Since $R^\flat \to S^\flat$ is a continuous arc-cover by \Cref{lem:tiltarc}, $R^\flat \to S^\flat \times R^\flat[\tfrac{1}{f_1}] \times \cdots \times R^\flat[\tfrac{1}{f_r}]$ is an arc-cover of perfect rings. Thus, the claim follows from \cite[Theorem 3.1]{Ito23}. 

    The continuous arc-descent for $S^\bullet \mapsto \Perf_{[a,b]}(W(S^{\bullet,\flat}))$ follows from the same argument as in \cite[Section 4.1]{Ito23}. The continuous arc-descent for $S^\bullet \mapsto \Perf_{[a,b]}(S^\bullet/^{\bb{L}}(f_1^{\sharp,n},\ldots,f_r^{\sharp,n}))$ also follows from the same argument for every $n\geq 0$. Thus, the continuous arc-descent of the functor $S^\bullet \mapsto \Perf_{[a,b]}(S^\bullet)$ follows from \cite[Proposition 2.9 (2)]{Ito23}. 
\end{proof}

The following corollary follows formally from the continuous arc-descent of $R\mapsto \Perf(R)$ by looking at the space of endomorphisms of $R\in \Perf(R)$. 

\begin{cor} \label{cor:arcacyc}
    Let $R$ be a complete adic perfectoid ring with an ideal of definition $I\subset R$. Let $R\to S$ be a continuous arc-cover of complete adic perfectoid rings. The complex $0\to R \to S \to S\widehat{\otimes}_R S \to \cdots$ is acyclic. Here, $\widehat{\otimes}_R$ denotes the $I$-adically completed tensor product. 
\end{cor}

However, this descent is sometimes insufficient as it ignores the descent of topologies. We will show that the above descent holds as topological rings. 

\begin{defi}
    Let $R$ be a complete adic perfectoid ring. For an ideal of definition $I\subset R$ with $p\in I$, let $\vert \cdot \vert_I$ be the norm such that $\lvert x \rvert_I = p^{-n}$ with $n = \sup\{m \geq 0 \vert x\in I^m\}$, and let $\cl{M}_I(R)$ be the set of multiplicative norms on $R$ bounded by $\vert \cdot \vert_I$.
\end{defi}

\begin{lem} \label{lem:valuation_to_perfectoid_field}
    For every $\vert \cdot \vert \in \cl{M}_I(R)$, there is a continuous map $R \to V$ to a rank $1$ perfectoid valuation ring $V$ such that $\vert \cdot \vert$ is the pullback of a norm $\vert \cdot \vert_V$ on $V$. For any two such choices $R \to V_1$ and $R \to V_2$, there is a common refinement $R \to V_3$, in a sense that we have the following commutative diagram:
    \begin{center}
        \begin{tikzcd}
            & V_1 \ar[rd] & \\
            R \ar[ru] \ar[rd] \ar[rr] & & V_3. \\
            & V_2 \ar[ru] & 
        \end{tikzcd}
    \end{center}
\end{lem}
\begin{proof}
    Let $\mfr{p} = \supp \vert \cdot \vert \subset R$ and let $K = \Frac(R / \mfr{p})$. Then, $\vert \cdot \vert$ uniquely extends to a rank $1$ valuation on $K$. When the valuation on $K$ is trivial, $K$ is in characteristic $p$ and the map 
    \[
        R \to K \to (K\llbracket t \rrbracket)^{\perf, \wedge} = V
    \]
    satisfies the claim. When the valuation on $K$ is nontrivial, $V_0 = \vert \cdot \vert^{-1}([0,1]) \subset K$ is a rank $1$ valuation ring. We define $V$ as the completed perfection of $V_0$ when $V_0$ is in characteristic $p$, and as the completed algebraic closure of $V_0$ when $V_0$ is in mixed characteristic. Then, $R \to V$ satisfies the desired condition. 

    For the second claim, we apply \cite[Proposition 3.2 (1)]{Sch26} to $S = V_1 \otimes_R V_2$. Then, we can take a rank $1$ valuation $\vert \cdot \vert_S$ on $S$ extending the valuations on $V_1$ and $V_2$. By the previous argument for the first part, we get a continuous map $S \to V_3$ that realizes $\vert \cdot \vert_S$ as the restriction of the norm on $V_3$. 
\end{proof}

\begin{lem} \label{lem:tilting_equivalence_M(R)}
    Let $R$ be a complete adic perfectoid ring and let $I\subset R$ be an ideal of definition with $p\in I$. There is a homeomorphism
    \[
        \cl{M}_I(R) \cong \cl{M}_{I^\flat}(R^\flat) ,\quad
        \vert \cdot \vert \mapsto \vert \cdot \vert^\flat
    \]
    such that $\lvert f^\sharp \rvert = \lvert f \rvert^\flat$ for every $f \in R^\flat$. 
\end{lem}
\begin{proof}
    First, we show that the map $\vert \cdot \vert \mapsto \vert \cdot \vert^\flat$ is well-defined. First, since the map $f \mapsto f^\sharp$ is multiplicative, $\vert \cdot \vert^\flat$ is multiplicative. Moreover, for every $f \in I^\flat$, we have
    \[
        \vert f \vert^\flat = \vert f^\sharp \vert \leq p^{-1}, 
    \]
    so $\vert \cdot \vert^\flat$ is bounded by $\vert \cdot \vert_{I^\flat}$. Finally, since $\vert \cdot \vert$ is continuous and non-archimedean, we have
    \[
        \lvert (f+g)^\sharp \rvert = \lim_{n \to \infty} \lvert (f^{1/p^n})^\sharp + (g^{1 / p^n})^\sharp \rvert^{p^n} \leq \max(\lvert f^\sharp \rvert, \lvert g^\sharp \rvert).
    \]
    Thus, $\vert \cdot \vert^\flat$ is a multiplicative norm bounded by $\vert \cdot \vert_{I^\flat}$. 

    Thus, we get a map $\cl{M}_I(R) \to \cl{M}_{I^\flat}(R^\flat)$. It is continuous by construction. Since both spaces are compact and Hausdorff, it is enough to show that the map is bijective. 
    
    First, we show the surjectivity. By \Cref{lem:valuation_to_perfectoid_field}, every element of $\cl{M}_{I^\flat}(R^\flat)$ is obtained as the restriction along a continuous map $R^\flat \to V^\flat$ to a complete perfect rank $1$ valuation ring. By the tilting equivalence, we get a continuous map $R \to V$. Then, $V$ is a perfectoid rank $1$ valuation ring and it can be equipped with a norm $\vert \cdot \vert_V$ such that 
    \[
        \vert f^\sharp \vert_V = \vert f \vert_{V^\flat} ,\quad f \in V^\flat. 
    \]
    The induced norm on $R$ is multiplicative and bounded by $\vert \cdot \vert_I$ because for every $f \in I^\flat$, 
    \[
        \vert f^\sharp \vert_V = \vert f \vert_{V^\flat} \leq p^{-1}, 
    \]
    so $I = (I^\flat)^\sharp$ implies $\vert I \vert_V \subset [0, p^{-1}]$. By construction, $\vert \cdot \vert_V^\flat = \vert \cdot \vert_{V^\flat}$, so the surjectivity follows. 

    For the injectivity, choose two continuous maps
    \[
        R \to V_1 ,\quad R \to V_2
    \]
    to rank $1$ perfectoid valuation rings such that their tilts
    \[
        R^\flat \to V_1^\flat ,\quad R \to V_2^\flat
    \]
    induce the same norm on $R^\flat$. By the second claim of \Cref{lem:valuation_to_perfectoid_field}, we can take a common refinement $R^\flat \to V_3^\flat$ of their tilts. Then, its untilt $R \to V_3$ is a common refinement of the original two maps. Thus, they induce the same norm on $R$, which proves the injectivity. 
\end{proof} 

Let $\vert \cdot \vert_I^\spc$ denote the spectral norm
\[
    \lvert x \rvert^\spc_I = \lim_{n\to \infty} \lvert x^n \rvert_I^{1/n}. 
\]
Then, $\vert \cdot \vert_I^\spc$ is equal to the supremum over $\cl{M}_I(R)$ (see e.g. \cite[Lemma 1.5.22]{Ked19}). 

\begin{lem} \label{lem:preservation_spectral_norms}
    Let $R\to S$ be a continuous arc-cover of complete adic perfectoid rings and let $I \subset R$ be an ideal of definition with $p \in I$. Then, $\lvert x \rvert_I^\spc = \lvert x \rvert_{IS}^\spc$ for every $x \in R$. 
\end{lem}
\begin{proof}
    By \Cref{lem:valuation_to_perfectoid_field}, each $\vert \cdot \vert \in \cl{M}_I(R)$ corresponds to a continuous homomorphism $R\to V$ to a perfectoid rank $1$ valuation ring. Since $R\to S$ is a continuous arc-cover, we may assume by replacing $V$ that it admits a lift $S\to V$. Since the norm induced by $S\to V$ is bounded by $\vert \cdot \vert_{IS}$ if and only if the one induced by $R\to V$ is bounded by $\vert \cdot \vert_{I}$ (as they are equivalent to the condition that the valuations on $I$ are bounded by $p^{-1}$), we get $\lvert f \rvert_I^\spc = \lvert f \rvert_{IS}^\spc$ by taking the supremum over $\cl{M}_I(R)$ and $\cl{M}_{IS}(S)$. 
\end{proof}

\begin{prop} \label{prop:perfduniform}
    Let $R$ be a complete adic perfectoid ring and let $I\subset R$ be an ideal of definition with $p\in I$. The topology defined by $\vert \cdot \vert_I^\spc$ equals the $I$-adic topology on $R$. 
\end{prop}
\begin{proof}
    First, suppose that $R$ is in characteristic $p$. Let $J$ be the minimum $F$-standard ideal containing $I$. Since $\lvert x \rvert^\spc_I = \lim_{n\to \infty} \lvert x^{p^n} \rvert_I^{p^{-n}}$ and $J=\bigcup_{n \geq 0}\Frob^{-n}(I^{p^n})$, we see that for every $m \geq 1$, $x\in J^m$ implies $\lvert x \rvert^\spc_I \leq p^{-m}$, and $\lvert x \rvert^\spc_I \leq p^{-m-1}$ implies $x\in J^m$. Thus, the topology defined by $\vert \cdot \vert_I^\spc$ is equal to the $J$-adic topology on $R$. Thus, the claim follows from \cite[Lemma 2.5]{Tak25}. 

    Next, let $R$ be a complete adic perfectoid ring such that the image of the map $(-)^\sharp \colon R^\flat \to R$ is dense. It is obvious that the $I$-adic topology is finer than or equal to the topology defined by $\vert \cdot \vert_I^\spc$. We show the converse direction. Let $m\geq 1$ and let $J_m\subset R$ be the ideal consisting of elements $x\in R$ with $\lvert x \rvert^\spc_{I} \leq p^{-m}$. Since $p^m \in J_m$, it is enough to determine the set of $f\in R^\flat$ with $f^\sharp \in J_m$. Recall that $\vert \cdot \vert_I^\spc$ equals the supremum over $\cl{M}_I(R)$. Then, by \Cref{lem:tilting_equivalence_M(R)}, $f^\sharp \in J_m$ if and only if $\lvert f \rvert_{I^\flat}^\spc \leq p^{-m}$. Let $J_m^\flat \subset R^\flat$ be the ideal consisting of elements $x\in R^\flat$ with $\lvert x \rvert_{I^\flat}^\spc \leq p^{-m}$. Since $\vert \cdot \vert_{I^\flat}^\spc$ is power-multiplicative, $J_m^\flat$ is $F$-standard. The above argument shows that $J_m = (p^m, (J_m^\flat)^\sharp)$ and the argument in characteristic $p$ shows that the linear topology given by $\{J_m^\flat \}$ is equal to the $I^\flat$-adic topology. Since $I = (p,(I^\flat)^\sharp)$, we see that the linear topology given by $\{J_m\}_{m \geq 0}$ is equal to the $I$-adic topology. 

    Now, let $R$ be an arbitrary complete adic perfectoid ring. By Andr\'{e}'s flatness lemma (see \cite[Theorem 7.14]{BS22}), there is a $p$-completely faithfully flat map $R \to S$ of perfectoid rings such that the map $(-)^\sharp \colon S^\flat \to S$ is surjective. Let $S^\wedge$ be the $I$-adic completion of $S$. By \Cref{cor:tiltofcomp}, $S^\wedge$ is a complete adic perfectoid ring, and $R \to S^\wedge$ is $I$-completely faithfully flat. In particular, $R \to S^\wedge$ is a continuous arc-cover (see \cite[p.73 (2)]{CS24}) and the restriction of the $IS^\wedge$-adic topology to $R$ is equal to the $I$-adic topology. By \Cref{lem:preservation_spectral_norms}, the restriction of the topology given by $\vert \cdot \vert_{IS}^\spc$ to $R$ is equal to the topology given by $\vert \cdot \vert_{I}^\spc$. Since the claim holds for $S^\wedge$ by the previous argument, we get the claim for $R$ by restricting along $R \to S^\wedge$. 
\end{proof}

\begin{cor} \label{cor:arcsubsp}
    Let $R\to S$ be a continuous arc-cover of complete adic perfectoid rings. The topology on $R$ equals the subspace topology induced by the topology on $S$. 
\end{cor}
\begin{proof}
    The claim follows from \Cref{lem:preservation_spectral_norms} and \Cref{prop:perfduniform}. 
\end{proof}

\subsection{Perfectoid formal schemes}


\begin{defi}
    A perfectoid formal scheme is a formal scheme Zariski locally isomorphic to the formal spectrum of a complete adic perfectoid ring. A perfectoid formal scheme in characteristic $p$ is called a perfect formal scheme. 
\end{defi}

\begin{prop}
    A formal scheme $\mfr{X}$ is perfectoid if and only if every affine open formal subscheme of $\mfr{X}$ is the formal spectrum of a complete adic perfectoid ring. 
\end{prop}

\begin{proof}
    It is enough to show that if $\mfr{X}=\Spf(R)$ with $R$ a complete adic ring is perfectoid, then $R$ is a complete adic perfectoid ring. Let $I\subset R$ be a finitely generated ideal of definition such that $p\in I$. Since $\Spf(R)$ is perfectoid, $\Spec(R/I)$ is Zariski locally semiperfect, so $R/I$ is semiperfect. Note that a ring $A$ in characteristic $p$ is semiperfect if the Frobenius of $A$ is surjective, that is if the Frobenius of $\Spec(A)$ is a closed immersion, so this condition is Zariski local. 
    
    Let $R^\flat=\lim_{x\mapsto x^p} (R/I)$ and let $I^\flat=\Ker(R^\flat \twoheadrightarrow R/I)$. Then, $R^\flat$ is perfect and complete with respect to the linear topology given by $\{\Frob^n(I^\flat) \}_{n\geq 0}$. The quotient $R^\flat/\Frob(I^\flat) \to R^\flat/I^\flat$ is isomorphic to the Frobenius map $R/I \to R/I$. 
    
    For each affine open formal subscheme $\Spf(S) \hookrightarrow \Spf(R)$ with $S$ a complete adic perfectoid ring, we have $S/IS \cong S^\flat/(IS)^\flat$ and the kernel of the Frobenius map on $S^\flat/(IS)^\flat$ is finitely generated since $(IS)^\flat$ is finitely generated. Thus, $\Ker(\Frob\colon R/I \to R/I)$ is Zariski locally finitely generated, so it is finitely generated. In particular, there is a finitely generated ideal $J \subset R^\flat$ such that $I^\flat=J+\Frob(I^\flat)$. Then, for every $x \in I^\flat$, there is a sequence $y_1, y_2,\ldots \in J$ such that 
    \[
        x - (y_1 + y_2^p + \cdots + y_k^{p^{k-1}}) \in \Frob^k(I^\flat). 
    \]
    Since $R^\flat$ is complete with respect to the linear topology given by $\{\Frob^n(I^\flat) \}_{n\geq 0}$, we have $x = \sum_{i \geq 1} y_i^{p^{i-1}}$ and the sum converges in $J$ since $J$ is finitely generated. Thus, $I^\flat = J$ and $I^\flat$ is finitely generated. In particular, $R^\flat$ is $I^\flat$-adically complete. 

    We can take a finite Zariski open covering
    \[
        \Spf(R) = \bigcup_{\lambda \in \Lambda} \Spf(S_\lambda)
    \]
    so that $S_\lambda$ is a complete adic perfectoid ring and $\Spf(S_\lambda) = D_{f_\lambda^\sharp}(\Spf(R))$ for some $f_\lambda \in R^\flat$. This is possible since $D_{f^\sharp}(\Spf(R)) \cong D_{f}(\Spf(R^\flat))$ under $\lvert \Spf(R) \rvert \cong \lvert \Spf(R^\flat) \rvert$ and the set of $D_{f^\sharp}(\Spf(R))$ forms an open basis of $\lvert \Spf(R) \rvert$. Then, $\Spf(S_\lambda^\flat) = D_{f_\lambda}(\Spf(R^\flat))$, $(IS_\lambda)^\flat=I^\flat S_\lambda^\flat$ and $S_\lambda^\flat$ is a complete $I^\flat$-adic perfect ring. Now, $R^\flat \to \prod_{\lambda\in \Lambda} S_\lambda^\flat$ is a continuous arc-cover of complete adic perfect rings. By \Cref{prop:contarcdescent}, the homomorphism $\Ker(\theta_{S_\lambda}) \to W(S_\lambda^\flat)$ of invertible $W(S_\lambda^\flat)$-modules descends to a homomorphism $K\to W(R^\flat)$ with $K$ an invertible $W(R^\flat)$-module.   

    Now, we have a surjection $W(R^\flat) \twoheadrightarrow R$. The following diagram commutes for each $\lambda\in \Lambda$ by the functoriality of the construction. 
    \begin{center}
        \begin{tikzcd}
            K \ar[r]\ar[d] & W(R^\flat) \ar[r] \ar[d] & R \ar[d] \\
            \Ker(\theta_{S_\lambda})\ar[r] & W(S_\lambda^\flat) \ar[r] & S_\lambda 
        \end{tikzcd}
    \end{center}
    Now, $K\to \prod_{\lambda\in \Lambda} \Ker(\theta_{S_\lambda})$ and $W(R^\flat) \to \prod_{\lambda\in \Lambda} W(S_\lambda^\flat)$ are injective by \Cref{prop:contarcdescent}, and $R\to \prod_{\lambda\in \Lambda} S_\lambda$ is also injective by the sheaf condition. Moreover, the images of these maps can be described by gluing conditions. Thus, we see that $K$ maps isomorphically to $\Ker(W(R^\flat) \to R)$ in $W(R^\flat)$. It is enough to show that $K$ is generated by a distinguished element of $W(R^\flat)$.
    
    We proceed as in \cite[Lemma 3.6]{BS22}. Let $J\subset R^\flat$ be an $F$-standard ideal of definition and let $M=W(R^\flat)/(K^p + \varphi(K)\cdot W(R^\flat)+(p^2,[J]))$. Since $R^\flat \to \prod_{\lambda \in \Lambda} S^\flat_\lambda$ is $I^\flat$-completely faithfully flat, $W(R^\flat) \to \prod_{\lambda \in \Lambda} W(S_\lambda^\flat)$ is $(p,[J])$-completely faithfully flat. In particular, $M\to \prod_{\lambda\in \Lambda} M \otimes_{W(R^\flat)} W(S^\flat)$ is faithfully flat. Since $\Ker(\theta_{S_\lambda})$ is generated by a distinguished element, $p=0$ in $M \otimes_{W(R^\flat)} W(S^\flat)$ for every $\lambda \in \Lambda$ by \cite[Theorem 3.10]{BS22}. Thus, we have $p=0$ in $M$. Let $p = a + \varphi(b)c + p^2d + e$ with $a\in K^p$, $b \in K$, $c,d \in W(R^\flat)$ and $e\in [J]$. We have $p(1 - \delta(b)c - pd) - e \in K^p$. We show that $\delta(b)$ is a unit. If $\delta(b)$ is not a unit, there is a maximal ideal $J \subset \mfr{m}\in R^\flat$ such that $\delta(b) \in (p,[\mfr{m}])$. The map $R^\flat \to R^\flat/\mfr{m}$ factors through $S_\lambda^\flat$ for some $\lambda\in \Lambda$, so the image of $K$ in $W(R^\flat/\mfr{m})$ is generated by $p$. Since $\delta(b)$ maps to $p\cdot W(R^\flat/\mfr{m})$ and $e$ maps to $0$ in $W(R^\flat/\mfr{m})$, we have $p(1-pd)\in (p^2)$ in $W(R^\flat/\mfr{m})$, but this is a contradiction. Thus, $\delta(b)$ is a unit and $b\in K$ is a distinguished element. Then, $\Ker(\theta_{S_\lambda})$ is generated by $b$ for every $\lambda \in \Lambda$, and we see that $K$ is generated by $b$. 
\end{proof}

The above proof also justifies the following construction. 

\begin{defi}
    Let $\mfr{X}$ be a perfectoid formal scheme. The tilting of $\mfr{X}$ is a perfect formal scheme $\mfr{X}^\flat$ such that $\lvert \mfr{X}^\flat \rvert = \lvert \mfr{X} \rvert$ and $\cl{O}_{\mfr{X}^\flat}(U)=\cl{O}_{\mfr{X}}(U)^\flat$ for every affine open subspace $U\subset \lvert \mfr{X} \rvert$. 
\end{defi}

The tilting equivalence naturally extends to the category of perfectoid formal schemes. 

\begin{prop} \label{prop:tiltperfdfmsch}
    Let $R$ be a complete adic perfectoid ring. The category of perfectoid formal schemes over $\Spf(R)$ is equivalent to the category of perfect formal schemes over $\Spf(R^\flat)$. For a perfectoid formal scheme $\mfr{X}$ over $\Spf(R)$ with a tilting $\mfr{X}^\flat$ over $\Spf(R^\flat)$, there is an isomorphism $\mfr{X} \times_{\Spf(R)} \Spec(R/I) \cong \mfr{X}^\flat \times_{\Spf(R^\flat)} \Spec(R^\flat/I^\flat)$ for every ideal of definition $I\subset R$ with $p\in I$. Moreover, there is an isomorphism $\mfr{X}^\diamond \cong (\mfr{X}^\flat)^\diamond$. 
\end{prop}
\begin{proof}
    The first claim follows since the functor $\mfr{X} \mapsto \mfr{X}^\flat$ is an equivalence by \Cref{prop:conttiltequiv}. For the remaining claims, we may assume that $\mfr{X}=\Spf(S)$. Since $\mfr{X}^\flat = \Spf(S^\flat)$ and $S/IS \cong S^\flat/(IS)^\flat$, we have $\mfr{X} \times_{\Spf(R)} \Spec(R/I) \cong \mfr{X}^\flat \times_{\Spf(R^\flat)} \Spec(R^\flat/I^\flat)$. We have an isomorphism $\Spd(S)\cong \Spd(S^\flat)$ by \Cref{prop:conttiltequiv}.
\end{proof}

In the above situation, we say that $\mfr{X}$ is the untilt of $\mfr{X}^\flat$ over $R$. Since $\mfr{X}^\diamond \cong (\mfr{X}^\flat)^\diamond$, $\mfr{X}$ is thick if and only if $\mfr{X}^\flat$ is thick. 

\subsection{Fully faithfulness of $v$-sheafification} \label{ssc:perfdff}
In this section, we study the fully faithfulness of the $v$-sheafification functor. 
First, we recall the following result. 

\begin{prop}\textup{(\cite[Proposition 18.3.1]{SW20})}
    The functor $X\mapsto X^\diamond$ from perfect schemes of characteristic $p$ to small $v$-sheaves on $\Perf$ is fully faithful. 
\end{prop}

We extend this result to the category of perfectoid formal schemes. For this, we apply continuous arc-descent (\Cref{prop:contarcdescent}). 

\begin{lem} \label{lem:prodarcpts}
    Let $R$ be a complete adic perfectoid ring and let $I\subset R$ be a finitely generated ideal of definition. There exists a set of continuous homomorphisms $\{R\to V_\lambda\}_{\lambda \in \Lambda}$ with $V_\lambda$ an $I$-adically complete valuation ring with an algebraically closed fraction field such that the complete $I$-adic ring $S=\prod_{\lambda \in \Lambda} V_\lambda$ satisfies that $R\to S$ is a continuous arc-cover and $\Spd(S)\to \Spd(R)$ is a $v$-cover. 
\end{lem}
\begin{proof}
    For each point $x\in \lvert \Spd(R) \rvert$, take a continuous homomorphism $R\to V_x$ with $V_x$ a $p$-complete valuation ring with an algebraically closed fraction field such that $x$ is the image of the closed point along $\Spd(V_x[\tfrac{1}{\varpi}],V_x)\to \Spd(R)$ with $\varpi \in V_x$ a pseudo-uniformizer. Let $S=\prod_{x \in \lvert \Spd(R) \rvert} V_x$. The map $\lvert \Spd(S) \rvert \to \lvert \Spd(R) \rvert$ is surjective, so $\Spd(S)\to \Spd(R)$ is surjective by \cite[Lemma 2.26]{Gle24} and \cite[Lemma 12.11]{Sch17}. Note that $R\to S$ is automatically a continuous arc-cover since each homomorphism $R \to V$ to a valuation ring $V$ of rank $1$ corresponds to a point $\Spd(V[\tfrac{1}{\varpi}],V)\to \Spd(R)$ with $\varpi \in V$ a pseudo-uniformizer. 
\end{proof}

\begin{prop}\label{lem:perfdrep}
    Let $\mfr{X}$ be a perfectoid formal scheme and let $\mfr{Y}$ be a formal scheme over $\bb{Z}_p$.
    The map $\mrm{Hom}_{\Spf(\bb{Z}_p)}(\mfr{X},\mfr{Y})\to \mrm{Hom}_{\Spd (\bb{Z}_p)}(\mfr{X}^\diamond,\mfr{Y}^\diamond)$ is bijective. 
\end{prop}
\begin{proof}
    By following the argument given in the proof of \cite[Theorem 2.16]{AGLR22}, we may assume that $\mfr{X}$ and $\mfr{Y}$ are affine. Let $\mfr{X}=\Spf(R)$ and $\mfr{Y}=\Spf(A)$ with $R$ a complete adic perfectoid ring. Let $f\colon \Spd(R)\to \Spd(A)$ be an arbitrary map. Let $I\subset R$ be a finitely generated ideal of definition and take a continuous arc-cover $R\to S=\prod_{\lambda \in \Lambda} V_\lambda$ as in \Cref{lem:prodarcpts}. For each $\lambda\in \Lambda$, $\vert \cdot \vert_{\lambda}$ denotes the continuous valuation on $R$ corresponding to $V_\lambda$. Let $f_1,\ldots, f_r \in R$ be a finite set of generators of $I$. Take a decomposition $\Lambda = \coprod_{0\leq i \leq r} \Lambda_i$ so that $\vert \cdot \vert_\lambda$ is non-analytic for $\lambda \in \Lambda_0$ and $\lvert f_j \rvert_\lambda \leq \lvert f_i \rvert_\lambda \neq 0$ for $1\leq i,j \leq r$ and $\lambda \in \Lambda_i$. Let $S_i = \prod_{\lambda \in \Lambda_i} V_\lambda$. First, the composition $\Spd(S_0)\to \Spd(R) \to \Spd(A)$ is represented by a unique homomorphism $f_0\colon A\to A_\red^\perf \to R_\red \to S_0$ by \cite[Proposition 18.3.1]{SW20}. For $1 \leq i \leq r$, the composition $\Spd(S_i[\tfrac{1}{f_i}], S_i) \to \Spd(S_i) \to \Spd(R) \to \Spd(A)$ is represented by a unique continuous homomorphism $f_i\colon A\to S_i$. By \cite[Proposition 4.9]{Gle24}, $f_i$ represents the composition $\Spd(S_i)\to \Spd(R) \to \Spd(A)$. The product $(f_i)_{0\leq i \leq r} \colon A\to \prod_{0\leq i \leq r} S_i = S$ represents the composition $\Spd(S) \to \Spd(R) \to \Spd(A)$. Thus, we see that $\Spd(S)\to \Spd(R) \to \Spd(A)$ is represented by a unique continuous homomorphism $A\to S$. 

    Take a continuous arc-cover $g\colon S \widehat{\otimes}_R S \to S'$ as in \Cref{lem:prodarcpts}. Let $i_1, i_2\colon S \to S\widehat{\otimes}_R S$ be homomorphisms such that $i_1(s) = s\otimes 1$ and $i_2(s) = 1\otimes s$ for $s\in S$. Since the composition $\Spd(S') \to \Spd(R) \to \Spd(A)$ is represented by a unique continuous homomorphism $A \to S'$, we have $g\circ i_1 \circ (f_i) = g \circ i_2 \circ (f_i)$. Since $g$ is injective and the equalizer of $i_1$ and $i_2$ is $R$ by \Cref{cor:arcacyc}, we see that the product $(f_i)$ factors through $R\subset S$. By \Cref{cor:arcsubsp}, $(f_i)$ induces a continuous homomorphism $A\to R$ and we see that it represents $f$ by the surjectivity of $\Spd(S) \to \Spd(R)$. 
\end{proof}

\subsection{Analytic locus}

In this section, we study analytic loci of perfectoid formal schemes and prove a perfectoid analogue of \cite[Proposition 3]{Lou17}. As in the analytic case \cite[Lemma 6.5]{Sch12}, we need the following approximation lemma.

\begin{lem} \textup{(\cite[Lemma 5.5, 5.16]{Ked13})} \label{lem:approximation_lemma}
    Let $R$ be a complete adic perfectoid ring. For every $f \in R$ and $k \geq 0$, there is an expression $f = g^\sharp + p h$ with $g \in R^\flat$ and $h \in R$ such that for every $\vert \cdot \vert \in \Spa(R)$, we have 
    \[
        \lvert h \rvert \leq \lvert g^\sharp \rvert \quad \text{or} \quad
        \lvert h \rvert \leq \lvert \xi_{R, 0}^\sharp \rvert^k. 
    \]
\end{lem}
\begin{proof}
    The same argument as in loc. cit. works here. For the sake of completeness, we include the proof here with our notation. 

    Take an element $x_0 \in W(R^\flat)$ such that $\theta_R(x) = f$ and let $u = (p^{-1}(\xi_R - [\xi_{R, 0}]))^{-1}$. We construct a sequence $x_0, x_1, \ldots$ as follows: if we write $x_i = \sum_{j \geq 0} p^j [x_{i, j}]$, then we set
    \begin{equation} \label{eq:definition_xi}
        x_{i + 1} = x_i - p^{-1}(x_i - [x_{i, 0}])\cdot u \xi_R = [x_{i, 0}] - p^{-1}(x_i - [x_{i, 0}]) \cdot u [\xi_{R, 0}]. 
    \end{equation}
    Here, the second equality follows from the definition of $u$. For every $\vert \cdot \vert \in \Spa(R^\flat)$, we will show that for every $i \geq 0$, we have 
    \begin{equation} \label{eq:xij_condition}
        \lvert x_{i, j} \rvert \leq \lvert x_{i, 0} \rvert \quad (\forall j > 0)
        \quad \text{or} \quad 
        \lvert x_{i, j} \rvert \leq \lvert \xi_{R, 0} \rvert^i \quad (\forall j > 0). 
    \end{equation}
    Note that the latter holds automatically for $i = 0$. 

    Let $N \geq 0$ be the minimum integer (or $\infty$) such that $\lvert x_{N, 0} \rvert > \lvert \xi_{R, 0} \rvert^{N + 1}$. For $0 \leq i \leq N$, \eqref{eq:definition_xi} implies $ \lvert x_{i, j} \rvert \leq \lvert \xi_{R, 0} \rvert^i$ by induction on $i$. Then, \eqref{eq:definition_xi} implies $\lvert x_{N + 1, j} \rvert \leq \lvert x_{N + 1, 0} \rvert$. If $\lvert x_{i, j} \rvert \leq \lvert x_{i, 0} \rvert$ holds for some $i$, then \eqref{eq:definition_xi} implies that the same holds for $i + 1$. Thus, \eqref{eq:xij_condition} holds for every $i \geq 0$. 

    Now, for every $\vert \cdot \vert \in \Spa(R)$, we can take $\vert \cdot \vert^\flat \in \Spa(R^\flat)$ such that 
    $
        \lvert f \rvert^\flat = \lvert f^\sharp \rvert 
    $
    for $f \in R^\flat$ (cf.\ \Cref{lem:tilting_equivalence_M(R)}). We show that the claim holds if we set 
    \[
        g = x_{k, 0} ,\quad 
        h = \sum_{j \geq 0} p^j x_{k, j + 1}^\sharp. 
    \]
    If $\lvert x_{k, j} \rvert^\flat \leq \lvert x_{k, 0} \rvert^\flat$, we have $\lvert h \rvert \leq \lvert g^\sharp \rvert$. Otherwise, $\lvert x_{k, j} \rvert^\flat \leq \lvert \xi_{R, 0} \rvert^{\flat, k}$, so we get $\lvert h \rvert \leq \lvert \xi_{R, 0}^\sharp \rvert^k$. 
\end{proof}

\begin{lem} \label{lem:Jsharpn}
    Let $R$ be a complete adic perfectoid ring and let $f_1,\ldots,f_r$ be elements of $R^\flat$ such that $(f_1,\ldots,f_r)\subset R^\flat$ is open and $(f_1^\sharp, \ldots, f_r^\sharp)\subset R$ is open. Let $J\subset R^\flat$ be the minimum $F$-standard ideal containing $f_1,\ldots,f_r$. We have $(f_1^\sharp,\ldots, f_r^\sharp)^r\cdot (J^\sharp)^n \subset (f_1^\sharp,\ldots, f_r^\sharp)^n$ for every $n\geq 0$.
\end{lem}
\begin{proof}
    As in \cite[Definition 2.3]{Tak25}, an element of $J$ can be written as a linear combination of elements $f_1^{a_1}\cdots f_r^{a_r}$ with $a_i \in \bb{Z}[\tfrac{1}{p}]_{\geq 0}$ and $\sum_{1\leq i \leq r} a_i = 1$. Thus, as in \Cref{lem:prodFstd}, $[J]$ is the $p$-adic closure of the ideal of $W(R^\flat)$ generated by $[f_1]^{a_1}\cdots [f_r]^{a_r}$ with $a_1,\ldots, a_r$ as above. By \Cref{lem:Jsharp}, $J^\sharp$ is generated by elements $(f_1^{\sharp})^{a_1}\cdots (f_r^{\sharp})^{a_r}$ with $a_1,\ldots, a_r$ as above. Thus, $(J^\sharp)^n$ is generated by elements $(f_1^{\sharp})^{a_1}\cdots (f_r^{\sharp})^{a_r}$ with $a_i \in \bb{Z}[\tfrac{1}{p}]_{\geq 0}$ and $\sum_{1\leq i \leq r} a_i = n$. Since we have $\sum_{1\leq i \leq r} [a_i] \geq n-r$, we see that $(J^\sharp)^n \subset (f_1^\sharp, \ldots, f_r^\sharp)^{n-r}$ and the claim holds. 
\end{proof}

\begin{prop} \label{prop:Tatedomisperfd}
    Let $R$ be a complete adic perfectoid ring. For every Tate rational domain $U(\tfrac{T_1,\ldots,T_r}{s}) \subset \Spa(R)$, $(R\langle \tfrac{T_1,\ldots,T_r}{s} \rangle, R\langle \tfrac{T_1,\ldots,T_r}{s} \rangle^+)$ is a perfectoid Huber pair. 
\end{prop}
\begin{proof}
    First, we show that $s,T_1,\ldots,T_r$ can be taken inside the image of $(-)^\sharp\colon R^\flat \to R$. We may suppose that $T_1 = \xi_{R, 0}^{\sharp, n}$ for some $n \geq 0$. Then, we apply \Cref{lem:approximation_lemma} to $f = s$ with $k = n$ and let $s^\flat = g$. It is easy to check $U(\tfrac{T_1,\ldots, T_r}{s}) = U(\tfrac{T_1,\ldots, T_r}{(s^\flat)^\sharp})$ using the property of $s - (s^\flat)^\sharp$. In the same way, we apply \Cref{lem:approximation_lemma} to $f = T_2,\ldots, T_r$ and get $T_2^\flat, \ldots, T_r^\flat$. We set $T_1^\flat = \xi_{R, 0}^{n}$. Then, it is easy to see that $U(\tfrac{T_1,\ldots, T_r}{s}) = U(\tfrac{(T_1^\flat)^\sharp,\ldots, (T_r^\flat)^\sharp}{(s^\flat)^\sharp})$. Note that since $(T_i^\flat)^\sharp \equiv T_i \pmod{p}$ and $T_1 = \xi_{R, 0}^{\sharp, n}$, $(T_1^\flat)^\sharp,\ldots, (T_r^\flat)^\sharp$ generate an open ideal. Similarly, $T_1^\flat,\ldots, T_r^\flat$ generate an open ideal. From now on, we assume $s = (s^\flat)^\sharp$ and $T_i = (T_i^\flat)^\sharp$. 

    First, we show that $(R^\flat\langle \tfrac{T_1^\flat,\ldots,T_r^\flat}{s^\flat} \rangle, R^\flat\langle \tfrac{T_1^\flat,\ldots,T_r^\flat}{s^\flat} \rangle^+)$ is a complete perfect Tate Huber pair. By \Cref{prop:perfduniform}, the inverse of the Frobenius on $R^\flat$ is continuous, so $R^\flat \xrightarrow{x\mapsto x^{1/p}} R^\flat \to R^\flat \langle  \tfrac{T_1^\flat,\ldots,T_r^\flat}{s^\flat} \rangle$ factors through $R^\flat \to R^\flat \langle  \tfrac{T_1^\flat,\ldots,T_r^\flat}{s^\flat} \rangle$ by the universality of $R^\flat \langle  \tfrac{T_1^\flat,\ldots,T_r^\flat}{s^\flat} \rangle$. It gives the inverse to the Frobenius on $R^\flat \langle  \tfrac{T_1^\flat,\ldots,T_r^\flat}{s^\flat} \rangle$ and we see that $R^\flat \langle  \tfrac{T_1^\flat,\ldots,T_r^\flat}{s^\flat} \rangle$ is perfect. In particular,  $(R^\flat\langle \tfrac{T_1^\flat,\ldots,T_r^\flat}{s^\flat} \rangle, R^\flat\langle \tfrac{T_1^\flat,\ldots,T_r^\flat}{s^\flat} \rangle^+)$ is uniform by \cite[Proposition 6.1.6]{SW20}. 

    Next, we show that $(R\langle \tfrac{T_1,\ldots,T_r}{s} \rangle, R\langle \tfrac{T_1,\ldots,T_r}{s} \rangle^+)$ is uniform. The $s$-adic completion $R[\tfrac{T_1,\ldots,T_r}{s}]^\wedge$ of $R[\tfrac{T_1,\ldots,T_r}{s}]$ is a ring of definition of $R\langle \tfrac{T_1,\ldots,T_r}{s} \rangle$. It is enough to show that $s^k a \in R[\tfrac{T_1,\ldots,T_r}{s}]^\wedge$ holds for every $a\in R\langle \tfrac{T_1,\ldots,T_r}{s} \rangle^+$ for some $k\geq 0$. Since $R[1/s]$ is dense in $R\langle \tfrac{T_1,\ldots,T_r}{s} \rangle$, we may assume $a \in R[1/s]$ and let $a = b_0 / s^m$ with $b_0 \in R$. 

    Now, we will reduce to the case where $b_0 = (b_0^\flat)^\sharp$ for some $b_0^\flat \in R^\flat$. Since $a \in  R\langle \tfrac{T_1,\ldots,T_r}{s} \rangle^+$, $\lvert b_0 \rvert \leq \lvert s \rvert^m$ for every $\vert \cdot \vert \in U(\tfrac{T_1,\ldots,T_r}{s})$. We apply \Cref{lem:approximation_lemma} to $f = b_0$ with $k = nm$ and let $b_0^\flat = g$ and $b_1 = h$. It is easy to check that $\lvert (b_0^\flat)^\sharp \rvert, \lvert b_1 \rvert \leq \lvert s \rvert^m$ for every $\vert \cdot \vert \in U(\tfrac{T_1,\ldots,T_r}{s})$. By repeating this argument to $b_1$ inductively, we get an expression
    \[
        a = ((b_0^\flat)^\sharp + p (b_1^\flat)^\sharp + \cdots + p^{nm} b_{nm}) / s^m
    \] 
    with $(b_i^\flat)^\sharp / s^m \in R\langle \tfrac{T_1,\ldots,T_r}{s} \rangle^+$. Since $p^{nm} b_{nm}/ s^m \in R[\tfrac{T_1,\ldots,T_r}{s}]$, we are reduced to the case $a = b^\sharp / s^m$ for some $b \in R^\flat$. 
    
    Now, the tilting equivalence $\Spd(R) \cong \Spd(R^\flat)$ restricts to a homeomorphism $U(\tfrac{T_1,\ldots,T_r}{s}) \cong U(\tfrac{T_1^\flat,\ldots,T_r^\flat}{s^\flat})$ and $\lvert b \rvert \leq \lvert (s^\flat)^n \rvert$ for every $\vert \cdot \vert \in U(\tfrac{T_1^\flat,\ldots,T_r^\flat}{s^\flat})$. Then, $b/(s^\flat)^n\in R^\flat\langle \tfrac{T_1^\flat,\ldots,T_r^\flat}{s^\flat} \rangle^+$. Since $R^\flat\langle \tfrac{T_1^\flat,\ldots,T_r^\flat}{s^\flat} \rangle$ is uniform, we have $(s^\flat)^kb/(s^\flat)^n\in R^\flat[\tfrac{T_1^\flat,\ldots,T_r^\flat}{s^\flat}]$ for some $k\geq 0$ independent of $a$. Thus, by replacing $b$ and $n$, we may assume $(s^\flat)^kb \in (s^\flat,T_1^\flat,\ldots,T_r^\flat)^n$. Let $J\subset R^\flat$ be the minimum $F$-standard ideal containing $(s^\flat,T_1^\flat,\ldots,T_r^\flat)$. By \Cref{lem:prodFstd}, $[(s^\flat)^kb]$ lies in the $p$-adic closure of $[J]^n$ in $W(R^\flat)$. It follows that $s^kb^\sharp \in (J^\sharp)^n$. By \Cref{lem:Jsharpn}, we have $s^{k+r+1}b^\sharp \in (s,T_1,\ldots, T_r)^n$. It follows that $s^{k+r+1}b^{\sharp}/s^n \in R[\tfrac{T_1,\ldots,T_r}{s}]$. 

    Finally, we show that $(R\langle \tfrac{T_1,\ldots,T_r}{s} \rangle, R\langle \tfrac{T_1,\ldots,T_r}{s} \rangle^+)$ is a perfectoid Huber pair. Let $J\subset R^\flat$ be the minimum $F$-standard ideal containing $s^\flat,T_1^\flat, \ldots, T_r^\flat$. Then, $R^\flat[J/s^\flat]\subset R^\flat[1/s^\flat]$ is perfect and its $s^\flat$-adic completion $R^\flat[J/s^\flat]^\wedge$ is a ring of definition of $R^\flat\langle \tfrac{T_1^\flat,\ldots,T_r^\flat}{s^\flat} \rangle$. Moreover, $W(R^\flat[J/s^\flat]) \subset W(R^\flat[1/s^\flat])$ is the $p$-adic completion of $W(R^\flat)[[J]/[s^\flat]] \subset W(R^\flat)[1/[s^\flat]]$. By \Cref{lem:Wittcomp}, $W(R^\flat[J/s^\flat]^\wedge)$ is the $(p,[s^\flat])$-adic completion of $W(R^\flat)[[J]/[s^\flat]]$. We see from the tilting equivalence between $R$ and $R^\flat$ that the composition $W(R^\flat) \to R \to R\langle \tfrac{T_1,\ldots,T_r}{s} \rangle$ induces $W(R^\flat)[[J]/[s^\flat]] \to R\langle \tfrac{T_1,\ldots,T_r}{s} \rangle^+$. Since $(R\langle \tfrac{T_1,\ldots,T_r}{s} \rangle, R\langle \tfrac{T_1,\ldots,T_r}{s} \rangle^+)$ is uniform, it induces a continuous homomorphism $W(R^\flat[J/s^\flat]^\wedge)\to R\langle \tfrac{T_1,\ldots,T_r}{s} \rangle^+$. 
    
    Let $S$ be the untilt of $R^\flat[J/s^\flat]^\wedge$ over $R$. The previous map provides $S \to R\langle \tfrac{T_1,\ldots,T_r}{s} \rangle^+$. Since $s^\flat$ is a topologically nilpotent non-zero-divisor of $S^\flat$, $s$ is a topologically nilpotent non-zero-divisor of $S$. Let $S^+\subset S[1/s]$ be the integral closure of $S$. By \cite[Lemma 3.21]{BMS18}, $(S[1/s],S^+)$ is a perfectoid Huber pair and we have a continuous homomorphism $(S[1/s],S^+) \to (R\langle \tfrac{T_1,\ldots,T_r}{s} \rangle, R\langle \tfrac{T_1,\ldots,T_r}{s} \rangle^+)$. It is enough to show that this is an isomorphism. Since the composition $\Spa(S[1/s],S^+)\to \Spa(S)\to \Spa(R)$ factors through $U(\tfrac{T_1,\ldots,T_r}{s})$, we get a continuous homomorphism $(R\langle \tfrac{T_1,\ldots,T_r}{s} \rangle, R\langle \tfrac{T_1,\ldots,T_r}{s} \rangle^+) \to (S[1/s],S^+)$. It is the inverse to the previous map because both continuous maps
    \[
        S[1/s] \to R\langle \tfrac{T_1,\ldots,T_r}{s} \rangle, \quad
        R\langle \tfrac{T_1,\ldots,T_r}{s} \rangle \to S[1/s]
    \]
    are $R[1/s]$-linear and $R[1/s]$ is dense both in $S[1/s]$ and in $R\langle \tfrac{T_1,\ldots,T_r}{s} \rangle$. 
\end{proof}

\begin{cor}
    Let $\mfr{X}$ be a perfectoid formal scheme. Then, $\mfr{X}^\an$ is a perfectoid space. 
\end{cor}
\begin{proof}
    By \Cref{prop:Tatedomisperfd}, $\Spa(R)^\an$ is a perfectoid space, so $\mfr{X}^\an$ is perfectoid. 
\end{proof}


\begin{prop} \label{prop:thickan}
    Let $R$ be a thick complete adic perfectoid ring. The homomorphism $R=\Gamma(\Spf(R),\cl{O}) \to \Gamma(\Spa(R)^\an, \cl{O}^+)$ is a homeomorphism onto the image and the image contains $\Gamma(\Spa(R)^\an, \cl{O}^{\circ \circ})$. 
\end{prop}
\begin{proof}
    First, we show that $R\to \Gamma(\Spa(R)^\an, \cl{O}^+)$ is injective. Suppose that $a\in R$ maps to $0$. We take a continuous arc-cover $R\to S=\prod_{\lambda\in \Lambda} V_\lambda$ as in \Cref{lem:prodarcpts}. It is enough to show that the image of $a$ in $V_\lambda$ is zero for every $\lambda \in \Lambda$. Let $K_\lambda=\Frac(V_\lambda)$ and consider the map $\Spa(K_\lambda, V_\lambda) \to \Spa(R)$. If it factors through $\Spa(R)^\an$, the claim follows from our assumption on $a$. Otherwise, $\Spa(K_\lambda, V_\lambda)\to \Spa(R)$ factors through $\Spa(R_\red)$. It is enough to show that the restriction of $a$ to $\Spec(R_\red)$ is zero. Now, since $\Spf(R)$ is thick, the specialization map of $\Spf(R)$ is surjective. Thus, for every point $x\in \Spec(R_\red)$, there is a point $\Spa(K,V)\to \Spa(R)^\an$ specializing to $x$. Since the pullback of $a$ along $\Spf(V) \to \Spf(R)$ is zero, we see that $a$ is zero at $x$. Since $R_\red$ is reduced, the restriction of $a$ to $\Spec(R_\red)$ is zero and we get the claim. 

    Next, we show that the image of $R\to \Gamma(\Spa(R)^\an, \cl{O}^+)$ contains $\Gamma(\Spa(R)^\an, \cl{O}^{\circ \circ})$. For each $a\in \Gamma(\Spa(R)^\an, \cl{O}^{\circ \circ})$, we define $a_\lambda \in V_\lambda^{\circ \circ}$ for every $\lambda\in \Lambda$ so that $a_\lambda$ is the pullback of $a$ along $\Spa(K_\lambda, V_\lambda) \to \Spa(R)^\an$ if $\Spa(K_\lambda, V_\lambda)$ is an analytic point of $\Spa(R)$, and $a_\lambda=0$ otherwise. The collection $(a_\lambda)_{\lambda\in \Lambda}$ defines an element of $S$. We show that it descends to $R$. 

    Let $i_1, i_2\colon S \to S\widehat{\otimes}_R S$ be homomorphisms such that $i_1(s) = s\otimes 1$ and $i_2(s) = 1\otimes s$ for $s\in S$. Let $b=i_1((a_\lambda)) - i_2((a_\lambda)) \in S\widehat{\otimes}_R S$. The restriction of $(a_\lambda)$ to $\Spa(S)^\an$ is equal to the pullback of $a$ along $\Spa(S)^\an \to \Spa(R)^\an$, so the restriction of $b$ to $\Spa(S\widehat{\otimes}_R S)^\an$ is zero. Moreover, the restriction of $(a_\lambda)$ to $\Spec(S_\red)$ is zero since $a \in \Gamma(\Spa(R)^\an, \cl{O}^{\circ \circ})$ and $a_\lambda=0$ when $\Spa(K_\lambda,V_\lambda)$ is a discrete point of $\Spa(R)$. Thus, the restriction of $b$ to $\Spec((S\widehat{\otimes}_R S)_\red)$ is zero. By the same argument as in the proof of injectivity, we see that $b=0$. Thus, the element $(a_\lambda)$ descends to $R$. It follows from \cite[Theorem 8.7]{Sch17} that it maps to $a$ under $R\to \Gamma(\Spa(R)^\an, \cl{O}^+)$ since $\Spa(S)^\an \to \Spa(R)^\an$ is a $v$-cover of perfectoid spaces. 

    Finally, we prove that $R\to \Gamma(\Spa(R)^\an, \cl{O}^+)$ is a homeomorphism onto the image. Since $\Spa(R)^\an$ is a perfectoid space, and in particular uniform, $\Gamma(\Spa(R)^\an, \cl{O}^+) \to \prod_{\lambda \in \Lambda^\an} V_\lambda$ is a homeomorphism onto the image with $\Lambda^\an \subset \Lambda$ consisting of indices of analytic points $\Spa(K_\lambda,V_\lambda) \to \Spa(R)^\an$ (see e.g. \cite[Theorem 5.2.1]{SW20}). It suffices to show that the composition $R\to \prod_{\lambda \in \Lambda } V_\lambda \to \prod_{\lambda \in \Lambda^\an } V_\lambda$ is a homeomorphism onto the image. This follows from \Cref{prop:perfduniform} since discrete valuations on $R$ do not contribute to the supremum over $\cl{M}_I(R)$ for an ideal of definition $I\subset R$. 
\end{proof}

Recall from \Cref{ssc:kimber} that a complete adic ring $R$ is distinguished if $R$ is $I$-torsion free for some ideal of definition $I\subset R$. 

\begin{cor} \label{cor:Itorsfree}
    Let $R$ be a complete adic perfectoid ring with an ideal of definition $I\subset R$. Then, $R$ is thick if and only if $R$ is distinguished. 
\end{cor}
\begin{proof}
    Since $\Gamma(\Spa(R)^\an, \cl{O}^+)$ is $I$-torsion free, the claim follows from \Cref{cor:nzdgenerator} and \Cref{prop:thickan}. 
\end{proof}

\begin{rmk}
    Since $R$ is thick if and only if $R^\flat$ is thick, it follows that $R$ is distinguished if and only if $R^\flat$ is distinguished. 
\end{rmk}

\section{Dilatation theory of $v$-sheaves}
\label{sec:vdil}


\subsection{Very good covers} \label{ssec:verygoodcovers}

In this section, we introduce very good covers of complete adic rings as preliminaries to good covers. 

Let $R$ be a complete adic ring with an ideal of definition $p \in I\subset R$. Let $R_\bullet = (R=R_0 \hookrightarrow R_1 \hookrightarrow \cdots)$ be a sequence of rings with injective transition maps and let $\Gamma_\bullet = (\{1\} = \Gamma_0 \twoheadleftarrow \Gamma_1 \twoheadleftarrow \cdots)$ be a sequence of finite groups with surjective transition maps. We set $\Gamma_\infty = \lim_{n\geq 0} \Gamma_n$ and $\Gamma^n_m = \Ker(\Gamma_m \twoheadrightarrow \Gamma_n)$ for every $0\leq n \leq m \leq \infty$. 

\begin{defi} \label{defi:actofsystem}
    An action of $\Gamma_\bullet$ on $R_\bullet$ is a family of an action of $\Gamma_n$ on $R_n$ for each $n\geq 0$ such that the map $R_n \to R_{n + 1}$ is equivariant under $\Gamma_{n + 1}$ for every $n \geq 0$. Here, $\Gamma_{n + 1}$ acts on $R_n$ via the quotient $\Gamma_{n + 1} \twoheadrightarrow \Gamma_n$.
\end{defi}

In particular, the action of $\Gamma^n_m$ on $R_m$ is $R_n$-linear for every $0\leq n \leq m < \infty$. 

\begin{defi} \label{defi:verygoodcov}
    A pair $(R_\bullet, \Gamma_\bullet)$ with an action of $\Gamma_\bullet$ on $R_\bullet$ is a very good cover of a complete adic ring $R$ over $\bb{Z}_p$ if 
    \begin{enumerate}
        \item the action of $\Gamma_{n+1}^n$ on $R_{n+1}$ over $R_n$ satisfies the condition in \Cref{lem:invanalogue}, 
        \item $R_{n+1}/R_{n}$ is a finite projective $R_n$-module for every $n\geq 0$, and
        \item the $I$-completed colimit $R_\infty = \colim_{n\geq 0}^\wedge R_n$ is a complete adic perfectoid ring.
    \end{enumerate}
\end{defi}

We list some basic properties of very good covers. 

\begin{lem} \label{lem:appdirect}
    Let $(R_\bullet, \Gamma_\bullet)$ be a very good cover of a complete adic ring $R$ over $\bb{Z}_p$.
    \begin{enumerate}
        \item $R_m$ is finite projective over $R_n$ and the action of $\Gamma_{m}^n$ on $R_{m}$ over $R_n$ satisfies the condition in \Cref{lem:invanalogue} for every $0 \leq n \leq m < \infty$. 
        \item $R_n$ is $I$-adically complete and is a direct summand of $R_\infty$ as an $R_n$-module for every $n\geq 0$. 
    \end{enumerate}
\end{lem}
\begin{proof}
    Let $S_0=R_0=R$ and $S_n=R_n/R_{n-1}$ for $n\geq 1$. Since $S_n$ is finite projective over $R_n$, we can take a decomposition $R_{n+1}=R_n\oplus S_{n+1}$ and we have $R_n = S_0\oplus \cdots \oplus S_n$. In particular, $R_m = R_n \oplus S_{n+1} \oplus \cdots \oplus S_m$ and $R_m$ is finite projective over $R_n$. Then, (1) follows from \Cref{lem:invtrans}. Moreover, $R_n$ is finite projective over $R$ and $R_n$ is $I$-adically complete. Then, we have $R_\infty = \colim_{m \geq n}^\wedge (R_n\oplus S_{n+1} \oplus \cdots \oplus S_m) = R_n\oplus (\widehat{\oplus}_{m > n} S_m)$, so $R_n$ is a direct summand of $R_\infty$ as an $R_n$-module. 
\end{proof}

\begin{lem} \label{lem:transquot}
    Let $(R_\bullet, \Gamma_\bullet)$ be a very good cover of a complete adic ring $R$ over $\bb{Z}_p$. The map $\Spd(R_m)\to \Spd(R_n)$ is proper and a geometric quotient by $\Gamma_m^n$ for every $0\leq n \leq m \leq \infty$. 
\end{lem}
\begin{proof}
    As in the proof of \Cref{prop:affquot}, $R_m$ is integral over $R_n$ and we have the claim for $m<\infty$ by \Cref{lem:mapSpd} and \Cref{prop:affquot}. Since properness and geometric quotients are preserved under limits (apply \cite[Proposition 18.3]{Sch17} for the former, and see \Cref{prop:limquot} for the latter), we have the claim for $m=\infty$. 
\end{proof}

The main source of complete adic rings admitting very good covers is as follows. 

\begin{prop} \label{prop:exadmit}
    Let $E$ be a complete discrete valuation field over $\bb{Q}_p$ with perfect residue field and let $O_E$ be the ring of integers of $E$. For every $r\geq 0$, $O_E \langle T_1,\ldots, T_r \rangle$ admits a very good cover. 
\end{prop}

\begin{proof}
    Let $\pi \in O_E$ be a uniformizer of $E$. For $n\geq 0$, we set $E_n = E[\pi^{1/p^n}, \zeta_{p^n}]$ with $\zeta_{p^n}$ a $p^n$-th root of unity. We take a family $\{\zeta_{p^n}\}$ so that $\zeta_{p^{n+1}}^p=\zeta_{p^n}$. Then, $E_n$ is a finite Galois extension of $E$. Let $O_{E_n}$ be the ring of integers of $E_n$ and let $R_n = O_{E_n} \langle T_1^{1/p^n}, \ldots, T_r^{1/p^n} \rangle$, $\Gamma_n = \Gal(E_n/E) \ltimes (\bb{Z}/p^n)^{\oplus r}$. Here, the semidirect product is taken with respect to the cyclotomic character $\Gal(E_n / E) \to (\bb{Z}/p^n)^\times$. There is a natural inclusion $R_n \hookrightarrow R_{n+1}$ and a natural surjection $\Gamma_{n+1}\twoheadrightarrow \Gamma_n$ for every $n\geq 0$. Moreover, there is an action of $\Gamma_n$ on $R_n$ such that the action of $(\sigma, a_1,\ldots, a_r) \in \Gal(E_n/E)\ltimes (\bb{Z}/p^n)^{\oplus r}$ sends $x\in O_{E_n}$ to $\sigma(x)$, and sends $T_i^{1/p^n}$ to $\zeta_{p^n}^{a_i} T_i^{1/p^n}$ for $1\leq i \leq r$. This defines an action of $\Gamma_\bullet$ on $R_\bullet$. It is easy to see that $R_n = R_{n+1}^{\Gamma_{n+1}^n}$ and $R_{n+1}/R_n$ is finite free over $R_n$ (since $O_{E_{n+1}}/O_{E_n}$ is finite free over $O_{E_n}$). It is enough to show that $R_\infty$ is a perfectoid ring. Consider the $p$-completed colimit $O_{E_\infty} = \colim^\wedge_{n\geq 0} O_{E_n}$ and let $E_\infty  = O_{E_\infty}[\tfrac{1}{p}]$. Then, $E_\infty$ is a perfectoid field and $O_{E_{\infty}}$ is a perfectoid ring. Since $R_\infty$ is $\pi$-torsion free, it is enough by \cite[Lemma 3.10]{BMS18} to check that the Frobenius $R_{\infty}/\pi^{1/p} R_\infty\to R_\infty/\pi R_\infty$ is an isomorphism. Since $O_{E_\infty}$ is a perfectoid ring, it follows from the description $R_\infty/\pi R_\infty = \colim_{n\geq 0} (O_{E_n}/\pi O_{E_n})[T_1^{1/p^i},\ldots, T_r^{1/p^i}]$. 
\end{proof}

The class of complete adic rings admitting very good covers is closed under \'{e}tale localization and completion. 

\begin{lem} \label{lem:etperfcov}
    Let $R$ be a complete adic ring with an ideal of definition $p \in I\subset R$. Let $S$ be an $I$-adically complete $I$-completely \'{e}tale $R$-algebra. If $R$ admits a very good cover, $S$ admits a very good cover. 
\end{lem}
\begin{proof}
    Let $(R_\bullet, \Gamma_\bullet)$ be a very good cover of $R$. Let $S_\bullet = R_\bullet {\otimes}_{R} S$. The condition in \Cref{lem:invanalogue} is stable under base change, so the action of $\Gamma^n_{n+1}$ on $S_{n+1}$ over $S_n$ satisfies that condition. By construction, $S_{n+1}/S_n$ is finite projective over $S_n$. Moreover, $S_\infty = R_\infty \widehat{\otimes}_{R} S$ is a complete adic perfectoid ring by \Cref{lem:etperfd}. 
\end{proof}

\begin{lem} \label{lem:compperfcov}
    Let $R$ be a complete adic ring in which $p$ is topologically nilpotent, and let $J\subset R$ be a finitely generated open ideal. If $R$ admits a very good cover, the $J$-adic completion $R^{\wedge}$ admits a very good cover. Moreover, $R^{\wedge}$ admits a very good cover as a $p$-adic ring if $R$ is $p$-torsion free, $p$-adic and Noetherian and $R/pR$ is $F$-finite. 
\end{lem}
\begin{proof}
    Let $(R_\bullet, \Gamma_\bullet)$ be a very good cover of $R$. Let $R^\wedge_\bullet = R_\bullet {\otimes}_{R} R^\wedge$. The condition in \Cref{lem:invanalogue} is stable under base change, so the action of $\Gamma^n_{n+1}$ on $R^\wedge_{n+1}$ over $R^\wedge_n$ satisfies that condition. By construction, $R^\wedge_{n+1}/R^\wedge_n$ is finite projective over $R^\wedge_n$. Moreover, $R^\wedge_\infty = R_\infty \widehat{\otimes}_{R} R^\wedge$ is isomorphic to the $J$-adic completion of $R_\infty$, so $R^\wedge_\infty$ is a complete adic perfectoid ring by \Cref{cor:tiltofcomp}. Thus, $(R_\bullet^\wedge, \Gamma_\bullet)$ is a very good cover of the $J$-adic ring $R^\wedge$. 

    We show the second claim. It is enough to show that the $p$-adic completion $\colim^\wedge_{n\geq 0} R^\wedge_n$ is perfectoid. Since $R_\infty$ is perfectoid, we can take an element $\pi \in R_\infty$ such that $\pi^p=pu$ for some $u\in R_\infty^\times$. We may approximate $\pi$ by an element of $R_n$ for sufficiently large $n$ and replace $R$ with $R_n$ to assume that $\pi \in R$. 
    
    Since $R$ is Noetherian, $R\to R^\wedge$ is flat, so $R^\wedge_\bullet$ is $p$-torsion free. By \cite[Lemma 3.10 (ii)]{BMS18}, it is enough to show that the Frobenius $\colim R^\wedge_n/\pi R^\wedge_n \to \colim R^\wedge_n/\pi^p R^\wedge_n$ is an isomorphism. Note that $\colim R_n/\pi R_n \to \colim R_n/\pi^p R_n$ is an isomorphism since $R_\infty$ is perfectoid, .
    
    Now, we regard $R_n / \pi^p R_n$ and $R^\wedge_n/\pi^p R^\wedge_n$ as $R$-modules along the Frobenius map $R \to R / \pi R \to R / \pi^p R$. Then, since $R / p R$ is $F$-finite, $R_n / \pi^p R_n$ is finite over $R$. Thus, we have
    \[
        (R_n / \pi^p R_n) \otimes_R R^\wedge \cong \lim_{i \geq 0} (R_n / \pi^p R_n) \otimes_R R / J^i \cong \lim_{i \geq 0} R_n / (\pi^p, \Frob(J^i)) \cong R^\wedge_n/\pi^p R^\wedge_n. 
    \]
    We will deduce the claim from this base change and the flatness of $R \to R^\wedge$. 
    
    First, since $R_n$ is a direct summand of $R_\infty$, $R_n/\pi R_n\to \colim_{m\geq n} R_m/\pi R_m$ is injective, so the Frobenius $R_n/\pi R_n \to R_n/\pi^p R_n$ is injective. Since $R\to R^\wedge$ is flat, $R_n^\wedge/\pi R^\wedge_n \to R^\wedge_n/\pi^p R^\wedge_n$ is injective, so $\colim R^\wedge_n/\pi R^\wedge_n \to \colim R^\wedge_n/\pi^p R^\wedge_n$ is injective. On the other hand, since $R_n/\pi^p R_n$ is finite over $R$, for every $n\geq 0$, there is $m\geq n$ such that the image of the Frobenius $R_m/\pi R_m \to R_m/\pi^p R_m$ contains $R_n/\pi^p R_n$. By passing to the $J$-adic completion, we see that the image of the Frobenius $R^\wedge_m/\pi R^\wedge_m \to R^\wedge_m/\pi^p R^\wedge_m$ contains $R^\wedge_n/\pi^p R^\wedge_n$. In particular, $\colim R^\wedge_n/\pi R^\wedge_n \to \colim R^\wedge_n/\pi^p R^\wedge_n$ is surjective. 
\end{proof}

Thanks to \Cref{lem:etperfcov}, the following notion is purely local in the Zariski topology. 

\begin{defi}
    Let $\mfr{X}$ be a formal scheme over $\bb{Z}_p$. We say that $\mfr{X}$ admits a very good cover if $\mfr{X}$ is Zariski locally isomorphic to the formal spectrum of a complete adic ring admitting a very good cover. 
\end{defi}

The main source of formal schemes admitting very good covers is as follows. 

\begin{prop} \label{prop:smoothadmit}
    Let $E$ be a complete discrete valuation field over $\bb{Q}_p$ with perfect residue field and let $O_E$ be the ring of integers of $E$. Every $p$-adic smooth formal $O_E$-scheme admits a very good cover. 
\end{prop}
\begin{proof}
    After Zariski localization, every $p$-completely smooth $O_E$-algebra is $p$-completely \'{e}tale over $O_E\langle T_1,\ldots, T_r \rangle$ for some $r\geq 0$ (by the topological invariance of \'{e}tale sites). Thus, the claim follows from \Cref{prop:exadmit} and \Cref{lem:etperfcov}. 
\end{proof}

\begin{rmk} \label{rmk:exverygood}
    By \Cref{lem:compperfcov} and \Cref{prop:smoothadmit}, formal schemes admitting very good covers arise as the completion of smooth schemes. Here are arithmetically important examples: the $p$-adic completion of integral models of Shimura varieties at hyperspecial levels, and Rapoport-Zink spaces at hyperspecial levels with the $p$-adic uniformization (see \cite[Corollary 4.9]{Mie20}). 
\end{rmk}

\subsection{Good covers} \label{ssec:goodcov}

In this section, we introduce good covers of reduced excellent complete adic rings, which are refinements of very good covers modeled on analytic loci. They satisfy a minimal set of properties of very good covers needed in the proof of our representability criterion. 

Let $R$ be a reduced excellent complete adic ring with an ideal of definition $p \in I\subset R$. Let $R_\bullet = (R=R_0 \to R_1 \to \cdots)$ be a sequence of reduced finite $R$-algebras. Let $\Gamma_\bullet = (\{1\} = \Gamma_0 \twoheadleftarrow \Gamma_1 \twoheadleftarrow \cdots)$ be a sequence of finite groups with surjective transition maps that acts on $R_\bullet$ in the sense of \Cref{defi:actofsystem}. To simplify the notation, for every $n\geq 0$ and every Tate rational domain $U_n \subset \Spa(R_n)^\an$, the pullback of $U_n$ in $\Spa(R_m)^\an$ is denoted by $U_m$ for every $m \geq n$. Later, this notation is also used for $m=\infty$. 

As a refinement of the condition (1) in \Cref{defi:verygoodcov}, we first put the following condition: 
\begin{quote}
    (quot) for every $n\geq 0$, $\Spd(R_{n+1}) \to \Spd(R_n)$ is a geometric quotient by $\Gamma_{n+1}^n$, and for every Tate rational domain $U_n \subset \Spa(R_n)^\an$, the pullback $\cl{O}(U_n) \to \cl{O}(U_m)$ is injective and satisfies the condition in \Cref{lem:invanalogue} with respect to the $\Gamma_m^n$-action. 
\end{quote}

The latter condition of (quot) is analytically local on $\Spa(R_n)^\an$. 

\begin{lem} \label{lem:quotislocal}
    If there is a covering $\{V_{n,i}\}_{i\in I}$ of $\Spa(R_n)^\an$ by Tate rational domains such that $\cl{O}(V_{n,i}) \to \cl{O}(V_{m,i})$ is injective and satisfies the condition in \Cref{lem:invanalogue}, then every Tate rational domain $U_n\subset \Spa(R_n)^\an$ satisfies the same conditions. 
\end{lem}
\begin{proof}
    Since $R_m$ is finite over $R_n$, we have $\cl{O}(U_m)=\cl{O}(U_n)\otimes_{R_n} R_m$. First, suppose that $U_n \subset V_{n,i}$ for some $i\in I$. Then, we have $\cl{O}(U_m)=\cl{O}(U_n)\otimes_{\cl{O}(V_{n,i})}\cl{O}(V_{m,i})$. Since $V_{n,i}$ is strongly Noetherian, $\cl{O}(V_{n,i}) \to \cl{O}(U_n)$ is flat (see \cite[Lemma 1.7.6]{Hub96}). Thus, $\cl{O}(U_n) \to \cl{O}(U_m)$ is injective. It also satisfies the condition in \Cref{lem:invanalogue} since that condition is stable under base change. Thus, the case where $U_n \subset V_{n,i}$ is done. 
    
    Now, by refining the covering $\{V_{n,i}\}_{i\in I}$, we may suppose that $U_n$ is covered by a collection $\{V_{n,j}\}_{j \in J}$ for some $J\subset I$. Since $\cl{O}(U_n) \to \prod_{j\in J} \cl{O}(V_{n,j})$ is injective and the same holds for $U_m$, $\cl{O}(U_n) \to \cl{O}(U_m)$ is injective. Moreover, we have $\cl{O}(U_n)=\cl{O}(U_m) \cap \prod_{j \in J} \cl{O}(V_{n,j})$ in $\prod_{j \in J} \cl{O}(V_{m,j})$, so the condition in \Cref{lem:invanalogue} is satisfied by \Cref{lem:invsubalg}. 
\end{proof}

As a refinement of the condition (3) in \Cref{defi:verygoodcov}, we next put the following condition: 
\begin{quote}
    (perfd) $\lim_{n \geq 0} \Spd(R_n)$ is represented by a complete $I$-adic perfectoid $R$-algebra $R_\infty$. 
\end{quote}

By \Cref{lem:perfdrep}, we get a homomorphism $R_n\to R_\infty$ for every $n\geq 0$. Since $R_n$ is reduced and excellent, and $R_\infty$ is a complete adic perfectoid ring, we have a sequence of stably uniform analytic adic spaces (see \Cref{lem:excellentst} and \Cref{prop:Tatedomisperfd})
\[\Spa(R_\infty)^\an \to \cdots \to \Spa(R_n)^\an \to \cdots \to \Spa(R_1)^\an \to \Spa(R_0)^\an = \Spa(R)^\an.\]
By construction, the $v$-sheafification of this sequence is a limit sequence. We will impose that this is a limit sequence of analytic adic spaces. First, we put a density condition:
\begin{quote}
    (density) for every $n\geq 0$ and every Tate rational domain $U_n \subset \Spa(R_n)^\an$, the image of $\colim_{n \leq m < \infty} \cl{O}(U_m) \to \cl{O}(U_\infty)$ is dense. 
\end{quote}

Another condition on the sequence is the existence of functorial and uniform sections. It is one of the essential conditions and is a refinement of the condition (2) in \Cref{defi:verygoodcov}. 
\begin{quote}
    (unifsec) for every $n\geq 0$ and every Tate rational domain $U_n \subset \Spa(R_n)^\an$, we have a continuous $\cl{O}(U_n)$-linear section $s_{n,U_n} \colon \cl{O}(U_\infty) \to \cl{O}(U_n)$ of the pullback. The family $\{s_{n,U_n}\}$ is functorial in $U_n$ for fixed $n$, and uniform in $n$ for fixed $U_n$ in a sense that there is an open ideal $\cl{J} \subset \cl{O}^+(U_\infty)$ that maps to $\cl{O}^+(U_m)$ under $s_{m,U_m}$ for every $m\geq n$. 
\end{quote}

From now on, a continuous section $s_{n,U_n}$ of the pullback is always assumed to be $\cl{O}(U_n)$-linear. We will show that the above two conditions are analytically local.  

\begin{lem} \label{lem:densitys}
    Let $U_n \subset \Spa(R_n)^\an$ be a Tate rational domain with a uniform family of continuous sections $\{s_{m,U_m}\}$ for $m\geq n$. Then, the image of $\colim_{n \leq m < \infty} \cl{O}(U_m) \to \cl{O}(U_\infty)$ is dense if and only if $\{s_{m,U_m}(a)\}_{m\geq n}$ converges to $a$ in $\cl{O}(U_\infty)$ for every $a\in \cl{O}(U_{\infty,})$.  
\end{lem}
\begin{proof}
    The if direction is obvious. We show the only if direction. Let $a\in \cl{O}(U_\infty)$ be an arbitrary element. By assumption, there is a sequence $\{b_{m}\}_{m\geq n}$ with $b_{m}\in \cl{O}(U_m)$ that converges to $a$ in $\cl{O}(U_\infty)$. Let $\cl{J} \subset \cl{O}^+(U_\infty)$ be an open ideal that maps to $\cl{O}^+(U_m)$ under $s_{m,U_m}$ for every $m \geq n$. For every $N\geq 0$, there is $k\geq 0$ such that for every $m\geq k$, we have $a - b_{m} \in I^N\cdot \cl{J}$. Then, we have $s_{m,U_m}(a) - b_m \in I^N\cdot \cl{O}^+(U_m)$ and $a-s_{m,U_m}(a) \in I^N\cdot \cl{O}^+(U_\infty)$ for $m\geq k$. Since $\cl{O}^+(U_\infty)$ is $I$-adic, $\{s_{m,U_m}(a)\}$ converges to $a$. 
\end{proof}

\begin{lem} \label{lem:densunifislocal}
    The conditions (density) and (unifsec) hold if and only if there is a finite covering $\{V_{0,i}\}_{i\in I}$ of $\Spa(R)^\an$ by Tate rational domains with continuous sections $\{s_{n,V_{n,i}}\}_{n\geq 0, i \in I}$ such that 
    \begin{enumerate}
        \item the family $\{s_{n,V_{n,i}}\}_{n\geq 0}$ is uniform in $n$ for each $i\in I$, 
        \item $s_{n,V_{n,i}}$ and $s_{n,V_{n,j}}$ induce the same continuous section $\cl{O}(V_{\infty,i} \cap V_{\infty, j}) \to \cl{O}(V_{n,i} \cap V_{n,j})$ for every $i, j \in I$, and
        \item $\{s_{n,V_{n,i}}(a)\}_{n\geq 0}$ converges to $a$ in $\cl{O}(V_{\infty,i})$ for every $a\in \cl{O}(V_{\infty, i})$ and $i\in I$. 
    \end{enumerate}
\end{lem}

Before going to the proof, we explain the condition (2) more precisely. For each $i\in I$, we define $s_{n,U_n}$ for every Tate rational domain $U_n \subset V_{n,i}$. Let $\varpi \in \cl{O}^+(V_{n,i})$ be a topologically nilpotent unit. By the universality and uniformity of rational localization, we have $\cl{O}(U_\infty) \cong (\cl{O}^+(V_{\infty,i})\widehat{\otimes}_{\cl{O}^+(V_{n,i})} \cl{O}^+(U_n)/\overline{(\varpi^\infty \mathchar`- \torsion)})[\tfrac{1}{\varpi}]$. Take an open ideal $\cl{J}_i \subset \cl{O}^+(V_{\infty,i})$ as in (unifsec). Then, $s_{n,V_{n,i}} \colon \cl{J}_i \to \cl{O}^+(V_{n, i})$ induces 
\[
    \cl{J}_i \widehat{\otimes}_{\cl{O}^+(V_{n,i})} \cl{O}^+(U_n) \to \cl{O}^+(U_n)
\]
via base change, which induces a continuous section $s_{n,U_n}$ after inverting $\varpi$. The condition (2) says that this construction gives the same $s_{n,V_{n,i}\cap V_{n,j}}$ for different $i,j\in I$. Under this condition, $s_{n,U_n}$ is independent of the choice of $i\in I$ such that $U_n \subset V_{n,i}$. 

\begin{proof}
    The only if direction follows from \Cref{lem:densitys}. We will prove the converse direction. 

    By the above remark, the condition (2) enables us to define a functorial family of continuous sections $s_{n,U_n}$ for $U_n \subset V_{n,i}$ for some $i\in I$. For every $m\geq n$, the image of $\cl{J}\widehat{\otimes}_{\cl{O}^+(V_{m,i})} \cl{O}^+(U_m)$ contains the image of $\cl{J}\widehat{\otimes}_{\cl{O}^+(V_{n,i})} \cl{O}^+(U_n)$ in $\cl{O}^+(U_\infty)$, so the family $\{s_{n,U_n}\}_{n\geq 0}$ is uniform in $n$. Moreover, (density) holds for $U_n$ since we have $\cl{O}(U_\infty) \cong (\cl{O}^+(V_{\infty,i})\widehat{\otimes}_{\cl{O}^+(V_{n,i})} \cl{O}^+(U_n)/\overline{(\varpi^\infty \mathchar`- \torsion)})[\tfrac{1}{\varpi}]$. Thus, (density) and (unifsec) hold for $U_n \subset V_{n,i}$ for some $i\in I$. 

    By refining the covering $\{V_{n,i}\}_{i\in I}$, it is enough to treat the case where $U_n$ can be covered by $V_{n,j}$ for some $J\subset I$. Since the family $\{s_{n,V_{n,j}}\}_{j\in J}$ is functorial in $V_{n,j}$, we can glue it to a continuous section $s_{n,U_n}$. Moreover, the uniformity of $s_{n,U_n}$ in $n$ follows from that of $s_{n,V_{n,j}}$ for every $j\in J$. Since $\cl{O}(U_\infty)$ is equipped with the subspace topology of $\prod_{j\in J}\cl{O}(V_{\infty,j})$, the condition (3) for $U_n$ follows from the condition (3) for all $V_{n,j}$. Thus, (density) and (unifsec) hold for $U_n$. 
\end{proof}

\begin{defi}
    A pair $(R_\bullet, \Gamma_\bullet)$ is a good cover of a reduced excellent complete adic ring $R$ over $\bb{Z}_p$ if it satisfies (quot), (perfd), (density) and (unifsec). 
\end{defi}

\begin{lem} \label{lem:verygoodisgood}
    A very good cover $(R_\bullet, \Gamma_\bullet)$ of a reduced excellent complete adic ring $R$ over $\bb{Z}_p$ is a good cover of $R$. 
\end{lem}
\begin{proof}
    The condition (3) in \Cref{defi:verygoodcov} implies that the limit $\lim_{n \geq 0} \Spd(R_n)$ is represented by $R_\infty = \colim_{n \geq 0}^\wedge R_n$. Thus, $(R_\bullet, \Gamma_\bullet)$ satisfies (perfd). For every $n\geq 0$ and every Tate rational domain $U_n \subset \Spa(R_n)^\an$, $R_n\to \cl{O}(U_n)$ is flat by \cite[Lemma 1.7.6]{Hub96} and $\cl{O}(U_n) \to \cl{O}(U_m)$ is the base change of $R_n\to R_m$ for every $n\leq m <\infty$. Thus, (quot) holds by the condition (1) in \Cref{defi:verygoodcov}. 
    
    We will verify (density) and (unifsec). By \Cref{lem:appdirect} (2), there is a continuous section $s_n \colon R_{\infty} \to R_n$ of the natural inclusion. Let $U_n \subset \Spa(R_n)^\an$ be a Tate rational domain and let $\varpi \in \cl{O}^+(U_n)$ be a topologically nilpotent unit. Since we have $\cl{O}(U_\infty) \cong (R_\infty \widehat{\otimes}_{R_n} \cl{O}^+(U_n))[\tfrac{1}{\varpi}]$, (density) holds for $U_n$ and $s_n$ induces a continuous section $s_{n,U_n} \colon \cl{O}(U_\infty) \to \cl{O}(U_n)$. As the image of $R_\infty \widehat{\otimes}_{R_n} \cl{O}^+(U_n)$ in $\cl{O}^+(U_\infty)$ maps to $\cl{O}^+(U_n)$ under $s_{n,U_n}$, we see that $\{s_{n,U_n}\}_{n\geq 0}$ is uniform in $n$. Thus, $(R_\bullet, \Gamma_\bullet)$ satisfies (unifsec). 
\end{proof}

\begin{lem} \label{lem:goodcovetloc}
    Let $R$ be a reduced excellent complete adic ring with an ideal of definition $I\subset R$. Let $S$ be an $I$-adically complete $I$-completely \'{e}tale $R$-algebra. If $R$ admits a good cover, $S$ admits a good cover. 
\end{lem}
\begin{proof}
    Let $(R_\bullet, \Gamma_\bullet)$ be a good cover of $R$. Let $S_\bullet = R_\bullet \otimes_{R} S$. By Elkik's algebraization theorem (see \cite{Elk73}), $S_n$ is isomorphic to the $I$-adic completion of an \'{e}tale $R_n$-algebra. For every $0\leq n < \infty$, $S_n$ is a reduced finite $S$-algebra.

    Now, $S_\infty  = R_\bullet \widehat{\otimes}_{R} S$ is a complete adic perfectoid ring by \Cref{lem:etperfd} and represents $\Spd(R_\infty) \times_{\Spd(R)} \Spd(S)=\lim_{n\geq 0} \Spd(S_n)$. Since the conditions (quot), (density) and (unifsec) are local by \Cref{lem:quotislocal} and \Cref{lem:densunifislocal}, it is enough to check these conditions for sufficiently small Tate rational domains $U_n \subset \Spa(S_n)^\an$. In particular, we may assume that $U_n \to \Spa(R_n)^\an$ factors through a Tate rational domain $V_n \subset \Spa(R_n)^\an$ and is a composition of a finite \'{e}tale morphism to $V_n$ and a rational localization. Then, $U_\infty \cong V_{\infty} \times_{V_n} U_n$ and $s_{n,V_n}$ induces a continuous section $s_{n,U_n}$ as in \Cref{lem:densunifislocal}. It is easy to see that this construction is functorial and satisfies the density condition as in \Cref{lem:densunifislocal}. Moreover, the latter condition of (quot) follows from the fact that $\cl{O}(V_n) \to \cl{O}(U_n)$ is flat. 
\end{proof}


\begin{lem} \label{lem:analyticfinet}
    Let $R$ be a reduced excellent complete adic ring with an ideal of definition $I\subset R$. Let $S$ be a reduced finite $R$-algebra that is finite \'{e}tale over $R$ outside $V(I) \subset \Spec(R)$. If $R$ admits a good cover, $S$ admits a good cover. 
\end{lem}
\begin{proof}
    Let $(R_\bullet, \Gamma_\bullet)$ be a good cover of $R$. Let $S_n$ be the reduction of $R_n{\otimes}_{R} S$ for every $0\leq n < \infty$ and let $S^0_\infty$ be the perfectoidization of $R_\infty \otimes_R S$. By \cite[Theorem 10.9]{BS22}, $S^0_{\infty}$ is a perfectoid ring and $R_\infty\otimes_R S \to S_{\infty}^0$ is an isomorphism outside $V(I)$. Let $S_\infty=(S_\infty^0)^\wedge$ be the $I$-adic completion. By construction, $\Spd(S_\infty) \cong \Spd(R_\infty)\times_{\Spd(R)} \Spd(S)$ and $(S_\bullet,\Gamma_\bullet)$ satisfies (perfd). By the same argument as in \Cref{lem:goodcovetloc}, we see that $(S_\bullet, \Gamma_\bullet)$ satisfies (quot), (density) and (perfd). 
\end{proof}

Thanks to \Cref{lem:goodcovetloc}, the following notion is purely local in the Zariski topology. 

\begin{defi}
    Let $\mfr{X}$ be a reduced excellent formal scheme over $\bb{Z}_p$. We say that $\mfr{X}$ admits a good cover if $\mfr{X}$ is Zariski locally isomorphic to the formal spectrum of a reduced excellent complete adic ring admitting a good cover. 
\end{defi}

\subsection{Affine representability}
The aim of this section is to prove the following theorem. 

\begin{thm} \label{thm:affrep}
    Let $R$ be a reduced excellent complete adic ring admitting a good cover $(R_\bullet, \Gamma_\bullet)$ and let $I\subset R$ be an ideal of definition.    Let $S_\infty$ be a distinguished complete $I$-adic perfectoid $R_\infty$-algebra. Suppose that the following conditions hold. 
    \begin{enumerate}
        \item $\Gamma_\infty$ acts on $S_\infty$ compatibly with the action on $R_\infty$. 
        \item $\Spa(S_\infty)^\an \to \Spa(R_\infty)^\an$ is an open immersion. 
        \item $S_{\infty,\red}$ is perfectly of finite type over $R_{\infty,\red}$.
    \end{enumerate}
    Then, there is a good cover $(S_\bullet, \Gamma_\bullet)$ of a reduced excellent distinguished complete $I$-adic ring $S$ with the following conditions. 
    \begin{enumerate}
        \item There is a $\Gamma_\bullet$-equivariant homomorphism $R_\bullet \to S_\bullet$ such that for every $n\geq 0$, $S_n$ is topologically of finite type over $R_n$ and $\Spa(S_n)^\an\to \Spa(R_n)^\an$ is an open immersion. 
        \item There is a $\Gamma_\infty$-equivariant isomorphism $\Spd(S_\infty) \cong \lim_{n\geq 0} \Spd(S_n)$ such that the induced map $\Spd(S_\infty)\to \Spd(R_\infty)$ is represented by the given map $R_\infty \to S_\infty$. 
    \end{enumerate}
\end{thm}

\begin{proof}

For every $0\leq n \leq m \leq \infty$, the map $\Spa(R_m) \to \Spa(R_n)$ is denoted by $\pi_m^n$. First, we construct the open subspace of $\Spa(R_n)^\an$ that should be equal to $\Spa(S_n)^\an$. Let $U_n \subset \lvert \Spa(R_n)^\an \rvert$ be the image of $\lvert \Spa(S_\infty)^\an \rvert \subset \lvert \Spa(R_\infty)^\an \rvert$ under $\pi_\infty^n$.

\begin{lem} \label{lem:condUn}
    The subset $U_n\subset \lvert \Spa(R_n)^\an \rvert$ is open and $(\pi_m^n)^{-1}(U_n)=U_m$ for every $m \geq n$. 
\end{lem}
\begin{proof}
    By the condition (quot), $\Spd(R_n)$ is a geometric quotient of $\Spd(R_\infty)$ by $\Gamma_\infty^n$. Since $U_\infty \subset \lvert \Spa(R_\infty) \rvert$ is stable under $\Gamma_\infty$, we see that $(\pi_m^n)^{-1}(U_n)=U_m$ for every $m \geq n$. Since $\pi_\infty^n$ is surjective and $U_\infty$ is open, it follows from \cite[Proposition 12.9]{Sch17} that $U_n$ is open. 
\end{proof}

From now on, $U_n$ is identified with the open subspace of $\Spa(R_n)^\an$. Since $\Spa(S_\infty)^\an$ is quasicompact, $U_n$ is quasicompact. Thus, we can take a finite covering $\{V_{0,j}\}_{j\in J}$ of $U_0$ by Tate rational domains of $\Spa(R)^\an$. Its pullback $\{V_{n,j}\}_{j\in J}$ to $\Spa(R_n)^\an$ is a finite covering of $U_n$ by Tate rational domains of $\Spa(R_n)^\an$. Since the family of continuous sections $\{s_{n,V_{n,j}}\}$ is functorial, we get a continuous section $s_{n,U_n}\colon \cl{O}(U_\infty) \to \cl{O}(U_n)$. Moreover, since the family of continuous sections $\{s_{n,V_{n,j}}\}$ is uniform, there is an open ideal $\cl{J} \subset \cl{O}^+(U_\infty)$ such that $s_{n,U_n}(\cl{J}) \subset \cl{O}^+(U_n)$ for every $n\geq 0$. 

\begin{lem} \label{lem:densityU}
    The image of $\colim_{n<\infty} \cl{O}(U_n) \to \cl{O}(U_\infty)$ is dense. 
\end{lem}
\begin{proof}
    It is enough to see that $\{s_{n,U_n}(a)\}$ converges to $a$ for every $a \in \cl{O}(U_\infty)$. It follows from (density) and (unifsec) for each $V_{0, j}$ by \Cref{lem:densitys}. 
\end{proof}

Note that the existence of a continuous section implies that $\cl{O}(U_n)\to \cl{O}(U_\infty)$ is a closed embedding. Moreover, $S_\infty \to \cl{O}(U_\infty)$ is an open embedding by \Cref{prop:thickan}. 

Let $S_n=\cl{O}(U_n)\cap S_\infty$ where the intersection is taken inside $\cl{O}(U_\infty)$. Since $ \cl{O}^{\circ \circ}(U_\infty) \subset S_\infty \subset \cl{O}^+(U_\infty)$, we have $\cl{O}^{\circ \circ}(U_n) \subset S_n\subset \cl{O}^+(U_n)$. By \Cref{lem:densityU}, $\colim_{n<\infty} S_n \to S_\infty$ has a dense image. In particular, the completion of $\colim_{n<\infty} S_n$ with respect to the subspace topology from $S_\infty$ is isomorphic to $S_\infty$. This subspace topology can be identified as follows. 

\begin{lem} \label{lem:subspIadic}
    The subspace topology on $\colim_{n<\infty} S_n$ is the $I$-adic topology. Moreover,  the subspace topology on $S_n$ is the $I$-adic topology for every $n\geq 0$. 
\end{lem}
\begin{proof}
    Since $S_\infty$ is distinguished, $\cl{O}^+(U_\infty)$ is $I$-adic by \Cref{prop:thickan}. For every $n, m\geq 0$, we have $(I^m\cdot \cl{J}) \cap \cl{O}^+(U_n) \subset I^m\cdot \cl{O}^+(U_n) \subset (I^m\cdot \cl{O}^+(U_\infty))\cap \cl{O}^+(U_n)$. The first inclusion follows from the property of $s_{n,U_n}$. Thus, the subspace topology on $S_n$ is the $I$-adic topology as $S_n \supset \cl{O}^{\circ \circ}(U_n)$. The claim for $\colim_{n<\infty} S_n$ follows by the same argument. 
\end{proof}

As a consequence, we can verify the condition (perfd) on $S_\bullet$. 

\begin{cor} \label{cor:Slimit}
    $\Spd(S_\infty) \cong \lim_{n <\infty} \Spd(S_n)$. 
\end{cor}
\begin{proof}
    For every perfectoid Huber pair $(A,A^+)$ in characteristic $p$, a map from $\Spa(A,A^+)$ to $\lim_{n <\infty} \Spd(S_n)$ corresponds to an untilt $(A^\sharp, \iota)$ and a compatible family of continuous homomorphisms $S_n\to A^{\sharp +}$. Since $A^{\sharp +}$ is complete, this family corresponds to a map from the $I$-adic completion of $\colim_{n<\infty } S_n$ to $A^{\sharp +}$. Thus, we have $\Spd(S_\infty) \cong \lim_{n <\infty} \Spd(S_n)$. 
\end{proof}

Since $U_\infty$ is stable under $\Gamma_\infty$, $U_n$ is stable under $\Gamma_n$. Thus, $\Gamma_\infty$ acts on $\cl{O}(U_n)$ factoring through $\Gamma_n$. Since $S_\infty$ is stable under $\Gamma_\infty$, $\Gamma_\infty$ acts on $S_n$ factoring through $\Gamma_n$. It means that $\Gamma_\bullet$ acts on $S_\bullet$. By construction, we have a $\Gamma_\bullet$-equivariant homomorphism $R_\bullet \to S_\bullet$. We next verify the condition (quot) on $(S_\bullet, \Gamma_\bullet)$.

\begin{lem} \label{lem:Snquot}
    For every $0\leq n \leq m<\infty$, $S_n\to S_m$ is injective and satisfies the condition in \Cref{lem:invanalogue} with respect to the $\Gamma_m^n$-action. 
\end{lem}
\begin{proof}
    By construction, $S_n\to S_m$ is injective. It is enough to show the latter claim. By the condition (quot), $\prod_{j\in J} \cl{O}(V_{n,j}) \to \prod_{j\in J} \cl{O}(V_{m,j})$ satisfies the condition in \Cref{lem:invanalogue} with respect to the $\Gamma_m^n$-action. Since we have $\cl{O}^+(U_m) \cap \prod_{j\in J} \cl{O}(V_{n,j}) = \cl{O}^+(U_n)$, the claim follows from \Cref{lem:invsubalg}. 
\end{proof}

Thus, we verify the first condition of (quot) on $(S_\bullet, \Gamma_\bullet)$ by \Cref{prop:affquot}. In particular, we see from \Cref{prop:limquot} that $\Spd(S_n)$ is a geometric quotient of $\Spd(S_\infty)$ by $\Gamma_\infty^n$. 

\begin{cor} \label{cor:transsurj}
    For every $0\leq n \leq m \leq \infty$, $\Spd(S_m) \to \Spd(S_n)\times_{\Spd(R_n)} \Spd(R_m)$ is surjective. 
\end{cor}
\begin{proof}
    Since $\Spd(S_m) \to \Spd(S_n)\times_{\Spd(R_n)} \Spd(R_m)$ is qcqs by \cite[Lemma 2.26]{Gle24}, it is enough to show that it is surjective on points. By \Cref{lem:bcquot}, $\Spd(S_n)\times_{\Spd(R_n)} \Spd(R_m) \to \Spd(S_n)$ is a geometric quotient by $\Gamma_m^n$. Thus, for every geometric point $x\in \Spd(S_n)$, both fibers of $x$ in $\Spd(S_m)$ and in $\Spd(S_n)\times_{\Spd(R_n)} \Spd(R_m)$ consist of a single $\Gamma_m^n$-orbit. The claim follows since $\Spd(S_m) \to \Spd(S_n)\times_{\Spd(R_n)} \Spd(R_m)$ is $\Gamma_m^n$-equivariant. 
\end{proof}

\begin{cor} \label{cor:isomtoU}
    The map $\Spd(S_n)^\an  \to \Spd(R_n)^\an $ is an isomorphism onto $U_n$. 
\end{cor}
\begin{proof}
    By \Cref{lem:condUn}, $U_n$ is a geometric quotient of $U_\infty$ by $\Gamma_\infty^n$. Thus, $\Spd(S_n)^\an \to \Spd(R_n)^\an$ factors through $U_n$. Since $\Spd(S_n) \to \Spd(R_n)$ is qcqs by \cite[Lemma 2.26]{Gle24}, $\Spd(S_n)^\an \to U_n$ is qcqs. Since both spaces are geometric quotients of $U_\infty$ by $\Gamma_\infty^n$, it is an isomorphism by \Cref{prop:geomquotiscat}. 
\end{proof}

Next, we will verify that $S_n$ is topologically of finite type over $R_n$. For this, we apply \Cref{thm:topfin} to $S_n$. The first condition is verified in \Cref{lem:subspIadic} and the second condition follows from \Cref{cor:isomtoU} by \cite[Proposition 15.4]{Sch17}. It is enough to verify the last condition. 

\begin{lem}
    $(S_{n,\red})^\perf$ is perfectly of finite type over $(R_{n,\red})^\perf$. 
\end{lem}
\begin{proof}
    Since $S_\infty$ is isomorphic to the $I$-adic completion of $\colim_{n<\infty} S_n$, we have $S_{\infty,\red} \cong \colim_{n<\infty} S_{n,\red}$. Moreover, by the condition (perfd) and \cite[Proposition 18.3.1]{SW20}, we have $R_{\infty, \red} \cong \colim_{n<\infty} (R_{n,\red})^\perf$. Since $S_{\infty,\red}$ is perfectly of finite type over $R_{\infty, \red}$, for some $m\geq 0$, there is a homomorphism $P_m=R_{m,\red}[T_1,\ldots,T_r] \to S_{m,\red}$ such that if we set $P_n = R_{n,\red}[T_1,\ldots,T_r] \to S_{n,\red}$ as the base change of the map for $n \geq m$, $P_\infty^\perf \to S_{\infty,\red}$ is surjective. Then, the composition
    $\Spd(S_{\infty,\red}) \to \Spd(S_{n,\red})\times_{\Spd(R_{n,\red})} \Spd(R_{\infty,\red}) \to \Spd(P_n)\times_{\Spd(R_{n,\red})} \Spd(R_{\infty,\red}) \cong \Spd(P_\infty)$
    is a closed immersion for $n\geq m$. Then, $\Spd(S_{\infty,\red}) \to \Spd(S_{n,\red})\times_{\Spd(R_{n,\red})} \Spd(R_{\infty,\red})$ is injective and surjective by \Cref{cor:transsurj}, so it is an isomorphism. Since $\Spd(R_{\infty,\red}) \to \Spd(R_{n,\red})$ is a $v$-cover, $\Spd(S_{n,\red}^\perf) \to \Spd(P_n^\perf)$ is a closed immersion by \cite[Proposition 10.11]{Sch17}. By \cite[Lemma 3.31]{Gle24}, $P_n^\perf\to S_{n,\red}^\perf$ is surjective. Thus, $S_{n,\red}^\perf$ is perfectly of finite type over $R_{n,\red}^\perf$ for $n\geq m$.

    We treat the case with $n<m$. Since $R_m$ is finite over $R_n$, $S_{m,\red}^\perf$ is perfectly of finite type over $R_{n,\red}^\perf$. Since $\Gamma_m^n$ is finite, we can take a finitely generated $R_{n,\red}$-subalgebra $S^0 \subset S_{m,\red}^\perf$ stable under $\Gamma_m^n$ so that $S_0^\perf \cong S_{m,\red}^\perf$. Since $R_{n,\red}$ is Noetherian, $(S^0)^{\Gamma_m^n}$ is of finite type over $R_{n,\red}$, so $(S^0)^{\Gamma_m^n,\perf}$ is perfectly of finite type over $R_{n,\red}^\perf$. Since it represents a geometric quotient of $\Spd(S_{m,\red}^\perf)$ by $\Gamma_m^n$ by \Cref{prop:finquot}, $S_{n,\red}^\perf$ is isomorphic to $(S^0)^{\Gamma_m^n,\perf}$ by \cite[Lemma 2.26]{Gle24}, \Cref{prop:geomquotiscat} and \cite[Proposition 18.3.1]{SW20}. Thus, we get the claim. 
\end{proof}

Now, it follows from \Cref{thm:topfin} that $S_n$ is reduced and excellent, topologically of finite type over $R_n$ and $\Spa(S_n)^\an \to U_n$ is an isomorphism. Since $S_\infty$ is distinguished, $S_n$ is distinguished by construction. Let $S=S_0$. By \Cref{lem:Snquot}, $S_n$ is integral over $S$. Since $R_n$ is a finite $R$-algebra, $S_n$ is topologically of finite type over $R$, so $S_n$ is finite over $S$. 

By \Cref{lem:quotislocal} and \Cref{lem:densunifislocal}, we see that $(S_\bullet, \Gamma_\bullet)$ satisfies (quot), (density) and (unifsec). Thus, we see that $(S_\bullet, \Gamma_\bullet)$ is a good cover. 

\end{proof}

Here, we record an explicit construction of $(S_\bullet, \Gamma_\bullet)$ for a reference.

\begin{cor} \label{cor:expaffrep}
    In \Cref{thm:affrep}, $(S_\bullet, \Gamma_\bullet)$ can be taken so that $S_n = \cl{O}(U_n)\cap S_\infty$ in $\cl{O}(U_\infty)$.
\end{cor}

\subsection{Maximal good covers} \label{ssec:maxlgoodcov}

In this section, we study the properties of good covers satisfying the condition in \Cref{cor:expaffrep}. Here, we repeat the condition. 

\begin{quote}
    (maximal) $R_\infty$ is distinguished and $R_n \to R_\infty \cap \Gamma(\Spa(R_n)^\an,\cl{O}^+)$ is an isomorphism for every $n\geq 0$. 
\end{quote}

The intersection is taken inside $ \Gamma(\Spa(R_\infty)^\an, \cl{O}^+)$. Note that $R_\infty \to \Gamma(\Spa(R_\infty)^\an, \cl{O}^+)$ is an open immersion by \Cref{prop:thickan} and $\Gamma(\Spa(R_n)^\an, \cl{O}^+)\to \Gamma(\Spa(R_\infty)^\an, \cl{O}^+)$ is a closed immersion by the condition (unifsec). 

\begin{defi}
    A pair $(R_\bullet, \Gamma_\bullet)$ is a maximal good cover of a reduced excellent distinguished complete adic ring $R$ over $\bb{Z}_p$ if it is a good cover satisfying (maximal). 
\end{defi}

We say that a subring $A$ of a ring $B$ in characteristic $p$ is purely inseparably integrally closed in $B$ if $x^p \in A$ implies $x\in A$ for every $x\in B$. 

\begin{lem} \label{lem:propoftmaxlgood}
    Let $(R_\bullet,\Gamma_\bullet)$ be a maximal good cover of a reduced excellent distinguished complete adic ring $R$. For every $n\geq 0$, we have $\Gamma(\Spa(R_n)^\an, \cl{O}^{\circ \circ}) \subset R_n \subset \Gamma(\Spa(R_n)^\an, \cl{O}^+)$, $R_n$ is distinguished, and $R_{n,\red}$ is purely inseparably integrally closed in  $\Gamma(\Spa(R_n)^\an, \cl{O}^+)_\red$.
\end{lem}

\begin{proof}
    The first claim follows from \Cref{prop:thickan}, and since $\Gamma(\Spa(R_n)^\an, \cl{O}^+)$ is $I$-torsion free, $R_n$ is $I$-torsion free. We prove the second claim. Let $x \in \Gamma(\Spa(R_n)^\an, \cl{O}^+)_\red$ be an element such that $x^p \in R_{n,\red}$. Take an arbitrary lift $y\in \Gamma(\Spa(R_n)^\an, \cl{O}^+)$ of $x$. Then, $y^p \in R_n$, so in particular, $y^p\in R_\infty$. Since $R_{\infty,\red}$ is perfect, we have $y\in R_\infty$. Thus, $y\in R_\infty \cap \Gamma(\Spa(R_n)^\an, \cl{O}^+)=R_n$ and $x \in R_{n,\red}$. 
\end{proof}

\begin{lem} \label{lem:maxlgoodcovZarloc}
    Let $(R_\bullet,\Gamma_\bullet)$ be a maximal good cover of a reduced excellent distinguished complete adic ring $R$. For every element $f\in R$, $(R_\bullet[\tfrac{1}{f}]^\wedge, \Gamma_\bullet)$ is a maximal good cover of $R[\tfrac{1}{f}]^\wedge$. Here, $(-)^\wedge$ denotes the $I$-adic completion with $I\subset R$ an ideal of definition. 
\end{lem}

\begin{proof}
    Let $R'_n=R_n[\tfrac{1}{f}]^\wedge$. Since $R_\infty$ is thick, $R'_\infty$ is thick. Let $R''_n=R'_\infty \cap \Gamma(\Spa(R'_n)^\an, \cl{O}^+)$. By applying \Cref{cor:expaffrep} to $R'_\infty$, we see that $(R''_\bullet, \Gamma_\bullet)$ is a maximal good cover. It is enough to show that a natural homomorphism $R'_n \to R''_n$ is an isomorphism. By \Cref{lem:propoftmaxlgood} and \Cref{lem:Oooinimg}, we have $\Gamma(\Spa(R'_n)^\an, \cl{O}^{\circ \circ}) \subset R'_n \subset R''_n \subset \Gamma(\Spa(R'_n)^\an, \cl{O}^+)$.

    Since $\Spd(R'_n)$ and $\Spd(R''_n)$ are both geometric quotients of $\Spd(R'_\infty)$ by $\Gamma_\infty^n$ that are qcqs over $\Spd(R)$, we have $\Spd(R''_n) \cong \Spd(R'_n)$ by \Cref{prop:geomquotiscat}. In particular, $R'_{n,\red} \hookrightarrow R''_{n,\red}$ is an isomorphism up to perfection. Since we have $R'_{n,\red} \hookrightarrow R''_{n,\red} \hookrightarrow \Gamma(\Spa(R'_n)^\an, \cl{O}^+)_\red$, it is enough to show that $R'_{n,\red}$ is purely inseparably integrally closed in $\Gamma(\Spa(R'_n)^\an, \cl{O}^+)_\red$. 

    By the proof of \Cref{lem:Oooinimg}, $R'_{n,\red} \hookrightarrow \Gamma(\Spa(R'_n)^\an, \cl{O}^+)_\red$ is isomorphic to $R_{n,\red}[\tfrac{1}{f}] \hookrightarrow \Gamma(\Spa(R_n)^\an, \cl{O}^+)_\red[\tfrac{1}{f}]$. Thus, the claim follows from \Cref{lem:propoftmaxlgood}. 
\end{proof}

\begin{lem}
    Let $R$ be a reduced excellent $I$-adically complete ring. Let $S$ be a finite \'{e}tale $R$-algebra. If $R$ admits a maximal good cover, then $S$ admits a maximal good cover. 
\end{lem}
\begin{proof}
    Let $(R_\bullet, \Gamma_\bullet)$ be a maximal good cover of $R$. Then, $S_n = R_n \otimes_R S$ is reduced and $S_\infty = R_\infty \otimes_R S$ is a complete adic perfectoid ring by \Cref{lem:etperfd}. By the proof of \Cref{lem:analyticfinet}, $(S_\bullet, \Gamma_\bullet)$ is a good cover. It is enough to verify the condition (maximal) for $(S_\bullet, \Gamma_\bullet)$. 

    Since $R_\infty$ is distinguished, $S_\infty$ is also distinguished. Since $R_\infty \cap \Gamma(\Spa(R_n)^\an, \cl{O}) = R_\infty \cap \Gamma(\Spa(R_n)^\an, \cl{O}^+)$ and $\Gamma(\Spa(S_n)^\an, \cl{O}) \cong \Gamma(\Spa(R_n)^\an, \cl{O}) \otimes_R S$, we have 
    \[
        S_n \cong R_n \otimes_R S \cong (R_\infty \cap \Gamma(\Spa(R_n)^\an, \cl{O})) \otimes_R S \cong S_\infty \cap \Gamma(\Spa(S_n)^\an, \cl{O}). 
    \]
    Thus, $(S_\bullet, \Gamma_\bullet)$ satisfies (maximal). 
\end{proof}

Thanks to \Cref{lem:maxlgoodcovZarloc}, the following notion is purely local in the Zariski topology. 

\begin{defi}
    Let $\mfr{X}$ be a reduced excellent distinguished formal scheme over $\bb{Z}_p$. We say that $\mfr{X}$ admits a maximal good cover if $\mfr{X}$ is Zariski locally isomorphic to the formal spectrum of a reduced excellent distinguished complete adic ring admitting a maximal good cover. 
\end{defi}

\begin{lem} \label{lem:uniqmapmaxlgood}
    Let $\mfr{X}$ be a reduced excellent distinguished formal scheme over $\bb{Z}_p$ admitting a maximal good cover. Let $\mfr{Y}$ be a Noetherian formal scheme over $\bb{Z}_p$ and let $f\colon \mfr{X}^\diamond \to \mfr{Y}^\diamond$ be a map over $\Spd(\bb{Z}_p)$ such that the restriction of $f$ to $(\mfr{X}^\an)^\diamond$ is represented by a morphism $g\colon \mfr{X}^\an \to \mfr{Y}^\ad$ of adic spaces. Then, $f$ is represented by a unique morphism $\mfr{X}\to \mfr{Y}$ and its restriction to $\mfr{X}^\an$ is equal to $g$. 
\end{lem}
\begin{proof}
    By following the argument given in the proof of \cite[Theorem 2.16]{AGLR22}, we may assume that $\mfr{X}$ and $\mfr{Y}$ are affine. Let $\mfr{X}=\Spf(R)$ and $\mfr{Y}=\Spf(A)$ with $R$ a reduced excellent distinguished complete adic ring admitting a maximal good cover $(R_\bullet,\Gamma_\bullet)$. By \Cref{lem:perfdrep}, the composition $\Spd(R_\infty) \to \Spd(R) \xrightarrow{f} \Spd(A)$ is represented by a continuous homomorphism $A\to R_\infty$. The induced morphism $\Spa(R_\infty)^\an \to \Spa(A)$ is equal to the composition $\Spa(R_\infty)^\an \to \Spa(R)^\an \xrightarrow{g} \Spa(A)$ since $\Spd(R_\infty)^\an \to \Spd(A)$ is uniquely represented by a morphism $\Spa(R_\infty)^\an \to \Spa(A)$ as $\Spa(R_\infty)^\an$ is a perfectoid space. In particular, $A$ maps to $R_\infty \cap \Gamma(\Spa(R)^\an, \cl{O}^+)=R$. Since $\Spd(R_\infty) \to \Spd(R)$ is surjective, this homomorphism $A\to R$ represents $f$. Since $R\to R_\infty$ is injective, the uniqueness of $f$ follows from \Cref{lem:perfdrep}. By construction, the restriction of $f$ to $\Spa(R)^\an$ is equal to $g$. 
\end{proof}

As a corollary, we obtain a certain uniqueness of the representability of $v$-sheaves. 

\begin{cor} \label{cor:uniqrepexc}
    Let $\mfr{X}$ be a Noetherian formal scheme and let $\mfr{Y}$ and $\mfr{Z}$ be reduced excellent distinguished formal schemes over $\mfr{X}$ admitting maximal good covers. Suppose that $\mfr{Y}^\an \to \mfr{X}^\ad$ and $\mfr{Z}^\an \to \mfr{X}^\ad$ are open immersions of adic spaces and isomorphic to each other. Then, any isomorphism $\mfr{Y}^\diamond \cong \mfr{Z}^\diamond$ over $\mfr{X}^\diamond$ is representable by a unique isomorphism $\mfr{Y}\cong \mfr{Z}$ over $\mfr{X}$. 
\end{cor}
\begin{proof}
    Let $f \colon \mfr{Y}^\diamond \to \mfr{Z}^\diamond$ be an isomorphism over $\mfr{X}^\diamond$. Since the restriction of $f$ to $\mfr{Y}^\an$ is represented by $\mfr{Y}^\an \cong \mfr{Z}^\an \hookrightarrow \mfr{Z}^\ad$, $f$ is uniquely represented by a morphism $f_0 \colon \mfr{Y} \to \mfr{Z}$ lifting $\mfr{Y}^\an \cong \mfr{Z}^\an$ by \Cref{lem:uniqmapmaxlgood}. Moreover, $f_0$ is a morphism over $\mfr{X}$ again by \Cref{lem:uniqmapmaxlgood}. By applying the same argument to the inverse of $f$, we obtain the inverse of $f_0$ over $\mfr{X}$ and we see that $f_0$ is an isomorphism. 
\end{proof}

\subsection{Examples of $p$-adic formal schemes with maximal good covers} \label{ssc:padicexa}

In this section, we present an example of $p$-adically complete rings admitting maximal good covers other than \Cref{prop:smoothadmit}. The result of this section will not be used in the rest of the paper. In this section, the $p$-adic completion is denoted by $(-)^\wedge$. 

\begin{prop} \label{prop:rigetgoodcov}
    Let $R$ be a reduced excellent $p$-adically complete ring. Let $S$ be a $p$-torsion free finite type $R$-algebra that is integrally closed in $S[\tfrac{1}{p}]$ and $S[\tfrac{1}{p}]$ is standard \'{e}tale over $R[\tfrac{1}{p}]$. If $R$ admits a good cover, $S^\wedge$ admits a maximal good cover. 
\end{prop}

\begin{proof}
    Let $(R_\bullet, \Gamma_\bullet)$ be a good cover of $R$. For each $n\geq 0$, let $S_n$ be the integral closure of $S\otimes_R R_n$ in $(S\otimes_R R_n)[\tfrac{1}{p}]$. Since $S_n[\tfrac{1}{p}]$ is \'{e}tale over $R_n[\tfrac{1}{p}]$, $S_n[\tfrac{1}{p}]$ is reduced, so $S_n$ is a reduced finite $S$-algebra. We will show that $(S^\wedge_\bullet, \Gamma_\bullet)$ is a maximal good cover of $S^\wedge$. 

    Sine $S[\tfrac{1}{p}]$ is standard \'{e}tale over $R[\tfrac{1}{p}]$, we may write as $S[\tfrac{1}{p}] = R[x]_{(ph)}/(g)$ with $g$ a monic polynomial in $R[x]$ and $h\in R[x]$ such that $g'$ is invertible in $R[x]_{(ph)}$. We may assume that $x\in S$. Let $f_1,\ldots,f_r \in S$ be generators of $S$ as an $R$-algebra. For some $N\geq 0$, we have $(ph)^N f_i \in R[x]/(g)$ for every $1\leq i \leq r$. Since $(ph)^{-N} \in S[\tfrac{1}{p}]$, we may assume that $(ph)^N f_1$ is a power of $p$. Then, $\Spa(S^\wedge[\tfrac{1}{p}], S^\wedge) \to \Spa(R[x]_{(p)}/(g), R[x]/(g))$ is isomorphic to a rational localization $R(\tfrac{(ph)^N f_1,\ldots,(ph)^N f_r}{(ph)^N})$. 

    By \cite[Theorem 10.11]{BS22}, $\Spd(R_\infty) \times_{\Spd(R)} \Spd(R[x]/(g))$ is represented by a perfectoid ring $R'_\infty$. Then, $\Spd(S^\wedge[\tfrac{1}{p}], S^\wedge) \times_{\Spd(R[x]/(g))} \Spd(R'_\infty)$ is represented by an affinoid perfectoid pair $(S_\infty[\frac{1}{p}], S_\infty)$ obtained as a rational localization $R'_\infty\langle \tfrac{(ph)^N f_1,\ldots,(ph)^N f_r}{(ph)^N}\rangle$. In particular, the pullback of an \'{e}tale morphism $\Spd(S^\wedge[\tfrac{1}{p}], S^\wedge) \to \Spd(R[\tfrac{1}{p}], R)$ to $\Spd(R_\infty[\tfrac{1}{p}], R_\infty)$ is represented by an affinoid perfectoid pair $(S_\infty[\frac{1}{p}], S_\infty)$. 

    Since (density) and (unifsec) are local by \Cref{lem:densunifislocal}, we may argue as in \Cref{lem:goodcovetloc} to show that we have a functorial and uniform family of continuous sections $\{s_{n,U_{n}}\}$ with the density condition. By setting $U_n = \Spa(S_n^\wedge[\tfrac{1}{p}], S_n^\wedge)$, the density condition for $\{s_{n,U_n}\}$ implies that $S_\infty = \colim^\wedge S_n^\wedge$.  In particular, we have the condition (perfd). The condition (quot) can be verified as in \Cref{lem:goodcovetloc}. By construction, $(S_\bullet,\Gamma_\bullet)$ satisfies (maximal). 
\end{proof}

Let $E$ be a complete discrete valuation field over $\bb{Q}_p$ with perfect residue field and let $O_E$ be the ring of integers of $E$. 

\begin{cor} \label{cor:maxlgood}
    Let $R$ be a $p$-torsion free normal finite type $O_E$-algebra. If $R[\tfrac{1}{p}]$ is standard \'{e}tale over a polynomial algebra over $E$, $R^\wedge$ admits a maximal good cover.
\end{cor}
\begin{proof}
    By assumption, there is a homomorphism $P\to R$ from a polynomial algebra $P$ over $O_E$ such that $P[\tfrac{1}{p}] \to R[\tfrac{1}{p}]$ is standard \'{e}tale. Thus, the claim follows from \Cref{prop:exadmit} and \Cref{prop:rigetgoodcov}. 
\end{proof}

For applications to integral models of Shimura varieties, it is natural to hope for a generalization to the case where $R[\tfrac{1}{p}]$ is smooth over $E$. The main technical difficulty lies in the condition (unifsec). It implies that $(R^\wedge[\tfrac{1}{p}], R^\wedge)$ is sousperfectoid in the sense of \cite{HK22}, but it is not known yet whether every smooth affinoid $E$-algebra is sousperfectoid. Here, we record some weaker results. 

\begin{cor}
    Let $R$ be a $p$-torsion free normal finite type $O_E$-algebra. If $R[\tfrac{1}{p}]$ is smooth over $E$, there is a proper morphism $Y\to \Spec(R)$ such that $Y[\tfrac{1}{p}] \cong \Spec(R[\tfrac{1}{p}])$ and $Y^\wedge$ admits a maximal good cover. 
\end{cor}
\begin{proof}
    We can take a set of elements $f_1,\ldots,f_r \in pR$ so that $\Spec(R[\tfrac{1}{p}])$ is covered by $D(f_i)$ for $1\leq i \leq r$ and each $D(f_i)$ is standard \'{e}tale over a polynomial algebra over $E$. Let $Y\to \Spec(R)$ be the blowup along $V(f_1,\ldots,f_r)$ and let $Y^+$ be the normalization of $Y$ in $\Spec(R[\tfrac{1}{p}])$. Let $V_i^+ \subset Y^+$ be the locus where $f_i$ divides $f_j$ for $1\leq j \leq r$. We have $V_i^+[\tfrac{1}{p}]=D(f_i)$, so $(V_i^+)^\wedge$ admits a maximal good cover by \Cref{cor:maxlgood}. Thus,  $(Y^+)^\wedge$ admits a maximal good cover. 
\end{proof}

\begin{cor}
    Let $R$ be a $p$-torsion free normal domain of finite type over $O_E$. If $R[\tfrac{1}{p}]$ is smooth over $E$, and $R/pR$ and $R[\tfrac{1}{p}]$ are of the same dimension, there is a nonempty open subset $U\subset \Spec(R/pR)$ such that $\Spec(R)^\wedge_{/U}$ admits a maximal good cover. 
\end{cor}
\begin{proof}
    We can take an affine open subset $V\subset \Spec(R[\tfrac{1}{p}])$ that is standard \'{e}tale over an affine space over $E$. Let $Z$ be the Zariski closure of $\Spec(R[\tfrac{1}{p}])-V$ in $\Spec(R)$. Since $R/pR$ is of the same dimension as $R[\tfrac{1}{p}]$, $Z\otimes_{O_E} (O_E/pO_E)$ has a strictly smaller dimension than $\Spec(R/pR)$. In particular, $\Spec(R/pR)-Z$ is nonempty. Thus, we can take a distinguished open subset $D_f \subset \Spec(R)-Z$ such that $D_f \cap \Spec(R/pR)$ is nonempty. By construction, $D_f[\tfrac{1}{p}]$ is standard \'{e}tale over an affine space over $E$, so $D_f^\wedge$ admits a maximal good cover. 
\end{proof}

\subsection{Relative representability}

The aim of this section is to prove our main representability criterion by showing that the construction in \Cref{cor:expaffrep} can be glued in the Zariski topology.

We say that a perfect scheme $X$ over an affine perfect scheme $\Spec(A)$ is perfectly quasi-projective over $A$ if $X$ is perfectly of finite type over $A$ and admits an ample line bundle. 

\begin{prop} \label{thm:dilatation}
    Let $\mfr{X}$ be a reduced excellent formal scheme over $\bb{Z}_p$ admitting a good cover. Let $Y\to \mfr{X}^\diamond$ be a quasiseparated map of small $v$-sheaves. Let $U \subset \mfr{X}^\an$ be a quasicompact open immersion with a lift $U^\diamond \to Y$. Suppose that the following condition holds. 
    \begin{quote}
        For every complete adic perfectoid ring $R$ with an adic morphism $\Spf(R)\to \mfr{X}$, there is a unique distinguished perfectoid formal $R$-scheme $\mfr{Y}_R$ over $Y$ such that $\mfr{Y}_R$ is adic over $R$, $\mfr{Y}_{R, \red}$ is perfectly quasi-projective over $R_\red$, $\mfr{Y}_R^\diamond \to Y\times_{\mfr{X}^\diamond} \Spd(R)$ is a closed immersion and there is an isomorphism $\mfr{Y}_R^\an \cong U\times_{\mfr{X}^\an} \Spd(R)^\an$ that represents the base change of $U^\diamond \to Y$ to $\Spd(R)^\an$. 
    \end{quote}
    Then, the $v$-closure of $U^\diamond$ in $Y$ is representable by a unique reduced excellent distinguished formal scheme $\mfr{Y}$ admitting a maximal good cover with $\mfr{Y}^\an \cong U$. Moreover, $\mfr{Y}$ is separated and topologically of finite type over $\mfr{X}$. 
\end{prop}

\begin{rmk}
    The uniqueness of $\mfr{Y}_R$ is a mild condition. For example, if $Y\times_{\mfr{X}^\diamond} \Spd(R)$ is a kimberlite, $\mfr{Y}_R$ represents the thick closure of $U\times_{\mfr{X}^\an} \Spd(R)^\an$ in $Y\times_{\mfr{X}^\diamond} \Spd(R)$ and the uniqueness follows from \Cref{lem:perfdrep}. 
\end{rmk}

\begin{proof}
    First, suppose that $\mfr{X}=\Spf(R)$ where $R$ admits a good cover $(R_\bullet, \Gamma_\bullet)$. Let $\mfr{Y}_\infty$ be the unique perfectoid formal $R_\infty$-scheme over $Y$ satisfying the given condition. For every $\gamma \in \Gamma_\infty$, $\mfr{Y}_\infty \otimes_{R_\infty, \gamma} R_\infty$ also satisfies the given condition, so there is a unique isomorphism $\mfr{Y}_\infty \otimes_{R_\infty, \gamma} R_\infty \cong \mfr{Y}_\infty$. Thus, we have an action of $\Gamma_\infty$ on $\mfr{Y}_\infty$. 

    Since $\mfr{Y}_{\infty, \red}$ is perfectly of finite type over $R_{\infty, \red}$, we can take a finite affine open covering $\{ \mfr{V}_{\infty,i} \}_{i\in I}$ of $\mfr{Y}_\infty$. Let $U_n=U\times_{\mfr{X}^\an} \Spa(R_n)^\an$ and $V_{\infty,i}=\mfr{V}_{\infty,i}^\an \subset \Spa(R_\infty)^\an$. Then, $\{V_{\infty,i}\}_{i\in I}$ is a finite cover of $U_\infty$. 
    By \cite[Lemma 12.17]{Sch17}, that cover can be defined over $U_m$ for sufficiently large $m\geq 0$. In particular, $V_{\infty,i}$ is stable under $\Gamma_\infty^m$ for every $i\in I$. Since $\mfr{V}_{\infty,i}$ is distinguished, we see by looking at the specialization map that $\mfr{V}_{\infty,i}$ is also stable under $\Gamma_\infty^m$. Thus, we may apply \Cref{thm:affrep} to $\mfr{V}_{\infty,i}$ and obtain a maximal good cover $(S_{\bullet, i}, \Gamma_{\bullet}^m)_{\bullet \geq m}$ by \Cref{cor:expaffrep}. 

    We show that the collection $\{\Spf(S_{m,i})\}_{i\in I}$ can be glued to a formal scheme $\mfr{Y}_m$. For every $i,j\in I$, $\mfr{V}_{\infty,i} \cap \mfr{V}_{\infty,j}$ is stable under $\Gamma_\infty^m$. Since $(S_{m,i})_{\red}^\perf \to (S_{\infty, i})_\red$ is integral, $\Spf(S_{\infty,i}) \to \Spf(S_{m,i})$ is closed, so there is an open subspace $W_{m,i}^j \subset \Spf(S_{m,i})$ whose pullback $W_{\infty,i}^j \subset \mfr{V}_{\infty,i}$ equals $\mfr{V}_{\infty,i} \cap \mfr{V}_{\infty,j}$. Similarly, there is an open subspace $W_{m,j}^i \subset \Spf(S_{m,j})$ whose pullback $W_{\infty,j}^i \subset \mfr{V}_{\infty,j}$ equals $\mfr{V}_{\infty,i} \cap \mfr{V}_{\infty,j}$. By construction, $W_{m,i}^j$ and $W_{m,j}^i$ admit maximal good covers, and $W_{m,i}^{j,\an} \cong W_{m,j}^{i,\an}$ as open subspaces of $\Spa(R_n)^\an$. Moreover, $(W_{m,i}^j)^\diamond$ and $(W_{m,j}^i)^\diamond$ are geometric quotients of $(\mfr{V}_{\infty,i} \cap \mfr{V}_{\infty,j})^\diamond$ by $\Gamma_\infty^m$ that are qcqs over $\Spd(R_m)$, so we have $(W_{m,i}^j)^\diamond \cong (W_{m,j}^i)^\diamond$ by \Cref{prop:geomquotiscat}. It follows from \Cref{cor:uniqrepexc} that there is a unique isomorphism $W_{m,i}^j \cong W_{m,j}^i$ of formal schemes compatible with the isomorphisms on the analytic loci and the $v$-sheafifications. Thus, we obtain a formal scheme $\mfr{Y}_m$ by gluing the affine formal schemes $\Spf(S_{m,i})$ for $i\in I$.
    
    By construction, $\mfr{Y}_m^\diamond$ is a geometric quotient of $\mfr{Y}_\infty^\diamond$ by $\Gamma_\infty^m$ that is qcqs over $\Spd(R)$. Since $Y$ is quasiseparated over $\Spd(R)$, we obtain a map $\mfr{Y}_m^\diamond \to Y\times_{\Spd(R)} \Spd(R_m)$ by \Cref{prop:geomquotiscat}. By \Cref{cor:transsurj}, $\mfr{Y}_\infty^\diamond \to \mfr{Y}_m^\diamond \times_{\Spd(R_m)} \Spd(R_\infty)$ is surjective. It is also injective since $\mfr{Y}_\infty^\diamond \to Y\times_{\Spd(R)} \Spd(R_\infty)$ factors through $\mfr{Y}_m^\diamond \times_{\Spd(R_m)} \Spd(R_\infty)$. Thus, the base change of $\mfr{Y}_m^\diamond \to Y\times_{\Spd(R)} \Spd(R_m)$ to $\Spd(R_\infty)$ is a closed immersion, so it is a closed immersion itself by \cite[Proposition 10.11]{Sch17}. By construction, $\mfr{Y}_m^\an \cong U_m$ and $\mfr{Y}_m$ is distinguished, so $U_m^\diamond$ is dense in $\mfr{Y}_m^\diamond$ by \Cref{lem:thickvclos}. Thus, the $v$-closure of $U_m$ in $Y\times_{\Spd(R)} \Spd(R_m)$ is representable by $\mfr{Y}_m$. By \Cref{thm:affrep}, $\mfr{Y}_m$ admits a maximal good cover and is topologically of finite type over $R_m$. Moreover, since $\mfr{Y}_\infty$ is quasi-projective over $R_\red$, and in particular, separated, $\mfr{V}_{\infty,i} \cap \mfr{V}_{\infty,j}$ is affine. By applying \Cref{thm:affrep} to $\mfr{V}_{\infty,i} \cap \mfr{V}_{\infty,j}$, we see that $W_{m,i}^j$ is affine for every $i,j\in I$ and $\mfr{Y}_m$ is separated. The uniqueness of $\mfr{Y}_m$ follows from \Cref{cor:uniqrepexc}. 

    Now, we will show that we can take $m=0$ if we choose a suitable affine open cover $\{\mfr{V}_{\infty,i}\}_{i\in I}$. It is enough to choose so that each $\mfr{V}_{\infty,i}$ is stable under $\Gamma_{\infty}$. Since $\mfr{Y}_{\infty,\red}$ is perfectly quasi-projective over $R_{\infty,\red}$, $\mfr{Y}_{\infty,\red}$ admits an ample line bundle. Since $\mfr{Y}_{\infty,\red} = \lim_{n\geq m} \mfr{Y}_{n,\red}^\perf$, that line bundle can be defined over $\mfr{Y}_{n,\red}^\perf$ for sufficiently large $n\geq m$. Then, $\mfr{Y}_{n,\red}$ is quasi-projective over $R_\red$, so any $\Gamma_n$-orbit of $\mfr{Y}_{n,\red}$ admits a $\Gamma_n$-stable affine open neighborhood. In particular, $\mfr{Y}_n$ admits a $\Gamma_n$-stable finite affine covering. Its pullback to $\mfr{Y}_\infty$ yields a desired $\Gamma_\infty$-stable finite affine covering. 

    Finally, we treat the general case. Take an affine covering of $\mfr{X}$ by $\Spf(R)$ with $R$ admitting a good cover. The $v$-closure of $U^\diamond\times_{\mfr{X}^\diamond} \Spd(R)$ in $Y\times_{\mfr{X}^\diamond} \Spd(R)$ is representable by a unique reduced excellent distinguished formal scheme $\mfr{Y}_R$ admitting a maximal good cover with $\mfr{Y}_R^\an \cong U\times_{\mfr{X}^\an} \Spa(R)^\an$. By the uniqueness of $\mfr{Y}_R$, the collection $\{\mfr{Y}_R\}$ can be glued to a formal scheme $\mfr{Y}$ over $\mfr{X}$. It is easy to see that $\mfr{Y}$ satisfies the desired condition. 
\end{proof}

\begin{rmk}
    If $U=\mfr{X}^\an$, $\mfr{Y}$ is automatically proper by \Cref{lem:dilatationproper}. In addition, if $\mfr{X}$ is locally monogeneous, e.g. if $\mfr{X}$ is $p$-adic, $\mfr{Y}\to \mfr{X}$ is a formal modification by \Cref{prop:formalmod}. 
\end{rmk}

\begin{lem} \label{rmk:existenceY}
    In the setting of \Cref{thm:dilatation}, suppose $\mfr{X} = \Spf(R)$ with $R$ admitting a good cover $(R_\bullet, \Gamma_\bullet)$. Then,  the same claim holds even if the given condition holds only for $\Spf(R_\infty) \to \mfr{X}$. Moreover, if $Y \times_{\Spd(R)} \Spd(R_\red)$ is representable by a separated perfectly finite type $R_\red$-scheme, we do not need the perfect quasi-projectivity assumption on $\mfr{Y}_{R_\infty, \red}$. 
\end{lem}
\begin{proof}
    The first claim is immediate from the proof. For the second claim, let $Y^\red$ be the perfect $R_\red$-scheme representing $Y \times_{\Spd(R)} \Spd(R_\red)$. It is enough to show that $\mfr{Y}_{R_\infty, \red}$ admits a $\Gamma_\infty$-stable affine open covering. Since $\mfr{Y}_{R_\infty, \red}^\diamond \to (Y^\red\otimes_{R^\perf_\red} R_{\infty, \red})^\diamond$ is a closed immersion, $\mfr{Y}_{R_\infty, \red} \to Y^\red \otimes_{R^\perf_\red} R_{\infty, \red}$ is also a closed immersion by \cite[Lemma 3.31]{Gle24}. In particular, $\mfr{Y}_{R_\infty, \red} \to Y^\red$ is affine. Thus, if we take a finite affine open covering of $Y^\red$, its pullback to $\mfr{Y}_{R_\infty, \red}$ is a $\Gamma_\infty$-stable finite affine open covering. 
\end{proof}


The following corollary shows the representability of $v$-sheaf theoretic modifications. 

\begin{cor} \label{cor:vshfmodif}
    Let $R$ be a reduced excellent complete adic ring admitting a good cover $(R_\bullet,\Gamma_\bullet)$. Let $Y$ be a thick prekimberlite formally adic over $\Spd(R)$ with $Y^\an \cong \Spd(R)^\an$. Suppose that $Y^\red$ is perfectly of finite type over $R_\red$ and $Y\times_{\Spd(R)} \Spd(R_\infty)$ is representable by a perfectoid formal scheme adic over $R_\infty$. Then, $Y$ is representable by a unique proper formal $R$-scheme $\mfr{Y}$ admitting a maximal good cover with $\mfr{Y}^\an \cong \Spa(R)^\an$. 
\end{cor}
\begin{proof}
    Since $Y$ is formally separated, $Y^\red$ is separated. By \Cref{rmk:existenceY}, it is enough to check the condition for $R_\infty$. Let $\mfr{Y}_\infty$ be the perfectoid formal scheme representing $Y\times_{\Spd(R)} \Spd(R_\infty)$. It is enough to show that $\mfr{Y}_\infty$ is thick. By \Cref{lem:perfdrep}, $\Gamma_\infty$ acts on $\mfr{Y}_\infty$. The open subspace $V=\mfr{Y}_{\infty, \red} - \spc_{\mfr{Y}_\infty}(\mfr{Y}_\infty^\an) \subset \mfr{Y}_{\infty, \red}$ is stable under $\Gamma_\infty$. By construction, $V^\diamond \subset \mfr{Y}_\infty^\diamond$ is open and stable under $\Gamma_\infty$. Let $V_0 \subset \lvert Y \rvert$ be the image of $V^\diamond$. By \cite[Proposition 12.9]{Sch17}, $\lvert \mfr{Y}_\infty^\diamond \rvert \to \lvert Y \rvert$ is a quotient map, so $V_0$ is open. By construction, $V_0^\red \cap \spc_Y(Y^\an) = \phi$, so $V_0^\red$ is empty as $Y$ is thick. Since there is a map $V\to V_0^\red$, we see that $V$ is empty. 
\end{proof}

\subsection{Variant}

In this section, we provide another form of our relative representability criterion that is suitable for application. Here, we only deal with $p$-adic formal schemes. For a $v$-sheaf $X$ over $\Spd(\bb{Z}_p)$, $X_\eta$ denotes $X\times_{\Spd(\bb{Z}_p)} \Spd(\bb{Q}_p,\bb{Z}_p)$ and $X_s$ denotes $X\times_{\Spd(\bb{Z}_p)} \Spd(\bb{F}_p)$. Recall the notation in \Cref{ssec:basicvsh} and \Cref{ssc:adicperfd}. 

\begin{prop} \label{prop:finetrep}
    Let $\mfr{X}$ be a reduced excellent $p$-adic formal scheme admitting a good cover. Let $Y\to \mfr{X}^\diamond$ be a quasiseparated map of small $v$-sheaves. Let $Z \subset Y_\eta$ be a closed subsheaf that is finite \'{e}tale over $\mfr{X}_\eta^\diamond$. Suppose that the following condition holds. 
    \begin{quote}
        For every perfectoid ring $R$ with $\Spf(R)\to \mfr{X}$, there is a perfectly proper $R^\flat$-scheme $Y_{R^\flat}$ and a finite \'{e}tale $R^\flat[1/\xi_{R,0}]$-scheme $Z_{R^\flat[1/\xi_{R,0}]}$ with a morphism $Z_{R^\flat[1/\xi_{R,0}]} \to  Y_{R^\flat}$ such that we have the following commutative diagram
        \begin{center}
            \begin{tikzcd}
                Z \times_{\mfr{X}_\eta^\diamond} \Spd(R)^\an \ar[r] \ar[d,"\cong"] & Y \times_{\mfr{X}^\diamond} \Spd(R) \ar[d,"\cong"] \\
                Z_{R^\flat[1/\xi_{R,0}]}^{\diamondsuit}\times_{\Spec(R^\flat[1/\xi_{R,0}])^\diamondsuit} \Spd(R^\flat)^\an \ar[r] & Y_{R^\flat}^\diamond\times_{\Spec(R^\flat)^\diamond} \Spd(R^\flat). 
            \end{tikzcd}
        \end{center}
        that is compatible with tilting equivalence $\Spd(R) \cong \Spd(R^\flat)$. 
    \end{quote}
    Then, $Z^\clos$ is representable by a unique reduced excellent distinguished $p$-adic formal scheme $\mfr{Y}$ admitting a maximal good cover that is proper over $\mfr{X}$ with $\mfr{Y}_\eta$ finite \'{e}tale over $\mfr{X}_\eta$. 
\end{prop}

\begin{proof}
    First, suppose that $\mfr{X}=\Spf(R)$ with $R$ admitting a good cover $(R_\bullet, \Gamma_\bullet)$. Let $Z_{R_\infty^\flat}$ be the scheme-theoretic image of $Z_{R_\infty^\flat[1/\xi_{R_\infty,0}]} \to Y_{R_\infty^\flat}$. Let $\mfr{Y}_{R_\infty^\flat}$ (resp.\ $\mfr{Z}_{R_\infty^\flat}$) be the $\xi_{R_\infty,0}$-adic completion of $Y_{R_\infty^\flat}$ (resp.\ $Z_{R_\infty^\flat}$). Since $Z_{R_\infty^\flat}$ is $\xi_{R_\infty,0}$-torsion free, $\mfr{Z}_{R_\infty^\flat}$ is distinguished. Thus, $\mfr{Z}_{R_\infty^\flat}$ is thick by \Cref{cor:nzdgenerator}. Let $\mfr{Y}_{R_\infty}$ (resp.\ $\mfr{Z}_{R_\infty}$) be the untilt of $\mfr{Y}_{R_\infty^\flat}$ (resp.\ $\mfr{Z}_{R_\infty^\flat}$) over $R_\infty$ (see \Cref{prop:tiltperfdfmsch}). Then, $\mfr{Y}_{R_\infty}$ represents $Y\times_{\mfr{X}^\diamond} \Spd(R_\infty)$ and $\mfr{Z}_{R_\infty}^\diamond \subset \mfr{Y}_{R_\infty}^\diamond$ is a thick closed subsheaf. We will show that $\mfr{Z}_{R_\infty,\eta}$ represents $Z\times_{\mfr{X}_\eta^\diamond} \Spd(R_\infty)^\an$. 

    By \Cref{lem:perfprop}, we have $\mfr{Z}_{R_\infty^\flat}^{\diamond,\an} \cong Z_{R_\infty^\flat}^\diamond \times_{\Spec(R_\infty^\flat)^\diamond} \Spd(R_\infty^\flat)^\an \cong Z_{R_\infty^\flat}^\diamondsuit \times_{\Spec(R_\infty^\flat)^\diamondsuit} \Spd(R_\infty^\flat)^\an \cong (Z_{R_\infty^\flat}[1/\xi_{R_\infty,0}])^{\diamondsuit}\times_{\Spec(R_\infty^\flat[1/\xi_{R,0}])^\diamondsuit} \Spd(R_\infty^\flat)^\an$. Since $Z_{R_\infty^\flat[1/\xi_{R,0}]} \to Z_{R_\infty^\flat}[1/\xi_{R_\infty,0}]$ is finite and surjective, $Z_{R_\infty^\flat[1/\xi_{R,0}]}^\diamondsuit \to Z_{R_\infty^\flat}[1/\xi_{R_\infty,0}]^\diamondsuit$ is surjective on geometric points. In particular, $Z\times_{\mfr{X}_\eta^\diamond} \Spd(R_\infty)^\an \to \mfr{Z}_{R_\infty,\eta}^\diamond$ is surjective on geometric points. Since $Z\subset Y_\eta$, it is also injective, so $Z\times_{\mfr{X}_\eta^\diamond} \Spd(R_\infty)^\an \cong \mfr{Z}_{R_\infty,\eta}^\diamond$. Thus, $\mfr{Z}_{R_\infty}$ represents the thick closure of $Z\times_{\mfr{X}_\eta^\diamond} \Spd(R_\infty)^\an$ in $Y\times_{\mfr{X}^\diamond} \Spd(R_\infty)$. 

    By the uniqueness of thick closures, $\mfr{Z}_{R_\infty}^\diamond \subset Y\times_{\mfr{X}^\diamond} \Spd(R_\infty)$ is stable under $\Gamma_\infty$. Since $\Spd(R_\infty) \to \Spd(R)$ is proper (as in \Cref{lem:transquot}), the image $Z^\ints\subset Y$ of $\mfr{Z}_{R_\infty}^\diamond$ under the projection to $Y$ is closed. By \Cref{lem:quottosubsh}, we have $\mfr{Z}_{R_\infty}^\diamond \cong Z^\ints \times_{\mfr{X}^\diamond} \Spd(R_\infty)$ and $Z^\ints$ is a geometric quotient of $\mfr{Z}_{R_\infty}^\diamond$ by $\Gamma_\infty$. 

    Let $R^+ \subset R[\tfrac{1}{p}]$ be the integral closure of $R$. Since $Z$ is finite \'{e}tale over $\mfr{X}_\eta^\diamond$, there is a unique finite \'{e}tale homomorphism $(R[\tfrac{1}{p}],R^+) \to (S^+[\tfrac{1}{p}],S^+)$ such that $Z \cong \Spd(S^+[\tfrac{1}{p}],S^+)$ by \cite[Lemma 15.6]{Sch17}. Let $(R_\infty[\tfrac{1}{p}],R_\infty^+) \to (S_\infty^+[\tfrac{1}{p}],S_\infty^+)$ be the base change of the finite \'{e}tale homomorphism. By \Cref{prop:thickan}, we have $(S_\infty^+)^{\circ \circ} \subset \Gamma(\mfr{Z}_{R_\infty}, \cl{O}) \subset S_\infty^+$. Let $S_0 =R+(S^+)^{\circ \circ}\subset S^+$. Since $R$ is reduced and excellent, $S^+$ is finite over $R$ and $S_0$ is finite over $R$. Moreover, we have a $\Gamma_\infty$-invariant morphism $\mfr{Z}_{R_\infty} \to \Spf(S_0)$, so by \Cref{prop:geomquotiscat}, we get a morphism $Z^\ints \to \Spd(S_0)$. By the proof of \Cref{lem:analyticfinet}, the reduction of $(S_0 \otimes_R R_\bullet, \Gamma_\bullet)$ is a good cover and let $S_\infty$ be as in the condition (perfd). Since $\Spd(S_\infty) \cong \Spd(R_\infty)\times_{\Spd(R)} \Spd(S_0)$, we have $Z^\ints \times_{\Spd(S_0)} \Spd(S_\infty) \cong \mfr{Z}_{R_\infty}^\diamond$. By \Cref{rmk:existenceY}, the proof of \Cref{thm:dilatation} applies to $Z^\ints \to \Spd(S_0)$ and we obtain the desired representability. 
    
    As in \Cref{cor:uniqrepexc}, the uniqueness of $\mfr{Y}$ follows from \Cref{lem:uniqmapmaxlgood} thanks to \cite[Lemma 15.6]{Sch17}. Thus, in the general case, we can glue $\mfr{Y}_R$ for each $\Spf(R) \hookrightarrow \mfr{X}$ as above and the claim follows. 
\end{proof}

\section{Applications to local shtukas} \label{sec:locsht}

\subsection{Local shtukas} \label{ssec:introlocsht}

In this section, we set up some notation and recall the definition of local shtukas introduced in \cite{SW20}. 

Let $F$ be a non-archimedean local field over $\bb{Q}_p$ and let $O_F$ be the ring of integers of $F$ with a residue field $k$. Let $\pi$ be a uniformizer of $F$ and let $q$ be the number of elements of $k$. For a perfect $k$-algebra $R$, we set $W_{O_F}(R)=W(R)\otimes_{W(k)} O_F$. 

Let $S=\Spa(R,R^+)$ be an affinoid perfectoid space over $k$ and let $\varpi\in R^+$ be a pseudo-uniformizer. We set $\cl{Y}_S=\Spa(W_{O_F}(R^+))-V([\varpi])$. It is an analytic adic space (see \cite[Proposition II.1.1]{FS21}). Let $\cl{Y}_{S,[0,n]}\subset \cl{Y}_S$ be the locus $\{\lvert \pi^n \rvert \leq \lvert [\varpi] \rvert \neq 0 \}$ for a positive integer $n$. By the proof of \cite[Proposition II.1.1]{SW20}, $\cl{Y}_{S,[0,n]}=\Spa(B_{S,[0,n]},B^+_{S,[0,n]})\subset \cl{Y}_S$ is an affinoid open subspace and the $p$-adic completion $B_{S,[0,n]}^\wedge$ of $B_{S,[0,n]}$ is isomorphic to $W_{O_F}(R)$. By \cite[Proposition II.1.3]{FS21}, we can glue this construction analytic locally and obtain an analytic adic space $\cl{Y}_S$ for every perfectoid space $S$ over $k$. 

Let $G$ be a connected reductive group over $F$ and let $\cl{G}$ be a parahoric group scheme of $G$ over $O_F$.

\begin{defi}(\cite[Section 23.1]{SW20})
    Let $S$ be a perfectoid space over $k$. A local $\cl{G}$-shtuka over $S$ is a tuple $((S^\sharp,\iota), \cl{P}, \varphi_{\cl{P}})$ such that $(S^\sharp, \iota)\in \Spd(O_F)(S)$ is an untilt of $S$ over $O_F$, $\cl{P}$ is a $\cl{G}$-torsor over $\cl{Y}_S$ and $\varphi_{\cl{P}}\colon \varphi^*\cl{P}\vert_{\cl{Y}_S\backslash S^\sharp} \cong \cl{P}\vert_{\cl{Y}_S\backslash S^\sharp}$ is an isomorphism of $\cl{G}$-torsors meromorphic along $S^\sharp$. 
\end{defi}

Let $\Perf_k$ be the $v$-site of perfectoid spaces over $k$. The functor taking a perfectoid space $S$ over $k$ to the groupoid of local $\cl{G}$-shtukas over $S$ is a $v$-stack on $\Perf_k$ by \cite[Proposition 19.5.3]{SW20} and denoted by $\Sht_\cl{G}$ (cf.\ \cite[Definition 23.1.1]{SW20}). There is a natural map $\Sht_{\cl{G}}\to \Spd(O_F)$ of $v$-stacks on $\Perf_k$. Note that a $v$-stack on $\Perf_k$ is identified with a $v$-stack on $\Perf$ with a map to $\Spd(k)$. For a $v$-stack $X$ on $\Perf_k$, we say that a map $X\to \Sht_{\cl{G}}$ of $v$-stacks on $\Perf_k$ is a local $\cl{G}$-shtuka over $X$. For a $v$-stack $X$ over $\Spd(O_F)$, we say that a map $X\to \Sht_{\cl{G}}$ of $v$-stacks over $\Spd(O_F)$ is a local $\cl{G}$-shtuka over $X_{/O_F}$. 

\begin{prop}\label{prop:mapLW}
    Let $(L_W^+\cl{G})^\diamondsuit$ be the functor taking an affinoid perfectoid space $\Spa(R,R^+)$ over $k$ to the group $\cl{G}(W_{O_F}(R))$. It is a small $v$-sheaf on $\Perf_k$ and there is a natural map $\Sht_{\cl{G}}\to [\ast/(L_W^+\cl{G})^\diamondsuit]$ of $v$-stacks on $\Perf_k$. Here, $\ast$ denotes the $v$-sheaf $\Spd(k)$. 
\end{prop}

\begin{proof}
    Since $\cl{G}$ is affine, $(L_W^+\cl{G})^\diamondsuit$ is a small $v$-sheaf on $\Perf_k$ by \cite[Theorem 8.7]{Sch17}. The quotient $v$-stack $[\ast/(L_W^+\cl{G})^\diamondsuit]$ classifies the set of $\cl{G}$-torsors on $\Spec(W_{O_F}(R))$ over an affinoid perfectoid space $S=\Spa(R,R^+)$ over $k$ since $\cl{G}$-torsors on $W_{O_F}(R)$ satisfy $v$-descent by \cite[Corollary 17.1.9]{SW20}. As in \cite[Section 20.3]{SW20}, we obtain a map $\Sht_{\cl{G}}\to [\ast/(L_W^+\cl{G})^\diamondsuit]$ by pulling back $\cl{G}$-torsors on $\cl{Y}_{S,[0,n]}$ to $W_{O_F}(R)$. 
\end{proof}

Let $R$ be a perfectoid ring over $O_F$. The Frobenius map on $W_{O_F}(R^\flat)$ is denoted by $\varphi_R$. By \cite[Lemma 2.4.2]{Ito25b}, the kernel of the surjection $\theta_R\colon W_{O_F}(R^\flat)\to R$ is principal. By an abuse of notation, a generator of $\Ker(\theta_R)$ is also denoted by $\xi_R$. The image of $\xi_R$ under the projection $W_{O_F}(R^\flat) \to R^\flat$ is denoted by $\xi_{R,0}$. 

\begin{defi}
    A $\cl{G}$-Breuil-Kisin-Fargues ($\cl{G}$-BKF) module over a perfectoid ring $R$ over $O_F$ is a pair $(P,\varphi_P)$ such that $P$ is a $\cl{G}$-torsor over $W_{O_F}(R^\flat)$ and $\varphi_P$ is an isomorphism $(\varphi_R^*P)[1/\xi_R] \cong P[1/\xi_R]$. 
\end{defi}

\begin{prop}\textup{(\cite{Gut23})}\label{prop:GBKF}
    For a perfectoid ring $R$ over $O_F$, the category of local $\cl{G}$-shtukas over $\Spd(R)_{/O_F}$ is equivalent to the category of $\cl{G}$-BKF modules over $R$. 
\end{prop}

The above result is also proved in \cite{GIZ25} when $R$ is in characteristic $p$. 

\subsection{Level structures}

In this section, we introduce parahoric level structures on local shtukas. Let $\cl{G}'$ be another parahoric group scheme of $G$ over $O_F$ with a morphism $\cl{G}'\to \cl{G}$ over $O_F$. It corresponds to a parahoric subgroup $\cl{G}'(O_F)\subset \cl{G}(O_F)$. 

\begin{lem}\label{lem:kerincl}
    For a $p$-torsion free $O_F$-algebra $A$, we have $\Ker(\cl{G}(A)\to \cl{G}(A/\pi A)) \subset \cl{G}'(A)$. 
\end{lem}
\begin{proof}
    Let $\cl{G}_{1}$ be the first congruence group scheme of $\cl{G}$ as in \cite[Definition A.5.12]{KP23}. Since $A$ is $p$-torsion free, we have $\cl{G}_1(A)=\Ker(\cl{G}(A)\to \cl{G}(A/\pi A))$. We show that there is a morphism $\cl{G}_1\to \cl{G}'$ over $O_F$. Let $\breve{F}$ be the completion of the maximal unramified extension of $F$ and let $O_{\breve{F}}$ be the ring of integers in $\breve{F}$. By \cite[Corollary 2.10.10]{KP23}, it is enough to show that $\Ker(\cl{G}(O_{\breve{F}}) \to \cl{G}(O_{\breve{F}}/\pi O_{\breve{F}})) \subset \cl{G}'(O_{\breve{F}})$. The claim follows from \cite[Corollary 8.4.12, Corollary 8.4.13]{KP23}. 
\end{proof}

\begin{defi}
    Let $H_1\to H_2$ be a homomorphism of group schemes over a scheme $S$ and let $P_2$ be an $H_2$-torsor over $S$. An $H_1$-subtorsor of $P_2$ over $S$ is a pair $(P_1,\iota)$ such that $P_1$ is an $H_1$-torsor over $S$ and $\iota$ is an isomorphism $P_1\times^{H_1} H_2 \cong P_2$ of $H_2$-torsors. 
\end{defi}

For a section $s\colon S\to P_2$, we say that the $H_1$-subtorsor $(H_1, s\cdot (-)\colon H_2 \cong P_2)$ is the $H_1$-subtorsor generated by $s$. For any $h_1\in H_1(S)$, the $H_1$-subtorsor generated by $s\cdot h_1$ is isomorphic to the $H_1$-subtorsor generated by $s$. 

By \cite[Theorem 8.4.19 (2)]{KP23}, the image of $\cl{G}'_k$ in $\cl{G}_k$ is a parabolic subgroup of $\cl{G}_k$. Let $\cl{H}\subset \cl{G}_k$ be the parabolic subgroup.

\begin{lem}\label{lem:G'subtors}
    Let $A$ be a $p$-torsion free and $p$-complete $O_F$-algebra and let $P$ be a $\cl{G}$-torsor over $A$. There is a natural bijection between the set of $\cl{G}'$-subtorsors of $P$ and the set of $\cl{H}$-subtorsors of $P_{A/\pi A}$. 
\end{lem}
\begin{proof}
    The one direction is clear by taking the base change to $A/\pi A$. 
    Suppose that we have an $\cl{H}$-subtorsor of $P_{A/\pi A}$. 
    Since $A$ is $p$-torsion free, there are no nontrivial automorphisms of $\cl{G}'$-subtorsors of $P$, so we may $p$-completely \'{e}tale locally localize $A$. 
    Thus, we may assume that $P$ is trivial and the $\cl{H}$-subtorsor of $P_{A/\pi A}$ is generated by the reduction of an element $x\in P(A)$. 
    It is enough to show that the $\cl{G}'$-subtorsor generated by $x$ is independent of the choice of $x$. 
    Let $x'\in P(A)$ be another choice. 
    The reduction of $x^{-1}x'$ to $A/\pi A$ lies in $\cl{H}(A/\pi A)$. Now, the map $\cl{G}'(A)\to \cl{H}(A/\pi A)$ is surjective because $\cl{G}(A)\to \cl{G}(A/\pi A)$ is surjective and $\cl{G}'$ is the dilatation (see \cite[Section A.5]{KP23}) of $\cl{G}$ along $\cl{H}$ by \cite[Corollary 2.10.10]{KP23} and \cite[Theorem 8.4.19 (3)]{KP23}. Thus, we may assume that the reduction of $x^{-1}x'$ is trivial. Then, we have $x^{-1}x'\in \cl{G}'(A)$ by \Cref{lem:kerincl}. 
\end{proof}

The following lemma is the basis of the representability of the moduli of subtorsors. 

\begin{lem}\label{lem:quotient}
    Let $k$ be a field. Let $G$ be a connected linear algebraic group over $k$ and let $H$ be a parabolic subgroup of $G$. Let $R$ be a $k$-algebra and let $P$ be a $G$-torsor over $R$. The quotient fpqc sheaf $P/H$ is representable by a projective $R$-scheme. 
\end{lem}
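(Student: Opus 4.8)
I would realize $X/P$ as a twisted form of the flag variety $H/P$ and exploit the classical projective embeddings of the latter. So I would begin by recalling the geometry of $H/P$: since $P$ is a parabolic subgroup of the connected reductive group $H$ over the field $k$, the fppf quotient $H/P$ is representable by a projective $k$-scheme, and by a theorem of Chevalley there exist a finite-dimensional $H$-representation $V$ over $k$ and a line $\ell\subset V$ whose scheme-theoretic stabilizer in $H$ is exactly $P$. This yields an $H$-equivariant immersion $H/P\hookrightarrow \bb{P}(V)$, which is a \emph{closed} immersion because $H/P$ is proper over $k$; in particular the restriction of $\cl{O}_{\bb{P}(V)}(1)$ is an $H$-linearized ample line bundle on $H/P$.

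Next I would form the associated bundle $X/P := X\times^{H}(H/P)$, that is, the fppf sheafification of $R'\mapsto X(R')/P(R')$; its sections over $R'$ are precisely the $P$-subtorsors of $X_{R'}$, which is the object to be represented. Twisting the representation $V$ by the torsor $X$ produces a vector bundle ${}_{X}V := X\times^{H}V$ of rank $\dim_k V$ on $\mrm{Spec}(R)$, and applying the functor $X\times^{H}(-)$ to the $H$-equivariant closed immersion $H/P\hookrightarrow \bb{P}(V)$ yields a closed immersion $X/P\hookrightarrow \bb{P}({}_{X}V)$ over $\mrm{Spec}(R)$. Representability of $X/P$ here is the standard fact that $X\times^{H}Y$ is a scheme whenever $Y$ is a quasi-projective $k$-scheme with an $H$-action admitting an $H$-linearized ample line bundle: concretely, since $H$ is smooth, $X$ is trivial over some étale cover $\mrm{Spec}(R')\to \mrm{Spec}(R)$, over which $X/P$ becomes $(H/P)_{R'}$, and one glues these copies along the resulting descent datum, the gluing being effective because it is compatible with a relatively ample line bundle. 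Finally, the projective bundle $\bb{P}({}_{X}V)\to \mrm{Spec}(R)$ is projective and $X/P$ is a closed subscheme of it, so $X/P$ is projective over $R$, with relatively ample line bundle the restriction of $\cl{O}_{\bb{P}({}_{X}V)}(1)$ (equivalently, the twist of the $H$-linearized ample bundle on $H/P$).

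The only genuinely delicate point is the \emph{representability} of $X/P$ — a priori it is only an fppf sheaf, or at best an algebraic space — together with the assertion that twisting keeps the immersion into projective space closed. I would treat this either by the explicit étale-descent gluing sketched above (gluing quasi-projective schemes along descent data compatible with a relatively ample bundle) or by quoting the standard representability of associated bundles with quasi-projective, ample-linearized fibers; once $X/P$ is known to be a scheme, projectivity over $R$ is immediate from the closed embedding into $\bb{P}({}_{X}V)$. The remaining ingredients — Chevalley's embedding of $H/P$, triviality of $X$ over an étale cover, and the closed-subscheme-of-projective conclusion — are standard and require no computation.
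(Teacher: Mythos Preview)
Your proposal is correct and follows essentially the same approach as the paper: produce an $H$-linearized ample line bundle on $H/P$, trivialize $X$ over an \'etale cover, and conclude by effectivity of descent for schemes equipped with a relatively ample bundle. The only cosmetic difference is the source of the ample bundle --- the paper invokes the line bundle $O(\lambda)$ attached to a regular dominant cocharacter, whereas you obtain it as the pullback of $\cl{O}(1)$ along a Chevalley embedding $H/P\hookrightarrow\bb{P}(V)$; your version has the mild bonus of exhibiting the explicit projective target $\bb{P}({}_XV)$ over $\mrm{Spec}(R)$.
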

\begin{proof}
    Since the unipotent radical of $G$ is contained in $H$, we may assume that $G$ is reductive. A regular dominant cocharacter $\lambda$ of $H$ defines a $G$-equivariant ample line bundle $O(\lambda)$ on $G/H$. 
    Take an \'{e}tale $R$-algebra $R'$ so that $P_{R'}\cong G_{R'}$. 
    It gives a $1$-cocycle valued in $G$ and it induces an \'{e}tale descent datum $((G/H)_{R'},O(\lambda))$ corresponding to $P/H$. 
    Since $O(\lambda)$ is ample, it is effective (see \cite[Theorem 7]{BLR90}). 
\end{proof}

\begin{defi}
    Let $\cl{P}$ be a local $\cl{G}$-shtuka on a $v$-stack $X$. A $\cl{G}'$-level structure on $\cl{P}$ is a commutative diagram
    \begin{center}
        \begin{tikzcd}
            X \ar[r, "\cl{P}'"] \ar[rd, "\cl{P}"'] & \Sht_{\cl{G}'} \ar[d]\\
            & \Sht_\cl{G}. 
        \end{tikzcd}
    \end{center}
    In other words, it consists of a local $\cl{G}'$-shtuka $\cl{P}'$ with an isomorphism $\cl{P}'\times^{\cl{G'}} \cl{G} \cong \cl{P}$. 
\end{defi}

We have the following commutative diagram by \Cref{prop:mapLW}. 

\begin{center}
    \begin{tikzcd}
        \Sht_{\cl{G}'} \ar[r] \ar[d] & \lbrack \ast/(L_W^+\cl{G'})^\diamondsuit \rbrack \ar[d]\\
        \Sht_\cl{G} \ar[r] & \lbrack \ast/(L_W^+\cl{G})^\diamondsuit \rbrack 
    \end{tikzcd}
\end{center}

\begin{lem} \label{lem:fibSht}
    The following holds in the above diagram. 
    \begin{enumerate}
        \item $[\ast/(L_W^+\cl{G}')^\diamondsuit] \to [\ast/(L_W^+\cl{G})^\diamondsuit]$ is separated. 
        \item $\Sht_{\cl{G'},\eta}$ is finite \'{e}tale over $\Sht_{\cl{G},\eta}$ and locally isomorphic to the set $\cl{G}(O_F)/\cl{G}'(O_F)$. 
        \item $\Sht_{\cl{G}',s}$ is isomorphic to $\Sht_{\cl{G},s}\times_{[\ast/(L_W^+\cl{G})^\diamondsuit]} [\ast/(L_W^+\cl{G}')^\diamondsuit]$. 
    \end{enumerate}
\end{lem}
\begin{proof}
    By \Cref{lem:G'subtors} and \Cref{lem:quotient}, the fiber $\ast \times_{[\ast/(L_W^+\cl{G})^\diamondsuit]} [\ast/(L_W^+\cl{G}')^\diamondsuit]$ is represented by the projective $k$-scheme $\cl{G}_k/\cl{H}$. The claim (1) follows from the $v$-descent of separatedness in \cite[Proposition 10.11]{Sch17}. 

    Let $S$ be a perfectoid space over $F$ and let $\cl{P}$ be a local $\cl{G}$-shtuka over $S_{/O_F}$. By \cite[Proposition 22.6.1]{SW20}, there is a pro-\'{e}tale $\underline{\cl{G}(O_F)}$-torsor $\bb{P}$ over $S$ corresponding to $\cl{P}$. The fiber product $\Sht_{\cl{G}'}\times_{\Sht_{\cl{G}},\cl{P}} S$ is represented by the finite \'{e}tale covering $\bb{P}/\cl{G'}(O_F)$ of $S$. This proves (2). 
    
    On the other hand, let $S=\Spa(R,R^+)$ be an affinoid perfectoid space over $k$ and let $\cl{P}$ be a local $\cl{G}$-shtuka over $S$. For every $\cl{G}'$-subtorsor $\cl{P}'$ of $\cl{P}$ on $\cl{Y}_S$, the Frobenius of $\cl{P}$ gives the structure of a local $\cl{G}'$-shtuka on $\cl{P}'$. Since the map $\cl{P}'\to \cl{P}$ is an isomorphism after inverting $p$, we see by applying the Beuville-Laszlo lemma (in the form of \cite[Lemma 5.2.9]{SW20}) to $B_{S,[0,n]}$ that $\cl{P}'$ is determined by the restriction to $W_{O_F}(R)$. Thus, we have (3). 
\end{proof}

\begin{prop}\label{prop:Shtclsd}
    The $v$-stack $\Sht_\cl{G'}$ is a closed substack of $\Sht_\cl{G}\times_{[\ast/(L_W^+\cl{G})^\diamondsuit]} [\ast/(L_W^+\cl{G}')^\diamondsuit]$. 
\end{prop}
\begin{proof}
    Let $S=\Spa(R,R^+)$ be an affinoid perfectoid space over $k$ with an untilt $S^\sharp = \Spa(R^\sharp,R^{+\sharp})$ over $O_F$. Let $\cl{P}$ be a local $\cl{G}$-shtuka over $S^\sharp_{/O_F}$. A $\cl{G}'$-level structure $\cl{P}'$ on $\cl{P}$ is determined by the corresponding $\cl{G}'$-subtorsor on $\cl{Y}_{S,[0,n]}$. By the same argument as in \Cref{lem:fibSht}, we see that $\cl{P}'$ is determined by the restriction to $W_{O_F}(R)$ and $\Sht_\cl{G'}$ is a substack of $\Sht_\cl{G}\times_{[\ast/(L_W^+\cl{G})^\diamondsuit]} [\ast/(L_W^+\cl{G}')^\diamondsuit]$. We show that it is a closed substack. 

    A section of $\Sht_\cl{G}\times_{[\ast/(L_W^+\cl{G})^\diamondsuit]} [\ast/(L_W^+\cl{G}')^\diamondsuit]$ over $S^\sharp$ corresponds to a $\cl{G}'$-subtorsor $\cl{P}'$ of $\cl{P}$ by the Beuville-Laszlo lemma. We determine when its pullback along a map $T = \Spa(A,A^+)\to S$ factors through $\Sht_{\cl{G}'}$. Since $\varphi_{\cl{P}}$ is meromorphic along $S^\sharp$, its pullback to $\cl{Y}_{T,[0,n]}$ is given by an isomorphism $(\varphi^*\cl{P})\vert_{B_{T,[0,n]}[1/\xi_{R^\sharp}]} \cong \cl{P}\vert_{B_{T,[0,n]}[1/\xi_{R^\sharp}]}$. The $\cl{G}'$-subtorsor $\cl{P}'$ defines a local $\cl{G}'$-shtuka over $T$ if and only if $\varphi_{\cl{P}}\vert_{B_{T,[0,n]}[1/\xi_{R^\sharp}]}$ induces an isomorphism $(\varphi^*\cl{P'})\vert_{B_{T,[0,n]}[1/\xi_{R^\sharp}]} \cong \cl{P'}\vert_{B_{T,[0,n]}[1/\xi_{R^\sharp}]}$. By applying the Beauville-Laszlo lemma to $B_{T,[0,n]}[1/\xi_{R^\sharp}]$, it can be written as a condition that two $\cl{G}'$-subtorsors of $\cl{P}$ are equal over the $p$-adic completion of $W_{O_F}(A)[1/\xi_{R^\sharp}]$. Now, we have $W_{O_F}(A)[1/\xi_{R^\sharp}]/(\pi) \cong A[1/\xi_{R^\sharp,0}]$. Let $P$ be a $\cl{G}$-torsor over $R$ corresponding to $\cl{P}\vert_S$. By \Cref{lem:G'subtors}, $\cl{G}'$-subtorsors of $\cl{P}$ over the $p$-adic completion of $W_{O_F}(A)[1/\xi_{R^\sharp}]$ correspond to $\cl{H}$-subtorsors of $P\otimes_R A[1/\xi_{R^\sharp,0}]$. By \Cref{lem:quotient}, they correspond to sections of $P/\cl{H}$ over $A[1/\xi_{R^\sharp,0}]$. Now, through the isomorphism $(\varphi^*\cl{P})\vert_{B_{S,[0,n]}[1/\xi_{R^\sharp}]} \cong \cl{P}\vert_{B_{S,[0,n]}[1/\xi_{R^\sharp}]}$, each of $\cl{P}'$ and $\varphi^*\cl{P}'$ corresponds to a section of $P/\cl{H}$ over $R[1/\xi_{R^\sharp,0}]$. Since $P/\cl{H}$ is separated, the equalizer locus of the two sections is the zero locus of an ideal $I\subset R[1/\xi_{R^\sharp,0}]$. Let $J\subset R$ be an ideal such that $I=J[1/\xi_{R^\sharp,0}]$. Since $A$ is perfect, the composition $R\to A \to A[1/\xi_{R^\sharp,0}]$ factors through $R[1/\xi_{R^\sharp,0}]/I$ if and only if $\xi_{R^\sharp,0}\cdot J \subset \Ker(R\to A)$. Thus, the locus where $\cl{P}'$ defines a local $\cl{G}'$-shtuka is a Zariski closed subset of $S$ given by the ideal $\xi_{R^\sharp,0}\cdot J$. 
\end{proof}

We can check that the condition in \Cref{prop:finetrep} holds in the following situation. 

\begin{prop} \label{prop:repatinfty}
    Let $R$ be a perfectoid ring and let $\cl{P}$ be a local $\cl{G}$-shtuka over $\Spd(R)_{/O_F}$. There is a perfectly projective $R^\flat$-scheme $Y$ and a finite \'{e}tale $R^\flat[1/\xi_{R,0}]$-scheme $Z$ with a morphism $Z \to  Y$ such that we have the following commutative diagram
    \begin{center}
        \begin{tikzcd}
            \Sht_{\cl{G}',\eta} \times_{\Sht_{\cl{G},\eta}, \cl{P}_\eta} \Spd(R)^\an \ar[r] \ar[d,"\cong"] & \lbrack \ast/(L_W^+\cl{G}')^\diamondsuit \rbrack \times_{\lbrack \ast/(L_W^+\cl{G})^\diamondsuit \rbrack, \cl{P}} \Spd(R) \ar[d,"\cong"] \\
            Z^{\diamondsuit}\times_{\Spec(R^\flat[1/\xi_{R,0}])^\diamondsuit} \Spd(R^\flat)^\an \ar[r] & Y^\diamond\times_{\Spec(R^\flat)^\diamond} \Spd(R^\flat). 
        \end{tikzcd}
    \end{center}
    that is compatible with tilting equivalence $\Spd(R) \cong \Spd(R^\flat)$. 
\end{prop}
\begin{proof}
    By \Cref{prop:GBKF}, $\cl{P}$ is given by a $\cl{G}$-BKF module $P$ over $R$. Let $Y=(P\otimes_{W_{O_F}(R^\flat)} R^\flat)/\cl{H}$. By \Cref{lem:quotient}, $Y$ is projective over $R^\flat$. For each perfect $R^\flat$-algebra $A$, $Y(A)$ classifies the set of $\cl{G}'$-subtorsors of $P\otimes_{W_{O_F}(R^\flat)} W_{O_F}(A)$ by \Cref{lem:G'subtors}. Thus, we see from \Cref{lem:perfprop} that $Y^\diamond \times_{\Spec(R^\flat)^\diamond} \Spd(R^\flat) \cong Y^\diamondsuit \times_{\Spec(R^\flat)^\diamond} \Spd(R^\flat)$ is isomorphic to $\lbrack \ast/(L_W^+\cl{G}')^\diamondsuit \rbrack \times_{\lbrack \ast/(L_W^+\cl{G})^\diamondsuit \rbrack, \cl{P}} \Spd(R)$. 

    By \Cref{lem:fibSht}, $\Sht_{\cl{G}',\eta} \times_{\Sht_{\cl{G},\eta}, \cl{P}_\eta} \Spd(R)^\an$ is finite \'{e}tale over $\Spd(R)^\an$. Let $R^{\flat, +} \subset R^\flat[1/\xi_{R,0}]$ be the integral closure of $R^\flat$ inside $R^\flat[1/\xi_{R,0}]$. By \cite[Lemma 15.6]{Sch17}, there is a unique finite \'{e}tale homomorphism $(R^\flat[1/\xi_{R,0}], R^{\flat, +}) \to (S,S^+)$ such that $\Spd(S,S^+) \cong \Sht_{\cl{G}',\eta} \times_{\Sht_{\cl{G},\eta}, \cl{P}_\eta} \Spd(R)^\an$ under tilting equivalence. Let $Z=\Spec(S)$. By construction, we have a $\cl{G}'$-level structure $\cl{P}'$ of $\cl{P}$ over $\Spd(S,S^+)$. By restricting to $W_{O_F}(S)$, $\cl{P}'$ gives a $\cl{G}'$-subtorsor of $P\otimes_{W_{O_F}(R^\flat)} W_{O_F}(S)$. By the above argument, it corresponds to a morphism $Z \to Y$ and this morphism makes the given diagram commute by construction. 
\end{proof}

Now, we can define the flat moduli space of $\cl{G}'$-level structures on local $\cl{G}$-shtukas over reduced excellent $p$-adic formal schemes admitting good covers. 
 
\begin{thm} \label{thm:replevel}
    Let $\mfr{X}$ be a reduced excellent $p$-adic formal $O_F$-scheme admitting a good cover. Let $\cl{P}$ be a local $\cl{G}$-shtuka over $\mfr{X}^\diamond_{/O_F}$. The $v$-closure of the generic fiber in $\Sht_{\cl{G}'}\times_{\Sht_{\cl{G}},\cl{P}} \mfr{X}^\diamond$ is representable by a unique reduced excellent distinguished $p$-adic formal scheme $\mfr{Y}$ admitting a maximal good cover that is proper over $\mfr{X}$ with $\mfr{Y}_\eta$ finite \'{e}tale over $\mfr{X}_\eta$. 
\end{thm}

We say that $\mfr{Y}$ is the flat moduli space of $\cl{G}'$-level structures on $\cl{P}$.

\begin{proof}
    Let $Y=[\ast/(L_W^+\cl{G}')^\diamondsuit] \times_{[\ast/(L_W^+\cl{G})^\diamondsuit], \cl{P}} \mfr{X}^\diamond$ and $Z=\Sht_{\cl{G}',\eta} \times_{\Sht_{\cl{G},\eta}, \cl{P}_\eta} \mfr{X}^\diamond_\eta$. By \Cref{lem:fibSht}, $Y$ is separated over $\mfr{X}^\diamond$ and $Z$ is finite \'{e}tale over $\mfr{X}_\eta^\diamond$. By \Cref{prop:repatinfty}, we may apply \Cref{prop:finetrep} to $Z \hookrightarrow Y$ to see the representability of the $v$-closure of $Z$ in $Y$. By \Cref{prop:Shtclsd}, the $v$-closure of $Z$ in $Y$ lies in $\Sht_{\cl{G}'}\times_{\Sht_{\cl{G}},\cl{P}} \mfr{X}^\diamond$, so the claim follows. 
\end{proof}

\subsection{Integral models of local Shimura varieties and formal completion}

In this section, we review the basic properties of local Shimura varieties and their $v$-sheaf theoretic integral models introduced in \cite[Lecture 24, 25]{SW20}. For simplicity, we assume that $G$ is unramified. 

Fix an algebraic closure $\ov{k}$ of $k$ and let $O_{\breve{F}}=W_{O_F}(\ov{k})$ and $\breve{F}=O_{\breve{F}}[\tfrac{1}{p}]$. Let $B(G)$ denote the set of $\sigma$-conjugacy classes of $G(\breve{F})$. For each cocharacter $\mu \colon \bb{G}_m \to G_{\breve{F}}$, there is a subset $B(G, \mu) \subset B(G)$ of $\mu$-bounded $\sigma$-conjugacy classes. For each minuscule $\mu$ and $[b] \in B(G, \mu)$, the triple $(G, b, \mu)$ is called a local Shimura datum. Note that we adopt the opposite sign convention for $\mu$ compared to \cite{SW20}. 

For each compact open subgroup $K \subset G(F)$, a local Shimura variety $\cl{M}_{G, b, \mu, K}$ is defined as a rigid analytic variety over $\breve{F}$. When $K = \cl{G}(O_F)$, it is also denoted by $\cl{M}_{\cl{G}, b, \mu}$ and it parametrizes a pair $(\cl{P}, \iota)$ where $\cl{P}$ is a local $\cl{G}$-shtuka bounded by $-\mu$ and 
$
    \iota \colon \cl{P}\vert_{\cl{Y}_{[r, \infty)}} \cong \cl{E}^b\vert_{\cl{Y}_{[r, \infty)}}
$
for sufficiently large $r$ is a quasi-isogeny to $\cl{E}^b$. This moduli extends to perfectoid spaces over $O_{\breve{F}}$ by replacing the boundedness condition as follows. 

\begin{defi}\textup{(\cite[Conjecture 21.4.1]{SW20}, \cite[Theorem 1.2]{AGLR22})}
    Let $\cl{M}_{\cl{G}, -\mu}^\loc \subset \Gr_{\cl{G}}$ be the $v$-closure of the flag space $\Flag^\diamond_{G, -\mu}$ inside the Beilinson-Drinfeld Grassmannian $\Gr_{\cl{G}}$. Let $M^\loc_{\cl{G}, -\mu}$ be the unique flat projective normal $O_F$-scheme representing $\cl{M}^\loc_{\cl{G}, -\mu}$. 
\end{defi}

\begin{defi}\textup{(\cite[Definition 25.1.1]{SW20})} \label{defi:v_sheaf_integral_model}
    The $v$-sheaf theoretic integral model $\cl{M}_{\cl{G},b,\mu}^\ints$ of $\cl{M}_{\cl{G}, b, \mu}$ is a $v$-sheaf sending a perfectoid space $S$ over $O_{\breve{F}}$ to the set of pairs $(\cl{P}, \iota)$ where 
    \begin{enumerate}
        \item $\cl{P}$ is a local $\cl{G}$-shtuka whose Frobenius 
        $\varphi_{\cl{P}}$ is bounded by $\cl{M}_{\cl{G}, -\mu}^\loc$, and
        \item $\iota$ is a quasi-isogeny from $\cl{P}$ to $\cl{E}^b$. 
    \end{enumerate}
\end{defi}

\begin{rmk}
    In most cases of abelian type, it is known to be representable by a formal scheme over $O_{\breve{F}}$ (see \cite{PR22a}), but much less is known in general, especially in exceptional cases. In a recent ongoing work of Lee and Madapusi \cite{Si24}, the representability at hyperspecial levels is announced in full generality. 
\end{rmk}

In general, $\cl{M}_{\cl{G}, b, \mu}^{\ints}$ is known to be a prekimberlite and the underlying space $(\cl{M}_{\cl{G}, b, \mu}^{\ints})_\red$ is equal to the affine Deligne-Lusztig variety $X_\mu(b)$ (see \cite[2.61, 2.63]{Gle22a}). Then, for each closed point $x \in X_\mu(b)$, we can define a formal completion $(\cl{M}^\ints_{\cl{G},b,\mu})^\wedge_{/x}$ by \cite[Definition 4.18]{Gle24}: it is a $v$-subsheaf of $\cl{M}^\ints_{\cl{G},b,\mu}$ consisting of geometric points whose rank $1$ generalizations map to $x$ under the specialization map. When $\cl{M}^\ints_{\cl{G}, b, \mu}$ is representable by a formal scheme $\cl{S}$, $(\cl{M}^\ints_{\cl{G},b,\mu})^\wedge_{/x}$ is represented by the formal completion of $\cl{S}$ at $x$. 

In particular, the representability of $(\cl{M}^\ints_{\cl{G},b,\mu})^\wedge_{/x}$ is milder than the whole representability, and it is in fact proved when $\cl{G}$ is reductive by \cite{Bar22} and \cite{Ito25a}. For later application, we briefly review the description of \cite{Ito25a} in terms of prismatic $(\cl{G}, \mu)$-display.

\subsubsection{Prismatic displays and universal deformations} \label{sssec:prism_display}

From now on, we assume that $\cl{G}$ is reductive. Take an integral lift $\mu \colon \bb{G}_m \to \cl{G}_{O_{\breve{F}}}$. In \cite[Section 5.2]{Ito25b}, prismatic $(\cl{G},\mu)$-displays are introduced for $O_F$-prisms $(A, I)$ over $O_{\breve{F}}$. We refer to \cite[Section 2]{Ito25b} for $O_F$-prisms. An $O_F$-prism $(A, I)$ is over $O_{\breve{F}}$ if it is equipped with a map $O_{\breve{F}} \to A / I$. Since $A$ is $(p, I)$-complete, 
such a map uniquely lifts to $O_{\breve{F}} \to A$. 

Let $(A, I)$ be an $O_F$-prism over $O_{\breve{F}}$ with $I = (d)$. The display group attached to $(A, I)$ is 
\[
    \cl{G}_\mu(A, I) = \{ g \in G(A) \mid \mu(d) g \mu(d)^{-1} \in \cl{G}(A) \}
\]
and it admits a twisted right action 
\[
    \cl{G}(A) \times \cl{G}_\mu(A, I) \to \cl{G}(A), \quad 
    (X, g) \mapsto g^{-1} X \varphi(\mu(d) g \mu(d)^{-1})
\]
(see \cite[(5.2)]{Ito25b}). In order to get a description independent of $d$, we need to replace $\cl{G}(A)$ by
$
    \cl{G}(A)_I = \varprojlim_{d} \cl{G}(A)
$
under suitable transition maps. For a general $O_F$-prism $(A, I)$ over $O_{\breve{F}}$, we have \'{e}tale sheaves
\[
    \cl{G}_{\mu, (A, I)} \colon (A, I)_{\Prism, \acute{e}t} \to \Grp, \quad 
    \cl{G}_{\Prism, (A, I)} \colon (A, I)_{\Prism, \acute{e}t} \to \Set
\]
with a right action $\cl{G}_{\Prism, (A, I)} \circlearrowleft \cl{G}_{\mu, (A, I)}$ by patching $\cl{G}(A)_I \circlearrowleft \cl{G}_\mu(A, I)$ \'{e}tale locally. 

\begin{defi}\textup{(\cite[Definition 5.2.1]{Ito25b})}
    A prismatic $(\cl{G},\mu)$-display over an $O_F$-prism $(A,I)$ over $O_{\breve{F}}$ is a pair $(\cl{Q}, \alpha_{\cl{Q}})$ where $\cl{Q}$ is a $\cl{G}_{\mu, (A, I)}$-torsor over $(A, I)_{\acute{e}t}^{\op}$ and $\alpha \colon \cl{Q} \to \cl{G}_{\Prism, (A, I)}$ is a $\cl{G}_{\mu, (A, I)}$-equivariant map. 
\end{defi}

By pushing forward along the natural inclusion $\cl{G}_{\mu}(A, I) \subset \cl{G}(A)$, a $\cl{G}$-torsor $\cl{Q}_{(A, I)}$ over $A$ is attached to a prismatic $(\cl{G},\mu)$-display $\cl{Q}$ over $(A, I)$. Its pullback to $A / I$ is denoted by $\cl{Q}_{A / I}$. Similar constructions work over the absolute $O_F$-prismatic site $(R)_{\Prism, O_F}$ for an $O_{\breve{F}}$-algebra $R$. Here, $(R)_{\Prism, O_F}$ consists of $O_F$-prisms $(B, J)$ equipped with a structure map $g_{(B, J)} \colon R \to B / J$ endowed with the \'{e}tale topology. 

\begin{rmk} \label{rmk:display_vs_BKF}
    By \cite[Proposition 5.3.8]{Ito25b}, a prismatic $(\cl{G}, \mu)$-display $\cl{Q}$ over $(A, I)$ uniquely corresponds to a $\cl{G}$-BKF module $\cl{Q}_{\BK}$ of type $\mu$ over $(A, I)$. In particular, if $(A, I)$ is perfect, $\cl{Q}$ is equivalent to a local $\cl{G}$-shtuka over $\Spd(A / I)_{/O_F}$. By \cite[Proposition 5.3.4]{Ito25b}, there is a natural isomorphism $\varphi^* \cl{Q}_{\BK} \cong \cl{Q}_{(A, I)}$ of $\cl{G}$-torsors over $A$. 
\end{rmk}

\begin{exa} \textup{(\cite[Example 5.3.7]{Ito25b})}
    \label{rmk:banal_display_BKF}
    Fix a generator $d \in I$. For each $X \in \cl{G}(A)$, $\cl{Q}_X$ denotes the prismatic $(\cl{G}, \mu)$-display associated to the trivial $\cl{G}_{\mu, (A, I)}$-torsor and the map $\cl{G}_{\mu, (A, I)} \to \cl{G}_{\Prism, (A, I)}$ sending $1$ to $X$. Then, $(\cl{Q}_X)_{\BK}$ is associated to the map 
    \[  
        (\alpha_X)_\BK \colon \varphi^*\cl{G}_A[1/I] \cong \cl{G}_A[1/I] ,\quad 
        1 \mapsto \mu(d)X. 
    \]
    Moreover, the natural isomorphism $\alpha'\colon \varphi^*\cl{G}_A \cong \cl{G}_A$ associated to $\cl{Q}_X$ sends $1$ to $X$ by the proof of \cite[Proposition 5.3.4]{Ito25b}. In particular, $(\alpha_X)_\BK \circ \alpha'^{-1}$ sends $1$ to $\mu(d)$. 
\end{exa}

Fix a closed point $x \in X_\mu(b)(\ov{k})$. The local $\cl{G}$-shtuka at $x$ corresponds to a prismatic $(\cl{G}, \mu)$-display $\cl{Q}_x$ over $(O_{\breve{F}}, (\pi))$. We will introduce the universal deformation ring $R_{\cl{G}, \mu}$ of $\cl{Q}_x$. For simplicity, fix a maximal torus $T \subset G_{\breve{F}}$ such that $\mu \in X_*(T)$ and let $\Phi_{\mu < 0}$ denote the set of roots $\alpha$ such that $\langle \mu, \alpha \rangle = -1$. Then, 
\[
    (\mfr{S},\cl{E}) = (O_{\breve{F}}\llbracket t, u_\alpha \mid \alpha \in \Phi_{\mu<0} \rrbracket, \pi-t) ,\quad R_{\cl{G}, \mu} = \mfr{S}/\cl{E}
\]
is an (oriented) Breuil-Kisin $O_F$-prism associated with $R_{\cl{G},\mu}$. Let $\mfr{m}$ be the maximal ideal of $R_{\cl{G}, \mu}$ and let $R_{\cl{G}, \mu}^\wedge$ be the complete $\mfr{m}$-adic ring whose underlying ring is $R_{\cl{G}, \mu}$.

\begin{thm}\textup{\cite[Theorem 4.4.2, Theorem 5.3.5]{Ito25b}}
    There exist a prismatic $(\cl{G}, \mu)$-display $\mfr{Q}^\univ$ over $(R_{\cl{G}, \mu})_{\Prism, O_F}$ and an isomorphism of $v$-sheaves 
    \[
        (\cl{M}^\ints_{\cl{G},b,\mu})^\wedge_{/x} \cong \Spd(R_{\cl{G}, \mu}^\wedge)
    \]
    such that the universal local $\cl{G}$-shtuka over $(\cl{M}^\ints_{\cl{G},b,\mu})^\wedge_{/x}$ is sent to the one induced by $\mfr{Q}^\univ$. 
\end{thm}

Here, for each perfectoid $R_{\cl{G}, \mu}$-algebra $R$, $\mfr{Q}^\univ\vert_{(R)_{\Prism, O_F}}$ corresponds to a local $\cl{G}$-shtuka over $\Spd(R)_{/O_F}$. In this way, $\mfr{Q}^\univ$ induces a local $\cl{G}$-shtuka $\cl{P}^\univ$ over $\Spd(R_{\cl{G}, \mu})_{/O_F}$, not only over $\Spd(R_{\cl{G}, \mu}^\wedge)_{/O_F}$.

\subsection{Local representability under hyperspecial levels}
\label{ssec:locint}

In this section, we prove the local representability of integral models of local Shimura varieties 
under hyperspecial levels. 

Let $\cl{G}'$ be a parahoric group scheme with a morphism $\cl{G}' \to \cl{G}$. We consider the integral model $\cl{M}_{\cl{G}',b,\mu}^\ints$ at the level $\cl{G}'(O_F)$. By \Cref{prop:exadmit} and \Cref{lem:compperfcov}, $R_{\cl{G}, \mu}$ admits a very good cover. Thus, we may apply \Cref{thm:replevel} to $\cl{P}^\univ$. Let $\mfr{Y}$ be the flat moduli space of $\cl{G}'$-level structures on $\cl{P}^\univ$ and let $\mfr{Y}^\wedge$ be the $\mfr{m}$-adic completion of $\mfr{Y}$. 

\begin{thm}\label{thm:localrep}
    The fiber product $\cl{M}^\mrm{int}_{\cl{G}',b,\mu}\times_{\cl{M}^\mrm{int}_{\cl{G},b,\mu}} (\cl{M}^\mrm{int}_{\cl{G},b,\mu})^\wedge_{/x}$ is represented by $\mfr{Y}^{\wedge}$. In particular, for every $y \in \cl{M}_{\cl{G}',b,\mu}^\mrm{int}(\ov{k})$, the formal completion $(\cl{M}_{\cl{G}',b,\mu}^\mrm{int})^\wedge_{/y}$ is representable by a unibranch $p$-torsion free complete local Noetherian ring. 
\end{thm}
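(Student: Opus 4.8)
The plan is to deduce \Cref{thm:localrep} from \Cref{thm:replevel} applied in the local setting of \Cref{rmk:localcase}, together with the identification of formal completions of $\cl{M}^\mrm{int}_{\cl{G},b,\mu}$ with $R = O_{\breve E}\llbracket t_1,\dots,t_r\rrbracket$. First I would verify that the $v$-sheaf parametrizing $\cl{G}'$-level structures on the universal local $\cl{G}$-shtuka over $\cl{M}^\mrm{int}_{\cl{G},b,\mu}$ pulls back, along $(\cl{M}^\mrm{int}_{\cl{G},b,\mu})^\wedge_{/x} \cong \mrm{Spf}(R)$, to the $v$-sheaf of $\cl{G}'$-level structures on the universal local $\cl{G}$-shtuka over $\mrm{Spf}(R)$; this is immediate since the universal local $\cl{G}$-shtuka on $(\cl{M}^\mrm{int}_{\cl{G},b,\mu})^\wedge_{/x}$ is by hypothesis the restriction of the universal one, and the moduli functor of level structures is functorial in the base. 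Since $\cl{M}^\mrm{int}_{\cl{G}',b,\mu}$ represents (the $v$-sheaf version of) $\cl{G}'$-level structures on the universal local $\cl{G}$-shtuka over $\cl{M}^\mrm{int}_{\cl{G},b,\mu}$ — here one uses that for a minuscule cocharacter the level structure moduli space is already flat and bounded by the local model $M^\mrm{loc}_{\cl{G}',\mu}$, so that taking the flat closure does not change it — the fiber product in the statement is the $v$-sheaf of $\cl{G}'$-level structures over $\mrm{Spf}(R)$, and \Cref{thm:replevel} (via \Cref{rmk:localcase}) says that its flat closure is represented by the $p$-adic formal scheme $\mfr{F}$, proper over $\mrm{Spf}(R)$ with finite \'{e}tale generic fiber. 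Completing $\mfr{m}$-adically, where $\mfr{m}$ is the maximal ideal of $R$, gives that the fiber product is represented by $\mfr{F}^{(\mfr{m})}$.

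For the ``in particular'' clause, fix a closed point $x' \in \cl{M}^\mrm{int}_{\cl{G}',b,\mu}(\ov{\bb F}_p)$ lying over $x$. Then $(\cl{M}^\mrm{int}_{\cl{G}',b,\mu})^\wedge_{/x'}$ is a connected component of the formal completion at the finite fiber of $\mfr{F}^{(\mfr{m})}$ over the closed point, hence is the formal spectrum of a complete local ring. Properness of $\mfr{F}$ over $\mrm{Spf}(R)$ together with the fact that $S_j$ is topologically of finite type over $R$ (established in the proof of \Cref{thm:vdilatation}) shows this ring is complete local Noetherian; flatness of $\mfr{F}$ over $\bb{Z}_p$ gives $p$-torsion freeness. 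Finally, unibranchness follows because $\mfr{F}$ was constructed (Zariski locally) as a geometric quotient of a perfectoid formal scheme $\mfr{F}_\infty$ which is itself flat and, being relatively flatly representable via a $\cl{G}'$-subtorsor description, has normal — indeed integral perfectoid — local rings; a geometric quotient of a scheme with unibranch (e.g.\ normal) local rings by a finite group has unibranch local rings, since the branches at a point of the quotient correspond to orbits of branches upstairs and the fibers of $\mrm{Spf}(\mfr{F}_\infty) \to \mrm{Spf}(\mfr{F})$ over a geometric point are single $\Gamma$-orbits. Equivalently, one notes $\mfr{F}^\mrm{perf}$ is a geometric quotient of the perfect scheme $\mfr{F}_\infty^\mrm{perf}$, whose connected components are integral, so $\mfr{F}^\mrm{perf}$ has irreducible local rings; hence so does $\mfr{F}$, which is the unibranch condition.

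The main obstacle I expect is the first step: checking that for a \emph{minuscule} $\mu$ the full (unbounded) $v$-sheaf of $\cl{G}'$-level structures already coincides with its flat closure, so that $\cl{M}^\mrm{int}_{\cl{G}',b,\mu}$ — which by definition of the integral model of a local Shimura variety at level $\cl{G}'$ parametrizes level structures bounded by $M^\mrm{loc}_{\cl{G}',\mu}$ — really is identified with the flat moduli space produced by \Cref{thm:replevel}. This is a purely local comparison of two boundedness conditions (the one imposed by the flat closure and the one imposed by the local model), flagged in the introduction as something ``done purely locally''; it requires knowing that the local model $M^\mrm{loc}_{\cl{G}',\mu}$ is flat over $O_E$ and that the $v$-sheaf-theoretic specialization map sees no extra components, which one can extract from \cite{AGLR22}. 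A secondary subtlety is that $\mfr{F}$ is a priori only a $p$-adic formal scheme while $(\cl{M}^\mrm{int}_{\cl{G},b,\mu})^\wedge_{/x}$ carries the $\mfr{m}$-adic topology; this is handled, as in the statement, by passing to the $\mfr{m}$-adic completion $\mfr{F}^{(\mfr{m})}$ and using that $R$ is Noetherian so the two completions of a finite-type $R$-algebra agree on the punctual fibers.
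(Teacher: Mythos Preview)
Your overall strategy matches the paper's, and you correctly isolate the main obstacle: showing that the $M^{\mrm{loc}}_{\cl{G}',\mu}$-bounded locus $\cl{F}'$ agrees (after $\mfr{m}$-adic completion) with the flat closure $\mfr{F}^\diamond$. However, your proposed resolution of this obstacle is where the proposal falls short. The paper does \emph{not} deduce this from flatness of the local model alone. The inclusion $\mfr{F}^\diamond \hookrightarrow \cl{F}'$ is immediate (since $\cl{F}'$ is closed with $\cl{F}'_\eta=\cl{F}_\eta$), but for the reverse inclusion the paper invokes \cite[Proposition~3.4.1(1)]{PR22b}, which says that $(\cl{M}^{\mrm{int}}_{\cl{G}',b,\mu})^\wedge_{/x'}$ is topologically flat at every closed point $x'$; hence each such formal neighborhood already lies inside $\mfr{F}^\diamond$. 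A separate pointwise-to-global lemma (\Cref{lem:tabatclsd}) then upgrades this to the equality $(\mfr{F}^{(\mfr{m})})^\diamond_s=(\cl{F}'^{(\mfr{m})})_s$ of special fibers, whence the desired identification. Your suggestion that ``flatness of $M^{\mrm{loc}}_{\cl{G}',\mu}$ and no extra components under specialization'' can be extracted from \cite{AGLR22} is too vague: boundedness by the local model is a condition on the Frobenius, not a base-change condition, so flatness of $M^{\mrm{loc}}_{\cl{G}',\mu}$ does not by itself yield flatness of $\cl{F}'$. The input from \cite{PR22b} (which does ultimately rest on a $v$-sheaf local model diagram) together with \Cref{lem:tabatclsd} is the actual mechanism.

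Your unibranchness argument also diverges from the paper's and contains a gap. The paper simply cites \cite[Proposition~3.4.1(2)]{PR22b}. Your route via geometric quotients would require that the local rings of $\mfr{F}_\infty$ (or of $\mfr{F}_\infty^{\mrm{perf}}$) are unibranch, but ``integral perfectoid'' does not mean ``integral domain'': an integral perfectoid ring can have reducible spectrum, and the Zariski closure $Z\subset T$ produced in the proof of \Cref{thm:replevel} is not obviously locally irreducible. So the claim that connected components of $\mfr{F}_\infty^{\mrm{perf}}$ are integral, on which your argument rests, needs independent justification.
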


\begin{proof}
    Let $Z=\Sht_{\cl{G}'}\times_{\Sht_{\cl{G}},\cl{P}^\univ} \Spd(R_{\cl{G}, \mu})$ and let $Z^\bdd \subset Z$ be the closed subsheaf parametrizing $M_{\cl{G}',-\mu}^\loc$-bounded $\cl{G}'$-level structures. Since $\mfr{Y}$ represents the $v$-closure of $Z_\eta$ in $Z$, there is an inclusion $\mfr{Y}^\diamond \hookrightarrow Z^\bdd$. 

    First, we show that $Z^\bdd_s$ is represented by a perfect scheme. By \Cref{lem:fibSht} and \Cref{prop:repatinfty}, $Z_s$ is represented by a perfect scheme $Z_s^\red$. The $\cl{G}'$-level structure over $Z_s$ is represented by an $F$-crystal with $\cl{G}'$-structure $P'_s$ on $Z_s^\red$. By \cite[Lemma 1.22]{Zhu17}, the locus $Z_s^{\bdd,\red} \subset Z_s^\red$ where the Frobenius of $P'_s$ is bounded by $M_{\cl{G}',-\mu}^\loc$ is closed. For each geometric point $x\colon \Spa(C,C^+) \to Z_s$, the $\cl{G}'$-level structure is bounded by $M_{\cl{G}',-\mu}^\loc$ at $x$ if and only if the corresponding morphism $\Spec(C) \to Z_s$ factors through $Z_s^{\bdd,\red}$. Thus, $Z_s^\bdd$ is represented by $Z_s^{\bdd, \red}$. Since $\mfr{Y}$ is proper over $\Spf(R)$, $\mfr{Y}_s^\perf \hookrightarrow Z_s^\bdd$ is a closed immersion. 

    By \Cref{defi:v_sheaf_integral_model}, $\cl{M}^\mrm{int}_{\cl{G}',b,\mu}\times_{\cl{M}^\mrm{int}_{\cl{G},b,\mu}}(\cl{M}^\mrm{int}_{\cl{G},b,\mu})^\wedge_{/x}$ is isomorphic to $Z^\bdd \times_{\Spd(R_{\cl{G}, \mu})} \Spd(R_{\cl{G}, \mu}^\wedge)$. For every $y \in \cl{M}^\mrm{int}_{\cl{G}',b,\mu}(\ov{k})$, $(\cl{M}^\mrm{int}_{\cl{G'},b,\mu})^\wedge_{/y}$ is topologically flat by \cite[Proposition 3.4.1 (1)]{PR24}, so $(\cl{M}^\mrm{int}_{\cl{G'},b,\mu})^\wedge_{/y}$ is contained in $(\mfr{Y}^\wedge)^\diamond$ as we have $Z^\bdd_\eta=\mfr{Y}^\diamond_\eta$. Thus, we may apply \Cref{lem:tabatclsd} to the $\mfr{m}$-adic completion of $\mfr{Y}_s^\perf \hookrightarrow Z_s^\bdd$ and we see that $\mfr{Y}_s^\perf \hookrightarrow Z_s^\bdd$ is an isomorphism after the $\mfr{m}$-adic completion. Thus, $(\mfr{Y}^\wedge)^\diamond \hookrightarrow \cl{M}^\mrm{int}_{\cl{G}',b,\mu}\times_{\cl{M}^\mrm{int}_{\cl{G},b,\mu}}(\cl{M}^\mrm{int}_{\cl{G},b,\mu})^\wedge_{/x}$ is an isomorphism on each fiber over $\Spd(\bb{Z}_p)$, so it is an isomorphism by \cite[Lemma 12.11]{Sch17}. 

    Next, we prove the second claim. Let $y\in \cl{M}_{\cl{G}',b,\mu}^\ints(\ov{k})$ and take $x\in \cl{M}_{\cl{G},b,\mu}^\ints(\ov{k})$ to be the image of $y$. Let $\Spf(S) \subset \mfr{Y}$ be an affine open neighborhood of $y$ and let $S^+ \subset S[\tfrac{1}{p}]$ be the integral closure of $S$. Since $R_{\cl{G}, \mu}$ is normal and $(S[\tfrac{1}{p}],S^+)$ is \'{e}tale over $(R_{\cl{G}, \mu}[\tfrac{1}{p}],R_{\cl{G}, \mu})$, $S^+$ is normal by \Cref{cor:normaletloc}. Since $S$ is $p$-torsion free, reduced and excellent, $S\subset S^+$ is a finite extension. Let $\mfr{n}\subset S$ be the maximal ideal corresponding to $y$ and let $S^\wedge$ (resp.\ $S^{+,\wedge}$) be the $\mfr{n}$-adic completion of $S$ (resp.\ $S^+$). By \cite[Theorem 79]{Mat80}, $S^{+,\wedge}$ is normal. Since $S[\tfrac{1}{p}]=S^+[\tfrac{1}{p}]$ and $S\subset S^+$ is a finite extension, $S^\wedge$ is $p$-torsion free and $S^\wedge[\tfrac{1}{p}]=S^{+,\wedge}[\tfrac{1}{p}]$. By construction, $(\cl{M}^\ints_{\cl{G}',b,\mu})^\wedge_{/y}$ is represented by $S^\wedge$, so it is enough to show that $S^\wedge$ is unibranch. 

    By \cite[Proposition 18.6.12]{EGA4-4}, it is enough to show that $S^\wedge$ has a unique minimal prime. Since $S^\wedge$ is $p$-torsion free, the set of minimal primes of $S^\wedge$ is bijective to that of $S^\wedge[\tfrac{1}{p}] \cong S^{+,\wedge}[\tfrac{1}{p}]$. Let $S^{+,\wedge} = \prod_{i\in I} S_i^+$ be the decomposition into normal domains. Since $S^{+,\wedge}$ is finite over $S^\wedge$, $S_i^+$ is a $p$-torsion free complete local Noetherian ring. Now, we have $\Spa(S^\wedge)_\eta \cong \prod_{i\in I} \Spa(S_i^+)_\eta$. Since $\Spa(S_i^+)_\eta$ is nonempty for every $i\in I$, it follows from \cite[Proposition 3.4.1(2)]{PR24} that $I$ is a singleton. Thus, $S^{+,\wedge}$ is a normal domain and the claim follows. 
\end{proof}

The following lemma is needed in the above proof. We say that a perfect formal scheme over $\ov{\bb{F}}_p$ is formally of finite type if it is Zariski locally isomorphic to $\Spf(A)$ with $A=(A_0^\perf)^\wedge$ for some formally finite type $\ov{\bb{F}}_p$-algebra $A_0$. Note that such $A_0$ is an adic $\ov{\bb{F}}_p$-algebra obtained as the completion of a finite type $\ov{\bb{F}}_p$-algebra. 

\begin{lem}\label{lem:tabatclsd}
    Let $\mfr{Y}$ be a perfect formal scheme perfectly formally of finite type over $\ov{\bb{F}}_p$ and let $\mfr{Z} \subset \mfr{Y}$ be a closed perfect formal subscheme. If $(\mfr{Y}^\diamond)^\wedge_{/y} \subset \mfr{Z}^\diamond$ as subsheaves of $\mfr{Y}^\diamond$ for every $y\in \mfr{Y}(\ov{\bb{F}}_p)$, we have $\mfr{Y}=\mfr{Z}$. 
\end{lem}
    
\begin{proof}
    We may assume that $\mfr{Y}$ is affine. Let $\mfr{Y}=\Spf(A)$ and $\mfr{Z}=\Spf (A/I)$. Suppose that $I\neq 0$ and let $a\in I$ be a nonzero element. Since $a$ vanishes at every closed point of $\mfr{Y}$, we see that $a$ is topologically nilpotent. Let $f_1,\ldots,f_n \in A$ be generators of an ideal of definition of $A$. By \Cref{lem:perfdrep}, $\Spa(A/aA) \subset \Spa(A)$ is a proper closed subspace and its complement is the union of $R(\tfrac{f_1^k,\ldots,f_n^k}{a})$ for $k\geq 0$. Thus, there is some $k\geq 0$ such that $R(\tfrac{f_1^k,\ldots,f_n^k}{a}) \neq \phi$. The closure of $R(\tfrac{f_1^k,\ldots,f_n^k}{a})$ in $\Spa(A)^\an$ is contained in $R(\tfrac{f_1^{k+1},\ldots,f_n^{k+1}}{a})$, so in particular, it is away from $\Spd(A/aA)$. However, since the specialization map is closed, the image of the closure under the specialization map of $\Spa(A)$ is a closed subset of $\Spec(A_\red)$. In particular, it contains a closed point $y\in \Spec(A_\red)$. Then, we see that $(\mfr{Y}^\diamond)^\wedge_{/y}$ contains a point in $R(\tfrac{f_1^{k+1},\ldots,f_n^{k+1}}{a})$, and we get a contradiction. 
\end{proof}

\begin{rmk}
    We find obstacles in running similar arguments for the full representability of $\cl{M}^\ints_{\cl{G}',b,\mu}$. Suppose that $\cl{M}^\ints_{\cl{G},b,\mu}$ is representable and let $\Spf(R) \subset \cl{M}^\ints_{\cl{G},b,\mu}$ be an affine open subspace. Let $R_0$ be the underlying $p$-adic ring of $R$. First, we need to extend the universal local $\cl{G}$-shtuka over $\Spd(R)_{/O_F}$ to $\Spd(R_0)_{/O_F}$ and then we need to show that $R_0$ admits a good cover. If the local $\cl{G}$-shtuka comes from a $p$-divisible group, the first property follows from the algebraizability of $p$-divisible groups, and the second property follows from \Cref{lem:compperfcov} if the $p$-adic uniformization is available (cf.\ \cite[Corollary 4.9]{Mie20}). 
\end{rmk}

\subsection{Representability of local model diagrams}

Though we show the local representability of $\cl{M}^\ints_{\cl{G}',b,\mu}$, it is hard to study the singularities of $\cl{M}^\ints_{\cl{G}',b,\mu}$ in this way. One way to study the singularities is to construct a local model diagram. In this section, we propose a modification of the $v$-sheaf theoretic local model diagram in \cite[Section 4.9.3]{PR24} and show the representability of the diagram. 


\begin{const} \label{const:local_model_first}
    Let $\mfr{U} \subset \mfr{Y}$ be an affine formal open subscheme. For every map $\Spd(R,R^+) \to \mfr{U}^\diamond$ over $\Spd(O_{\breve{F}})$, it uniquely formalizes to a map $\Spd(R^+) \to \mfr{U}^\diamond$ and the $\cl{G}'$-level structure over $\Spd(R^+)$ corresponds to a $\cl{G}'$-BKF module $P'_{R^+}$ by \Cref{prop:GBKF}. The functor taking such $(R,R^+)$ to the set of the trivializations of $\varphi^*P'_{R^+}\vert_{R^+}$ is a $v$-sheaf over $\mfr{U}^\diamond$ by \cite[Theorem 8.7]{Sch17}. Let $\wtd{\mfr{U}}^\diamond$ denote that $v$-sheaf and let $\wtd{\mfr{Y}}^\diamond$ be a $v$-sheaf over $\mfr{Y}^\diamond$ obtained by gluing $\wtd{\mfr{U}}^\diamond$ for all $\mfr{U} \subset \mfr{Y}$. 
\end{const}

For technical reasons, we first work with the $p$-adic formal scheme $\mfr{Y}$. After establishing \Cref{prop:LMDrep}, we may take the $\mfr{m}$-adic completion or the formal completion at a closed point of $\mfr{Y}$ to recover a usual local model diagram. 

\begin{rmk}
    The difference from \cite{PR24} is that for each $(R, R^+)$ over $\mfr{Y}$, $\wtd{\mfr{Y}}^\diamond$ parametrizes a trivialization of a $\cl{G}'$-torsor over $R^+$ and $\mfr{\wtd{Y}}^\diamond$ is a $\cl{G}'^\diamond$-torsor over $\mfr{Y}$. In \cite{PR24}, a $\cl{G}'^\diamondsuit$-torsor over $\mfr{Y}$ is considered instead, which is bigger than $\mfr{\wtd{Y}}^\diamond$ and is not represented directly by formal schemes (see \cite[Conjecture 4.9.3]{PR24} for their precise formulation). 
\end{rmk}

Recall that $\cl{M}_{\cl{G}',-\mu}^\loc$ is the $v$-sheaf theoretic local model introduced in \cite{SW20}. Following \cite[Section 4.9.1]{PR24}, we construct a map $\wtd{\mfr{Y}}^\diamond \to \cl{M}^\loc_{\cl{G}',-\mu}$. 

\begin{lem} \label{lem:validity_of_qx}
    In the setting of \Cref{const:local_model_first}, let $t \colon \cl{G}'_{R^+} \cong \varphi^*P'_{R^+} \vert_{R^+}$ be a trivialization. It admits a lift $\wtd{t} \colon \cl{G}'_{W_{O_F}(R^{\flat +})} \cong \varphi^* P'_{R^+}$ over $W_{O_F}(R^{\flat+})$ and the composition 
    \[
        \varphi_{P'_{R^+}} \circ \wtd{t} \colon \cl{G}'_{W_{O_F}(R^{\flat +})}[1/\xi_{R^+}] \cong \varphi^* P'_{R^+}[1/\xi_{R^+}] \cong P'_{R^+}[1/\xi_{R^+}]
    \]
    provides a map $\Spd(R^+) \to \cl{M}_{\cl{G}', -\mu}^\loc$. Moreover, this map is independent of the choice of $\wtd{t}$. 
\end{lem}

\begin{proof}
    The existence of $\wtd{t}$ follows from the smoothness of $\cl{G}'$. By definition, $\varphi_{P'_{R^+}} \circ \wtd{t}$ corresponds to a map $\Spd(R^+) \to \Gr_{\cl{G}}$. By the proof of \Cref{thm:localrep}, $\varphi_{P'_{R^+}}$ is bounded by $\cl{M}_{\cl{G}', -\mu}^\loc$, so it implies that the map factors through $\cl{M}_{\cl{G}', -\mu}^\loc$. 
    
    It remains to show that the resulting map $\Spd(R^+) \to \cl{M}_{\cl{G}', -\mu}^\loc$ is independent of the choice of $\wtd{t}$. Suppose that we have another choice. It is written as $\wtd{t} \circ \ell_g$ for some $g \in \cl{G}'(W_{O_F}(R^{\flat +}))$ where $\ell_g$ denotes the left translation by $g$ and $g\vert_{R^{+}}$ is trivial. By \cite[Lemma 3.15, Theorem 7.22]{AGLR22}, the natural $L^+_{O_F}\cl{G}'$-action (see \cite[(4.29), Proposition 4.13]{AGLR22}) on $\cl{M}^\loc_{\cl{G}', -\mu}$ factors through $\cl{G}'^{\diamondsuit}$. Taking the composition with $\ell_g$ is the action of $g \colon \Spd(R^+) \to L^+_{O_F}\cl{G}'$. Since $g\vert_{R^+}$ is trivial, its action is trivial. Thus, $\varphi_{P'_{R^+}} \circ \wtd{t} \circ \ell_g$ induces the same map as $\varphi_{P'_{R^+}} \circ \wtd{t}$.
\end{proof}

\begin{defi} \label{defi:local_model_diagram}
    In the setting of \Cref{const:local_model_first}, let $\pi_x \colon \wtd{\mfr{Y}}^\diamond \to \mfr{Y}^\diamond$ be the natural map and let $q_x \colon \wtd{\mfr{Y}}^\diamond \to \cl{M}^\loc_{\cl{G}', -\mu}$ be the map sending $t$ to the composition $\Spd(R, R^+) \hookrightarrow \Spd(R^+) \to \cl{M}^\loc_{\cl{G}', -\mu}$ associated to $t$ under \Cref{lem:validity_of_qx}. We say that the diagram 
    \begin{center}
        \begin{tikzcd}
            & \wtd{\mfr{Y}}^\diamond \ar[ld, "\pi_x"'] \ar[rd, "q_x"] & \\
            \mfr{Y}^\diamond & &
            \cl{M}^\mrm{loc}_{\cl{G}',-\mu}
        \end{tikzcd}
    \end{center}
    is the $v$-sheaf theoretic local model diagram. 
\end{defi}

From now on, we will prove the representability of this diagram in the category of $p$-adic formal schemes. First, we prove the hyperspecial case $\cl{G}'=\cl{G}$. 

\begin{lem}
    There is a $\cl{G}$-torsor $Q$ over $R$ with a $\cl{G}$-equivariant morphism $Q\to M^\mrm{loc}_{\cl{G},-\mu}$ that represents the $v$-sheaf theoretic local model diagram for $\cl{G}'=\cl{G}$. 
\end{lem}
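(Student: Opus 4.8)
The plan is to recognise the $v$-sheaf diagram, evaluated on perfectoid test objects, as the diamond of a translation morphism from a trivialisable $\cl{G}$-torsor over $R$ to the local model $M^{\mrm{loc}}_{\cl{G},\mu}$. The two inputs I would use are: that $\cl{G}$ is reductive, so $M^{\mrm{loc}}_{\cl{G},\mu}$ is smooth and projective over $O_{\breve{E}}$ with its natural $\cl{G}$-action, $\cl{M}^{\mrm{loc}}_{\cl{G},\mu}=(M^{\mrm{loc}}_{\cl{G},\mu})^\diamond$, and the $L^+\cl{G}$-action on it factors through $\cl{G}$ (cf.\ \cite{AGLR22}); and the theorem of \cite{Ito23a} quoted above, that the universal local $\cl{G}$-shtuka $\cl{P}$ over $R$ is induced by the universal $\cl{G}$-$\mu^{-1}$-display on $(R)_\Prism$. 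Note that for $\cl{G}'=\cl{G}$ a $\cl{G}$-level structure on $\cl{P}$ is $\cl{P}$ itself, so $\mfr{F}=\mrm{Spf}(R)$ and $\mfr{F}^\diamond=\mrm{Spd}(R)$.

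First I would fix a prism $(\mfr{A},I)$ with $\mfr{A}$ regular complete local and $\mfr{A}/I\cong R$ which is weakly initial in $(R)_\Prism$ (the Breuil--Kisin-type prism as in \cite{Ito23a}); its residue field is $\ov{\bb{F}}_p$. Evaluating the universal display on it gives a $\cl{G}$-torsor $\cl{P}_{\mfr{A}}$ over $\mfr{A}$ with a $\mu^{-1}$-display structure. Since $\cl{G}$ is smooth affine and $\mfr{A}$ and $R$ are complete local with algebraically closed residue field, all $\cl{G}$-torsors over them are trivial, so I fix a trivialisation of $\cl{P}_{\mfr{A}}$; the display structure then records over $\mfr{A}/I=R$ the Hodge filtration, a section $\beta\colon \mrm{Spec}(R)\to M^{\mrm{loc}}_{\cl{G},\mu}$. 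I would set $Q=\underline{\mrm{Isom}}_R(\cl{G}_R,(\varphi^*\cl{P}_{\mfr{A}})|_R)$, a trivialisable $\cl{G}$-torsor over $R$, and let $\Phi\colon Q\to M^{\mrm{loc}}_{\cl{G},\mu}$ be the morphism which in the chosen trivialisation $Q\cong\cl{G}_R$ is $h\mapsto h\cdot\beta$; every $\cl{G}$-equivariant morphism out of a trivial torsor has this form, and this one is $\cl{G}$-equivariant by construction.

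Next I would match $(Q,\Phi)$ with \Cref{defi:LMDconst}. For an integral perfectoid ring $S$ with a map $\mrm{Spd}(S)\to\mrm{Spd}(R)$, the comparison in \cite{Ito23a} identifies the $\cl{G}$-torsor over $W(S^\flat)$ underlying $\cl{P}_S$ with the base change of $\cl{P}_{\mfr{A}}$ along a map of prisms $\mfr{A}\to W(S^\flat)$ over $R\to S$ (which exists, possibly after a faithfully flat refinement of $W(S^\flat)$ since $\mfr{A}$ is weakly initial; this does not affect what follows, by descent); as this map carries $I$ into $(\xi)$, it follows that $(\varphi^*\cl{P}_S)|_S\cong(\varphi^*\cl{P}_{\mfr{A}})|_R\otimes_R S$ functorially, hence $\wtd{\cl{F}}(S)=\mrm{Isom}_S(\cl{G}_S,(\varphi^*\cl{P}_S)|_S)=Q(S)$, so $\wtd{\cl{F}}\cong\widehat{Q}^\diamond$ over $\mrm{Spd}(R)$, matching the left legs. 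For the right legs, a trivialisation $\tau\in Q(S)$ lifts to a trivialisation of $\varphi^*\cl{P}_S$ over $W(S^\flat)$; composing with the Frobenius $\varphi_{\cl{P}}$ — the base change of that of $\cl{P}_{\mfr{A}}$, whose relative position with respect to $\mu$ over $\mfr{A}/I=R$ is $\beta$ — yields the point $\tau\cdot\beta|_S$ of $\cl{M}^{\mrm{loc}}_{\cl{G},\mu}$, independently of the lift since two lifts differ by an element of $\cl{G}(W(S^\flat))$ congruent to the identity modulo $\xi$, which acts trivially on $M^{\mrm{loc}}_{\cl{G},\mu}$. Hence the right leg of the $v$-sheaf diagram is $\Phi^\diamond$, so $(Q,\Phi)$ represents it. (Alternatively one could descend the $v$-sheaf map directly using full faithfulness of $(-)^\diamond$ on flat normal $p$-adic formal schemes over $O_{\breve{E}}$.)

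The step I expect to be the main obstacle is this last identification: reconciling the $v$-sheaf recipe — which is forced to work at the perfectoid level, involves a non-canonical lift of the trivialisation, and relies on the $L^+\cl{G}$-invariance of $\cl{M}^{\mrm{loc}}_{\cl{G},\mu}$ — with the scheme-level translation $h\mapsto h\cdot\beta$ over $R=\mfr{A}/I$, i.e.\ checking that the boundedness datum of the prismatic $\mu^{-1}$-display over $R$ is exactly the local model point and that the residual Frobenius data act only by $\cl{G}$-translation. Reductivity of $\cl{G}$ enters precisely here, through smoothness of $M^{\mrm{loc}}_{\cl{G},\mu}$, triviality of $\cl{G}$-torsors, and the jet-zero factorisation of the loop-group action; relaxing it would force one to carry the absolute weak normalisation and the higher-jet action, as in the general parahoric case discussed in the introduction.
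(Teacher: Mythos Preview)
Your overall architecture matches the paper's: define $Q$ as the $\cl{G}$-torsor $(\varphi^*\cl{P}_{\mfr{A}})|_R$ coming from the Breuil--Kisin prism $(\mfr{A},I)$, and take the map to $M^{\mrm{loc}}_{\cl{G},\mu}$ from the Hodge filtration of the display. The paper does exactly this, citing the descent theory of \cite[Section 6]{Ito23b}.

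The gap is in your verification step. You write that for perfectoid $S$ over $R$, the BKF torsor $(\varphi^*\cl{P}_S)|_S$ is identified with $(\varphi^*\cl{P}_{\mfr{A}})|_R\otimes_R S$ ``functorially'' via a map of prisms $\mfr{A}\to W(S^\flat)$, passing to a faithfully flat refinement if needed and then appealing to descent. But the Breuil--Kisin prism is only weakly initial, not initial: there are in general several maps $f\colon(\mfr{A},I)\to(B,J)$, and the induced identifications $\psi_f$ differ a priori. Both the descent from the Andr\'e cover back to $W(S^\flat)$ and the asserted functoriality in $S$ require proving that $\psi_f$ is independent of $f$. This is precisely the content of the paper's proof: it passes to the self-coproduct $(A^{(2)},I^{(2)})$, uses \cite[Proposition 6.4.1]{Ito23b} to see that the canonical isomorphism $\iota\colon p_1^*\mfr{Q}^{\mrm{univ}}_{(A,I)}\cong p_2^*\mfr{Q}^{\mrm{univ}}_{(A,I)}$ is given by an element of $\cl{G}(IK)$, hence reduces to the identity on $Q_{A^{(2)}/I^{(2)}}$, and concludes $\psi_{p_1}=\psi_{p_2}$; then for arbitrary $f,f'$ one factors through $f\times f'$ to get $\psi_f=\psi_{f'}$. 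Your phrase ``by descent'' hides exactly this computation, and the theorem of \cite{Ito23a} you invoke (that the shtuka is \emph{induced by} the display) does not by itself supply the needed canonicity of the comparison isomorphism.

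Relatedly, the obstacle you flag at the end --- matching the $v$-sheaf recipe for the right leg with the Hodge-filtration point $\beta$ --- is not the crux; once the canonical identification $Q_S\cong Q\otimes_R S$ is in place, that matching is immediate from the definition of the Hodge filtration of the display. The real work is the independence argument above.
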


\begin{proof}
    We prove by the descent theory of prismatic $(\cl{G},\mu)$-displays developed in \cite[Section 6]{Ito25b}. Recall the notation in \Cref{sssec:prism_display}. 

    As in \cite[Lemma 6.2.4 (2)]{Ito25b}, let $(A, I) = (\mfr{S}, \cl{E})$ and let $(A',I') \in (R_{\cl{G}, \mu})_{\Prism, O_F}$ be the coproduct $(A,I)\coprod (A,I)$. Let $p_i\colon (A,I) \to (A',I')$ ($i=1,2$) be the natural inclusion and let $m\colon (A',I') \to (A,I)$ be the multiplication map such that $m\circ p_1=m\circ p_2=\mrm{id}_{(A,I)}$. 

    For a morphism $f\colon (B,J)\to (B',J')$ in $(R_{\cl{G}, \mu})_{\Prism, O_F}$, let $\ov{f}\colon B/J\to B'/J'$ be the quotient map and let $\psi_f\colon \mfr{Q}^\univ_{B'/J'}\to \ov{f}^*\mfr{Q}^\univ_{B/J}$ be the map induced by the functoriality of $\mfr{Q}^\univ$. Note that $\ov{p}_1 = \ov{p}_2$ since $A / I = R_{\cl{G}, \mu}$. 

    Consider an isomorphism 
    \[
        \iota \colon p_1^*\mfr{Q}^\univ_{(A,I)}\cong \mfr{Q}^\univ_{(A', I')} \cong p_2^*\mfr{Q}^\univ_{(A,I)}
    \]
    induced by the functoriality of $\mfr{Q}^\univ$. It satisfies $m^*\iota = \id$, and this uniquely determines $\iota$ by \cite[Proposition 6.4.1]{Ito25b}. If we write $\mfr{Q}^\mrm{univ}_{(A,I)}=\mfr{Q}_Y$ with $Y\in \cl{G}(A)_I$, the proof of \cite[Proposition 6.4.1, p.1751]{Ito25b} (for the surjectivity) shows that $\iota$ is given by an element $g\in \cl{G}(IK)$ (equivalent to the claim $g \in G(dK)$ in loc. cit.). It implies that the map $\ov{p_1}^*\mfr{Q}^\univ_{A/I} \cong \ov{p_2}^*\mfr{Q}^\univ_{A/I}$ induced by $\iota$ is equal to $g_{(A',I')}^*\id_{\mfr{Q}^\univ_{A/I}}$. This means $\psi_{p_1}=\psi_{p_2}$. 

    Now, let $(B,J)$ be a perfect $O_F$-prism over $R_{\cl{G}, \mu}$. The $\cl{G}$-torsor $\mfr{Q}^\univ_{(B,J)}$ is canonically isomorphic to $\varphi^*P_{B/J}$ where $P_{B/J}$ is the corresponding $\cl{G}$-BKF module over $B/J$ (see \Cref{rmk:display_vs_BKF}). By Andr\'{e}'s flatness lemma (see \cite[Theorem 7.14]{BS22}), there exists a faithfully flat cover $(B,J)\to (B',J')$ that admits a morphism $f\colon (A,I)\to (B',J')$. 

    We show that $\psi_f$ is independent of the choice of $f$. Let $f'\colon (A,I)\to (B',J')$ be another map and let $(f, f')\colon (A',I')\to (B',J')$ be the induced map. Then, we have
    \[
        \psi_f=\psi_{(f,f')}\circ \ov{(f, f')}^*\psi_{p_1} =\psi_{(f,f')}\circ \ov{(f,f')}^*\psi_{p_2}=\psi_{f'}.
    \]
    Thus, $\psi_f$ is independent of $f$. This implies that $\psi_f$ descends to $\psi_{(B,J)}\colon \mfr{Q}^\univ_{B/J}\cong g_{(B,J)}^*\mfr{Q}^\univ_{A/I}$. This isomorphism is compatible with any transition map, so $\mfr{Q}^\univ_{A/I}$ represents $\wtd{\mfr{Y}}^\diamond$ for $\mfr{Y} = \Spd(R_{\cl{G}, \mu})$. Next, we show the representability of $q_x \colon \wtd{\mfr{Y}}^\diamond \to \cl{M}_{\cl{G}, -\mu}^\loc$. 

    In the hyperspecial case, we have $M^\loc_{\cl{G}, -\mu} \cong \cl{G} / \cl{P}_\mu$ where 
    \[
        \cl{P}_\mu = \{g \in \cl{G} \mid \lim_{t \to 0} \mu(t) g \mu(t)^{-1} \; \text{exists} \}. 
    \]
    Then, a $\cl{G}$-equivariant map $\mfr{Q}^\univ_{A/I} \to M^\loc_{\cl{G}, -\mu}$ corresponds to a $\cl{P}_\mu$-subtorsor $P$ of $\mfr{Q}^\univ_{A / I}$. As in \cite[Definition 5.4.1]{Ito25b}, the natural map $\cl{G}_\mu(A, I) \to \cl{G}(A)$ factors through $\cl{P}_\mu(A)$ and $\mfr{Q}^\univ_{(A, I)}$ is naturally equipped with a $\cl{P}_\mu$-subtorsor $P$, which is called the Hodge filtration in loc. cit. We will show that the associated $\cl{G}$-equivariant map $\mfr{Q}^\univ_{A/I} \to M^\loc_{\cl{G}, -\mu}$ represents $q_x$. 
    
    Fix an identification $\cl{G}(A)_I \cong \cl{G}(A)$ via $\cl{E} \in I$ and take $Y \in \cl{G}(A)$ so that $\mfr{Q}^\univ_{(A, I)} \cong \cl{Q}_Y$. By \Cref{rmk:banal_display_BKF}, the composition 
    \[
        \alpha \colon \mfr{Q}^\univ_{(A, I)}[1/I] \cong \varphi^*(\mfr{Q}^\univ_{(A, I)})_\BK[1/I] \cong (\mfr{Q}^\univ_{(A, I)})_\BK[1/I]
    \]
    is given by $g \mapsto \mu(\cl{E})g$. As in \Cref{lem:validity_of_qx}, $\alpha$ induces a relative position map $\mfr{Q}^\univ_{A/I} \to M_{\cl{G}, -\mu}^\loc$ and its explicit description implies that the relative position map corresponds to $\cl{P}_\mu \subset \cl{G}$, which is the Hodge filtration of $\cl{Q}_Y$. The same holds for any perfect $O_F$-prism over $(A, I)$, so the Hodge filtration represents $q_x$. 
\end{proof}

\begin{prop}\label{prop:LMDrep}
    Let $\Spf(S) \subset \mfr{Y}$ be an affine open formal subscheme. There is a finite type affine $S$-scheme $Q'_S$ with a $\cl{G}'$-action and a $\cl{G}'$-equivariant morphism $Q'_S\to M^\mrm{loc}_{\cl{G}',-\mu}$ such that the $p$-adic completion
    \begin{center}
        \begin{tikzcd}
            & (Q'_S)^\wedge \ar[ld] \ar[rd] & \\
            \Spf(S) & &
            M^\loc_{\cl{G}',-\mu}
        \end{tikzcd}
    \end{center}
    represents the $v$-sheaf theoretic local model diagram over $\Spd(S)$. 
\end{prop}

\begin{proof}
    Let $(R_\bullet,\Gamma_\bullet)$ be a good cover of $R_{\cl{G}, \mu}$. By the proof of \Cref{thm:dilatation} and \Cref{rmk:existenceY}, $S$ admits a maximal good cover $(S_\bullet,\Gamma_\bullet)$ constructed as in \Cref{cor:expaffrep}.
    
    First, we will construct $Q'_S$. The $\cl{G}'$-level structure over $\Spd(S_\infty)$ corresponds to a $\cl{G}'$-BKF module $P'_{S_\infty}$ by \Cref{prop:GBKF}. Let $Q'_{S_\infty} = \varphi^*P'_{S_\infty} \vert_{S_\infty}$. By construction, we have $\wtd{\mfr{Y}}^\diamond \times_{\mfr{Y}^\diamond} \Spd(S_\infty) \cong (Q'^\wedge_{S_\infty})^\diamond$ and a natural $\cl{G}'$-equivariant map $Q'_{S_\infty} \to Q\otimes_R S_\infty$. By \Cref{lem:G'subtors}, this map corresponds to an $\cl{H}$-subtorsor of $Q\otimes_R (S_\infty/\pi S_\infty)$. By \Cref{lem:densityU} and \Cref{cor:expaffrep}, $S_\infty/\pi S_\infty \cong \colim_{n\geq 0} S_n/\pi S_n$. Thus, the $\cl{H}$-subtorsor of $Q$ can be defined over $S_n/\pi S_n$ for some sufficiently large $n\geq 0$. Again by \Cref{lem:G'subtors}, it corresponds to a $\cl{G}'$-subtorsor $Q'_{S_n}$ of $Q\otimes_R S_n$ and we have $Q'_{S_\infty} \cong Q'_{S_n} \otimes_{S_n} S_\infty$ equivariantly under $\Gamma_\infty^n$. Let $Q'^{\wedge}_{S_n}$ be the $p$-adic completion of $Q'_{S_n}$. By \Cref{lem:bcquot}, $(Q'^{\wedge}_{S_n})^{\diamond}$ is a geometric quotient of $(Q'^{\wedge}_{S_\infty})^{\diamond}$ by $\Gamma_\infty^n$, so we see that $\wtd{\mfr{Y}}^\diamond\times_{\mfr{Y}^\diamond} \Spd(S_n) \cong (Q'^{\wedge}_{S_n})^{\diamond}$. 

    Now, we have a natural map $Q'_{S_n} \to Q \otimes_R S_n$ and it is an isomorphism outside $V(p)$. In particular, we have $\Gamma(Q,\cl{O})\otimes_R S_n \hookrightarrow \Gamma(Q'_{S_n}, \cl{O}) \hookrightarrow \Gamma(Q[\tfrac{1}{p}], \cl{O}) \otimes_{R} S_n$. Since $Q'_{S_n}$ is flat over $S_n$ and $S_\infty/S_n$ is $p$-torsion free, we have $\Gamma(Q'_{S_n}, \cl{O}) = \Gamma(Q'_{S_\infty}, \cl{O}) \cap \Gamma(Q[\tfrac{1}{p}], \cl{O}) \otimes_R S_n$ inside $\Gamma(Q[\tfrac{1}{p}], \cl{O}) \otimes_R S_\infty$. Since $\Gamma_\infty$ acts on $P'_{S_\infty}$, $\Gamma_\infty$ also acts on $Q'_{S_\infty}$ and the action is compatible with the natural action on $Q\otimes_R S_\infty$. Thus, $\Gamma(Q'_{S_\infty}, \cl{O})$ is $\Gamma_\infty$-stable in $\Gamma(Q[\tfrac{1}{p}], \cl{O}) \otimes_R S_\infty$, so $\Gamma(Q'_{S_n}, \cl{O})$ is $\Gamma_n$-stable in $\Gamma(Q[\tfrac{1}{p}], \cl{O}) \otimes_R S_n$. Let $A_S=\Gamma(Q'_{S_n}, \cl{O})^{\Gamma_n}$. By \Cref{prop:finquot}, $(Q'^{\wedge}_{S_n})^{\diamond} \to \Spd(A_S)$ is a geometric quotient by $\Gamma_n$, so $\wtd{\mfr{Y}}^\diamond \times_{\mfr{Y}^\diamond} \Spd(S) \cong \Spd(A_S)$. Since $S_n$ is finite over $S$ and $Q'_{S_n}$ is of finite type over $S_n$, $A_S$ is of finite type over $S$. Moreover, the $\Gamma(\cl{G}', \cl{O})$-comodule structure of $\Gamma(Q'_S, \cl{O})$ naturally descends to $A_S$, so $Q'_S = \Spec(A_S)$ admits a $\cl{G}'$-action. 

    Next, we show that $(Q'^{\wedge}_{S_n})^{\diamond} \to \cl{M}^\loc_{\cl{G}',-\mu}$ is representable. Since $R_{\cl{G}, \mu}$ is a complete local ring with an algebraically closed residue field, $Q$ is a trivial $\cl{G}$-torsor.  Since $\cl{G}_{\ov{k}} \to \cl{G}_{\ov{k}}/\cl{H}_{\ov{k}}$ is Zariski locally split, for every sufficiently small affine open subset $\Spf(B) \subset \Spf(S_n)$, we can lift the element of $(Q/\cl{H})(B/\pi B)$ corresponding to $Q'_B = Q'_{S_n} \otimes_{S_n} B$ to an element of $Q(B/\pi B)$. Since $Q$ is smooth, it can be lifted to $Q(B)$, so $Q'_B$ is a trivial $\cl{G}'$-torsor. Let $t\colon \Spec(B) \to Q'_B$ be a trivialization. The composition $\Spd(B) \to (Q'^{\wedge}_{B})^{\diamond} \to \cl{M}^{\loc}_{\cl{G}',-\mu}$ can be represented by a morphism $\Spf(B) \to M^{\loc}_{\cl{G}',-\mu}$ by \Cref{lem:uniqmapmaxlgood} since its generic fiber can be represented by 
    \[
        \Spa(B)_\eta \to Q'^\wedge_{B,\eta} \to Q^\wedge_{B,\eta} \to M^\loc_{\cl{G},-\mu,\eta}\cong M^\loc_{\cl{G}',-\mu,\eta}. 
    \]
    Thus, we get a $\cl{G}'$-equivariant morphism $Q'^\wedge_B \to M^{\loc}_{\cl{G}',-\mu}$ that represents $(Q'^{\wedge}_{B})^{\diamond} \to \cl{M}^\loc_{\cl{G}',-\mu}$. It is easy to see that this map is independent of the choice of $t$. Thus, we can glue these morphisms to get a morphism $Q'^\wedge_{S_n} \to M^\loc_{\cl{G}',-\mu}$. By construction, it represents $(Q'^{\wedge}_{S_n})^{\diamond} \to \cl{M}^\loc_{\cl{G}',-\mu}$. 

    Now, we have the following diagram. 
    \begin{center}
        \begin{tikzcd}
            Q'^\wedge_{S_n} \ar[r] \ar[dr] &  Q'_{S_n} \ar[r] \ar[d, dotted]&  Q_{S_n} \ar[d] \\
            & M^\loc_{\cl{G}',-\mu} \ar[r] & M^\loc_{\cl{G},-\mu}.
      \end{tikzcd}
    \end{center}
    Since $Q'_{S_n} \to Q_{S_n}$ and $M^\loc_{\cl{G}',-\mu} \to M^\loc_{\cl{G},-\mu}$ are isomorphisms outside $V(p)$, we may apply \cite[Tag 0ARB]{stacks-project} to $Q'_{S_n}$ and $Q_{S_n} \times_{M^\loc_{\cl{G},-\mu}} M^\loc_{\cl{G}',-\mu}$ to see that $Q'^\wedge_{S_n} \to Q_{S_n} \times_{M^\loc_{\cl{G},-\mu}} M^\loc_{\cl{G}',-\mu}$ is uniquely algebraizable. Thus, we can uniquely fill the middle vertical arrow $Q'_{S_n} \to M^\loc_{\cl{G}',-\mu}$. 

    Since $Q_{S_n} \to M^\loc_{\cl{G},\mu}$ is invariant under $\Gamma_n$ by construction, $Q'_{S_n} \to M^\loc_{\cl{G}',-\mu}$ is also invariant under $\Gamma_n$. Since $Q'_{S_n} \to Q'_S$ is a coarse moduli space by \cite[Theorem 3.1]{Con05}, the above map factors through $Q'_S$. Since $(Q'^\wedge_{S_n})^\diamond \to \Spd(A_S)$ is surjective, the map represents $q_x\vert_{(Q'^\wedge_S)^\diamond}$. 
\end{proof}

\begin{rmk}
    In general, it is expected that $Q'_S$ is a $\cl{G}'$-torsor and the $\mfr{m}$-adic completion of $Q'_S\to M^\mrm{loc}_{\cl{G}',-\mu}$ is formally smooth, as a generalization of Grothendieck-Messing theory. The above proof is not enough to establish these properties. 
\end{rmk}

\begin{cor}
    The $v$-sheaf theoretic local model diagram in \Cref{defi:local_model_diagram} is representable in the category of $p$-adic formal schemes. 
\end{cor}
\begin{proof}
    We keep the notation in \Cref{prop:LMDrep}. Let $A_S^\wedge$ be the $p$-adic completion of $A_S$. We will show that $\Spf(A_S^\wedge)$ can be glued to a $p$-adic formal scheme $\wtd{\mfr{Y}}$ over $\mfr{Y}$. Let $\Spf(S') \hookrightarrow \Spf(S)$ be a distinguished affine formal open subscheme. Let $S'_n=S_n \otimes_S S'$. Since $S_n$ is finite over $S$, $S'_n$ is $p$-adically complete, and $(S'_\bullet, \Gamma_\bullet)$ is a maximal good cover of $S'$ by \Cref{lem:maxlgoodcovZarloc}. By construction, we have $Q'_{S'_n} \cong Q'_{S_n} \otimes_{S_n} S'_n$. Since $S \to S'$ is flat, we see that  $A_{S'} \cong A_S \otimes_S S'$. In particular, $\Spf(A_{S'}^\wedge) \hookrightarrow \Spf(A_S^\wedge)$ is a distinguished open immersion. Thus, we can glue $\Spf(A_S^\wedge)$ for all $\Spf(S) \hookrightarrow \mfr{Y}$ to get a $p$-adic formal scheme $\wtd{\mfr{Y}}$. By construction, $\wtd{\mfr{Y}}$ represents $\wtd{\mfr{Y}}^\diamond$ and admits a map $\wtd{\mfr{Y}} \to M^\loc_{\cl{G}', -\mu}$ that represents $q_x$. 
\end{proof}

\subsection{Canonical integral models of Shimura varieties}

In this section, we construct canonical integral models of Shimura varieties under hyperspecial levels assuming the existence of those at hyperspecial levels. In this section, $(-)^\wedge$ denotes the $p$-adic completion. 

Let $(\mbf{G}, \mbf{X})$ be a Shimura datum satisfying the axiom (SV5). Let $\msf{K}=\msf{K}_p\msf{K}^p\subset \mbf{G}(\bb{A}_f)$ be a compact open subgroup such that $\msf{K}^p\subset \mbf{G}(\bb{A}_f^p)$ is a sufficiently small (or neat) compact open subgroup and $\msf{K}_p\subset \mbf{G}(\bb{Q}_p)$ is a parahoric subgroup. Let $\mbf{E}$ be the reflex field of $(\mbf{G},\mbf{X})$. Let $v$ be a place of $\mbf{E}$ over $p$ and set $E=\mbf{E}_v$. The Shimura variety $\Sh_\msf{K}(\mbf{G},\mbf{X})$ is expected to admit an integral model $\scr{S}_{\msf{K}}$ over $O_E$. In the abelian type, $\scr{S}_\msf{K}$ is constructed in \cite{Kis10}, \cite{KP18} and \cite{KPZ24} under mild assumptions. 

The theory of canonical integral models is developed to characterize a family $\{\scr{S}_{\msf{K}}\}_{\msf{K}^p}$. In \cite{PR24}, canonical integral models are characterized in terms of local shtukas. Let us briefly recall the theory of canonical integral models after \cite{PR24}. 

Let $G=\mbf{G}_{\bb{Q}_p}$ and let $\cl{G}$ be a parahoric group scheme of $G$ over $\bb{Z}_p$ with $\cl{G}(\bb{Z}_p)=\msf{K}_p$. Let $\{\mu\}$ be the geometric conjugacy class of minuscule cocharacters of $G$ corresponding to the opposite of $\mbf{X}$ due to our sign convention. 
Now, $E$ is the reflex field of $\{\mu\}$. Let $k$ be the residue field of $E$ and fix an algebraic closure $\ov{k}$ of $k$. Let $O_{\breve{E}}=W_{O_E}(\ov{k})$ and $\breve{E}=O_{\breve{E}}[\tfrac{1}{p}]$. 
The axioms of canonical integral models $\{\scr{S}_\msf{K}\}_{\msf{K}^p}$, which is a family of normal flat separated $O_E$-scheme of finite type, are as follows (see \cite[Theorem 1.3.2]{PR24}): 

\begin{enumerate}
    \setcounter{enumi}{-1}
    \item $\{\scr{S}_\msf{K}\}_{\msf{K}^p}$ admits finite \'{e}tale prime-to-$p$ Hecke correspondences. 
    \item For every discrete valuation ring $R$ of mixed characteristic over $O_E$, 
    \[(\varprojlim_{\msf{K}^p} \Sh_{\msf{K}}(\mbf{G},\mbf{X}))(R[\tfrac{1}{p}]) = (\varprojlim_{\msf{K}^p} \scr{S}_\msf{K})(R).\]
    \item The local $\cl{G}$-shtuka $\cl{P}_{\msf{K},E}$ over $\Sh_{\msf{K}}(\mbf{G},\mbf{X})_E$ extends to a local $\cl{G}$-shtuka $\cl{P}_\msf{K}$ over $\scr{S}_\msf{K}$. 
    \item For every $x\in \scr{S}_\msf{K}(\ov{k})$, there exists an isomorphism 
    \[\Theta_x \colon (\cl{M}^\ints_{\cl{G},b_x,\mu})^\wedge_{/x_0} \cong (\scr{S}^\wedge_{\msf{K}/x})^{\diamond}\]
    for a suitable $b_x \in G(\breve{\bb{Q}}_p)$ and a base point $x_0 \in \cl{M}_{\cl{G},b_x,\mu}^\ints(\ov{k})$ that is compatible with local $\cl{G}$-shtukas on each side. 
\end{enumerate}

Note that in the condition (2), $\cl{P}_\msf{K}$ (resp.\ $\cl{P}_{\msf{K},E}$) is defined over the $v$-sheafification of the $p$-adic completion $\scr{S}_\msf{K}^\wedge$ (resp.\ the adic space $\Sh_{\msf{K}}(\mbf{G},\mbf{X})_E^\ad$). 

Now, by \cite{DHKZ24b}, \cite{IKY23} and \cite{DY24}, most of the integral models $\scr{S}_\msf{K}$ constructed in \cite{Kis10}, \cite{KP18} and \cite{KPZ24} are verified to be canonical in the sense of \cite{PR24}. Here, we explain a relation between integral models at two different parahoric levels in terms of level structures on local shtukas, which can be expected from the diagram in \cite[Theorem VII]{DHKZ24a}. 

Let $\cl{G}'$ be another parahoric group scheme of $G$ over $\bb{Z}_p$ with a morphism $\cl{G}'\to \cl{G}$. It corresponds to a parahoric subgroup $\msf{K}'_p \subset \msf{K}_p$. Let $\msf{K}'=\msf{K}'_p\msf{K}^p$ for each $\msf{K}^p$. 

\begin{thm}\label{thm:canint}
    Suppose that $\msf{K}_p$ is hyperspecial and there exists a system of canonical integral models $\{\scr{S}_\msf{K}\}_{\msf{K}^p}$ of $\{\mrm{Sh}_\msf{K}(\mbf{G},\mbf{X})_E\}_{\msf{K}^p}$ in the sense of Pappas-Rapoport. 
    For every $\msf{K}^p$, there is a normal flat separated algebraic space $\scr{S}_\msf{K'}$ of finite type over $O_E$ with $\scr{S}_{\msf{K}', E} \cong \mrm{Sh}_{\msf{K'}}(\mbf{G},\mbf{X})_E$ such that 
    \begin{enumerate}
        \item $\scr{S}_{\msf{K'}}$ is proper and surjective over $\scr{S}_{\msf{K}}$, and
        \item $\scr{S}_{\msf{K'}}^\wedge$ is the flat moduli space of $\cl{G}'$-level structures on $\cl{P}_\msf{K}$. 
    \end{enumerate}
    Moreover, $\{\scr{S}_\msf{K'}\}_{\msf{K}^p}$ satisfies the above axioms (0)-(3) of canonical integral models. 
\end{thm}

\begin{proof}
    Let $x\in \scr{S}_\msf{K}(\ov{k})$ be a closed point. By the axiom (3), we have $(\scr{S}^\wedge_{\msf{K}/x})^\diamond \cong (\cl{M}_{\cl{G},b_x,\mu}^\ints)^\wedge_{/x_0}$. By \cite{Bar22} and \cite{Ito25a}, $(\cl{M}_{\cl{G},b_x,\mu}^\ints)^\wedge_{/x_0}$ is represented by $R_{\cl{G}, \mu} \cong O_{\breve{F}}\llbracket T_1,\ldots,T_r \rrbracket$ for some $r\geq 0$. Note that $E$ is unramified over $\bb{Q}_p$ since $G$ splits over $\breve{\bb{Q}}_p$. Since $\scr{S}_\msf{K}$ is normal, flat and of finite type, it follows from \cite[Proposition 18.4.1]{SW20} that $(\scr{S}_\msf{K})^\wedge_{/x} \cong \Spf(O_{\breve{F}}\llbracket T_1,\ldots,T_r \rrbracket)$. Thus, $\scr{S}_\msf{K}$ is smooth over $O_E$. 

    By \Cref{prop:smoothadmit}, we may apply \Cref{thm:replevel} to the local $\cl{G}$-shtuka $\cl{P}_\msf{K}$. Let $\cl{S}_{\msf{K}'}$ be the flat moduli space of $\cl{G}'$-level structures on $\cl{P}_\msf{K}$. Let $x\in \scr{S}_\msf{K}(\ov{k})$ be a closed point and take $b_x$ and $x_0$ as in the axiom (3). We apply \Cref{thm:localrep} after replacing $\cl{P}^\univ$ to the one induced from $\cl{P}_\msf{K}$ along the pullback $\Spec(R) \to \scr{S}_\msf{K}$, which does not matter in the proof. Then, $\cl{M}^\mrm{int}_{\cl{G}',b_x,\mu}\times_{\cl{M}^\mrm{int}_{\cl{G},b_x,\mu}} (\cl{M}^\mrm{int}_{\cl{G},b_x,\mu})^\wedge_{/x_0}$ is represented by $\mfr{Y}^{\wedge}$. By construction, we have a map $\mfr{Y}^\diamond \to \cl{S}_{\msf{K}'}^\diamond$. Since there is a natural morphism $\mfr{Y}_\eta \to \cl{S}_{\msf{K}',\eta}$ of adic spaces by \cite[Lemma 15.6]{Sch17}, that map is uniquely represented by a morphism $\mfr{Y} \to \cl{S}_{\msf{K}'}$ by \Cref{lem:uniqmapmaxlgood}. Thus, we have a morphism $\mfr{Y} \to \cl{S}_{\msf{K}'} \times_{\scr{S}_\msf{K}^\wedge} \Spf(R)$. Since $\Spec(R) \to \scr{S}_\msf{K}$ is flat, $\cl{S}_{\msf{K}'} \times_{\scr{S}_\msf{K}^\wedge} \Spf(R)$ is flat over $O_{\breve{F}}$. Then, $\mfr{Y}^\diamond$ and $\cl{S}_{\msf{K}'}^\diamond \times_{(\scr{S}_\msf{K}^\wedge)^\diamond} \Spd(R)$ are both thick closed subsheaves of $\Sht_{\cl{G}'}\times_{\Sht_{\cl{G}}, \cl{P}_\msf{K}} \Spd(R)$, so it follows from the uniqueness of thick closures that $\mfr{Y}^\diamond \cong \cl{S}_{\msf{K}'}^\diamond \times_{(\scr{S}_\msf{K}^\wedge)^\diamond} \Spd(R)$. Thus, 
    \[\cl{S}_{\msf{K}'}^\diamond \times_{(\scr{S}_{\msf{K}}^\wedge)^\diamond}(\scr{S}_{\msf{K}/x}^\wedge)^\diamond \cong \cl{M}^\ints_{\cl{G}',b_x,\mu}\times_{\cl{M}^\ints_{\cl{G},b_x,\mu}} (\cl{M}^\ints_{\cl{G},b_x,\mu})^\wedge_{/x_0}.\]
    In particular, for every $y\in \cl{S}_{\msf{K}'}(\ov{k})$, there exists an isomorphism $(\cl{S}_{\msf{K}'/y}^\wedge)^\diamond \cong (\cl{M}^\ints_{\cl{G}',b_y,\mu})^\wedge_{/y_0}$ with $b_y=b_x$ and $y_0 \in \cl{M}^\ints_{\cl{G}',b_x,\mu}(\ov{k})$ where $x\in \scr{S}_\msf{K}(\ov{k})$ is the image of $y$. 

    Now, since $\cl{S}_{\msf{K}',\eta}$ is finite etale over $(\scr{S}_\msf{K}^\wedge)_\eta$, $\cl{S}_{\msf{K}',\eta}$ is smooth, so in particular, normal. It follows from \cite[Lemma 2.13]{AGLR22} and \Cref{lem:uniqmapmaxlgood} that $\cl{S}_{\msf{K}'}$ is absolutely weakly normal. Thus, it follows from \cite[Proposition 3.4.1 (2)]{PR24} and \cite[Proposition 2.39]{AGLR22} that $\cl{S}_{\msf{K}'}$ is normal.

    Next, we show that $\cl{S}_{\msf{K}',\eta} \to (\scr{S}_{\msf{K}}^\wedge)_\eta$ is the pullback of $\Sh_{\msf{K}'}(\mbf{G},\mbf{X})_E \to \Sh_{\msf{K}}(\mbf{G},\mbf{X})_E$. By construction in \cite[Proposition 4.1.2]{PR24}, $\cl{P}_{\msf{K},E}$ corresponds to the pro-\'{e}tale $\msf{K}_p$-cover
    \[
        \varprojlim_{\msf{K}''_p \subset \msf{K}_p} \Sh_{\msf{K}''_p\msf{K}^p}(\mbf{G},\mbf{X}) \to \Sh_\msf{K}(\mbf{G},\mbf{X}). 
    \]
    By the proof of \Cref{lem:fibSht}, $\Sht_{\cl{G}',\eta}\times_{\Sht_{\cl{G},\eta},\cl{P}_{\msf{K},E}} \Sh_\msf{K}(\mbf{G},\mbf{X})_E^\ad$ classifies $\msf{K}'_p$-subcovers of that pro-\'{e}tale $\msf{K}_p$-cover, and $\Sh_{\msf{K}''_p\msf{K}^p}(\mbf{G},\mbf{X})$ is isomorphic to $(\varprojlim_{\msf{K}''_p} \Sh_{\msf{K}''_p\msf{K}^p}(\mbf{G},\mbf{X}))/\msf{K}''_p$. Thus, we have $\Sht_{\cl{G}',\eta}\times_{\Sht_{\cl{G},\eta},\cl{P}_{\msf{K},E}} \Sh_\msf{K}(\mbf{G},\mbf{X})_E^\ad \cong \Sh_{\msf{K}'}(\mbf{G},\mbf{X})_E^\ad$ and we get the claim. 

    Finally, we construct an algebraic model $\scr{S}_{\msf{K}'}$ of $\cl{S}_{\msf{K}'}$. Let $U\subset \scr{S}_{\msf{K}}$ be an arbitrary affine open subset and let $U=\Spec(A)$. Let $A[\tfrac{1}{p}] \to B$ be the finite \'{e}tale morphism representing $\Sh_{\msf{K}'}(\mbf{G},\mbf{X})_E \to \Sh_\msf{K}(\mbf{G},\mbf{X})_E$ over $U_\eta$. Let $B^+ \subset B$ be the integral closure of $A$. Let $\cl{S}_{\msf{K}',U} \subset \cl{S}_{\msf{K}'}$ be the open formal subscheme over $U^\wedge \subset \scr{S}_{\msf{K}}^\wedge$. Since $\cl{S}_{\msf{K}',U}$ admits a maximal good cover by construction, it follows from \Cref{lem:propoftmaxlgood} that $\Gamma(\cl{S}_{\msf{K}',U,\eta}, \cl{O}^{\circ \circ}) \hookrightarrow \Gamma(\cl{S}_{\msf{K}',U}, \cl{O}) \hookrightarrow \Gamma(\cl{S}_{\msf{K}',U,\eta}, \cl{O}^{+}) \cong (B^+)^{\wedge}$. Thus, there is a finite $A$-subalgebra $B_0\subset B^+$ such that $B_0[\tfrac{1}{p}]=B$ and there is a homomorphism $B_0^\wedge \to \Gamma(\cl{S}_{\msf{K}',U}, \cl{O})$. By \Cref{prop:formalmod}, $\cl{S}_{\msf{K}',U} \to \Spf(B_0^\wedge)$ is a formal modification. By \cite[Theorem 3.2]{Art70}, there is a unique proper morphism $\scr{S}_{\msf{K}',U} \to \Spec(B_0)$ of algebraic spaces that is an isomorphism outside $V(p)$ and induces $\cl{S}_{\msf{K}',U} \to \Spf(B_0^\wedge)$ by taking the $p$-adic completion. It is easy to see from the uniqueness of $\scr{S}_{\msf{K}',U}$ that $\scr{S}_{\msf{K}',U}$ is independent of the choice of $B_0 \subset B$ and its construction is functorial in $U$. In particular, we can glue them to obtain an algebraic space $\scr{S}_{\msf{K}'}$ over $\scr{S}_\msf{K}$ such that $\scr{S}_{\msf{K}'}[\tfrac{1}{p}] \cong \Sh_{\msf{K}'}(\mbf{G},\mbf{X})_E$ and $\scr{S}_{\msf{K}'}^\wedge \cong \cl{S}_{\msf{K}'}$. 
    
    We prove that $\scr{S}_{\msf{K}'}$ is normal. Take an \'{e}tale cover $V \to \scr{S}_{\msf{K}'}$ from a scheme $V$ and consider the $p$-adic completion $V^\wedge \to \cl{S}_{\msf{K}',U}$. Since any $p$-adically completely \'{e}tale homomorphism can be written as the $p$-adic completion of an \'{e}tale homomorphism by \cite{Elk73}, it follows from \cite[Theorem 79]{Mat80} and the normality of $\cl{S}_{\msf{K}'}$ that $V^\wedge$ is normal. For any affine open subscheme $\Spec(C) \subset V$, $C\to C^\wedge \times C[\tfrac{1}{p}]$ is faithfully flat, so $C$ is normal by \cite[Corollary 21.3]{Mat80}. Thus, $V$ is normal, and $\scr{S}_{\msf{K}'}$ is normal. Moreover, since $\cl{S}_{K',U}$ is $p$-torsion free, $V^\wedge$ is $p$-torsion free. Thus, $C$ is $p$-torsion free and $V$ is flat over $O_E$. 
 
    Now, we have an integral model $\scr{S}_{\msf{K}'}$ of $\Sh_{\msf{\msf{K}'}}(\mbf{G},\mbf{X})_E$. By construction, $\scr{S}_{\msf{K}'}$ is proper over $\scr{S}_\msf{K}$. Since $\scr{S}_\msf{K}$ is flat over $O_E$ and $\Sh_{\msf{K}'}(\mbf{G},\mbf{X})_E \to \Sh_{\msf{K}}(\mbf{G},\mbf{X})_E$ is surjective, $\scr{S}_{\msf{K}'} \to \scr{S}_\msf{K}$ is surjective. Thus, the condition (1) and (2) is verified for $\scr{S}_{\msf{K}'}$. We will verify that $\{\scr{S}_{\msf{K}'}\}_{\msf{K}^p}$ satisfies the axioms (0)-(3). 

    First, we check the axiom (1). Let $R$ be a discrete valuation ring of mixed characteristic over $O_E$. Since $\scr{S}_{\msf{K}'}$ is proper over $\scr{S}_\msf{K}$, it follows from the valuative criterion that any element $y\in \Sh_{\msf{K}'}(\msf{G},\msf{X})(R[\tfrac{1}{p}])$ can be uniquely lifted to $\scr{S}_{\msf{K}'}(R)$ as soon as we have a lift to $\scr{S}_\msf{K}(R)$ of the image of $y$ in $\Sh_{\msf{K}}(\msf{G},\msf{X})(R[\tfrac{1}{p}])$. Thus, the extension property of $\{\scr{S}_{\msf{K}'}\}_{\msf{K}^p}$ follows from that of $\{\scr{S}_{\msf{K}}\}_{\msf{K}^p}$. The axiom (2) follows from the construction because the universal $\cl{G}'$-level structure over $\cl{S}_{\msf{K}'}$ is the unique extension of $\cl{P}_{\msf{K}',E}$. The axiom (3) is already proved for $\cl{S}_{\msf{K}'}$. Thus, it is enough to check that $\{\scr{S}_{\msf{K}'}\}_{\msf{K}^p}$ admits finite \'{e}tale prime-to-$p$ Hecke correspondences. 

    Let $\msf{K}^{p}_i$ ($i=0,1$) be sufficiently small (or neat) compact open subgroups with $g \msf{K}^p_0 g^{-1} \subset \msf{K}_p^1$ for $g \in G(\bb{A}_f^p)$. Let $\msf{K}_i=\msf{K}_p\msf{K}^p_i$ (resp.\ $\msf{K}'_i=\msf{K}'_p\msf{K}^p_i$) for $i=0,1$. By assumption, we have a finite \'{e}tale prime-to-$p$ Hecke correspondence $\scr{S}_{\msf{K}_0} \to \scr{S}_{\msf{K}_1}$ associated with $g$. By \cite[Theorem 2.7.7, Proposition 4.1.2]{PR24}, $\cl{P}_{\msf{K}_0}$ is isomorphic to the pullback of $\cl{P}_{\msf{K}_1}$ along the Hecke correspondence of $g$. Thus, we have a closed immersion $\cl{S}_{\msf{K}'_0}^\diamond \to \cl{S}_{\msf{K}'_1}^\diamond \times_{(\scr{S}_{\msf{K}_1}^\wedge)^\diamond} (\scr{S}_{\msf{K}_0}^\wedge)^\diamond$ by construction. Since $\scr{S}_{\msf{K}_0} \to \scr{S}_{\msf{K}_1}$ is finite \'{e}tale, $\scr{S}_{\msf{K}'_1} \times_{\scr{S}_{\msf{K}_1}} \scr{S}_{\msf{K}_0}$ is normal and flat over $O_E$. In particular, $\cl{S}_{\msf{K}'_1}^\diamond \times_{(\scr{S}_{\msf{K}_1}^\wedge)^\diamond} (\scr{S}_{\msf{K}_0}^\wedge)^\diamond$ is thick, so $\cl{S}_{\msf{K}'_0}^\diamond \to \cl{S}_{\msf{K}'_1}^\diamond \times_{(\scr{S}_{\msf{K}_1}^\wedge)^\diamond} (\scr{S}_{\msf{K}_0}^\wedge)^\diamond$ is an isomorphism. By \cite[Proposition 18.4.1]{SW20}, $\cl{S}_{\msf{K}'_0} \to \cl{S}_{\msf{K}'_1} \times_{\scr{S}_{\msf{K}_1}^\wedge} \scr{S}_{\msf{K}_0}^\wedge$ is an isomorphism, so it follows from \cite[Theorem 3.2]{Art70} that $\scr{S}_{\msf{K}'_0} \to \scr{S}_{\msf{K}'_1} \times_{\scr{S}_{\msf{K}_1}} \scr{S}_{\msf{K}_0}$ is an isomorphism. In particular, we obtain a finite \'{e}tale Hecke correspondence $\scr{S}_{\msf{K}'_0} \to \scr{S}_{\msf{K}'_1}$ associated with $g$. 
\end{proof}

As a result, we obtain the following corollary. We keep the notation in \Cref{thm:canint}. 

\begin{cor}
    The $p$-adic completion $\scr{S}_{\msf{K}'}^\wedge$ represents the $v$-sheaf parametrizing $\cl{G}'$-level structures on $\cl{P}_\msf{K}$ bounded by $\cl{M}_{\cl{G}',- \mu}^\loc$. Moreover, $(\cl{M}^\ints_{\cl{G}',b,\mu})^\wedge_{/y}$ is representable by a normal $p$-torsion free complete local Noetherian ring if $b=b_x$ and $y$ is over $x_0$ for some $x\in \scr{S}_{\msf{K}}(\ov{k})$. 
\end{cor}
\begin{proof}
    The claim follows from $\cl{S}_{\msf{K}'}^\diamond \times_{(\scr{S}_{\msf{K}}^\wedge)^\diamond}(\scr{S}_{\msf{K}/x}^\wedge)^\diamond \cong \cl{M}^\ints_{\cl{G}',b_x,\mu}\times_{\cl{M}^\ints_{\cl{G},b_x,\mu}} (\cl{M}^\ints_{\cl{G},b_x,\mu})^\wedge_{/x_0}$. 
\end{proof}


Even if $\scr{S}_{\msf{K}', E}$ is a scheme and $\scr{S}_{\msf{K'}, \bb{F}_p}$ is locally projective over a scheme $\scr{S}_{\msf{K},\bb{F}_p}$, it does not directly follow that $\scr{S}_{\msf{K}'}$ is a scheme (for example, see \cite[Section 5.2]{Mat15}). We will show that $\scr{S}_{\msf{K}'}$ is a canonical integral model of $\Sh_{\msf{K}'}(\mbf{G},\mbf{X})_{E}$ if there exists one, by partially following the proof of uniqueness of canonical integral models (see \cite[Theorem 7.1.7]{Pap23}). 

\begin{prop} \label{prop:equalcan}
    Under the situation in \Cref{thm:canint}, if there is a system of canonical integral models $\{S_{\msf{K}'}\}$ of $\{\Sh_{\msf{K}'}(\mbf{G}, \mbf{X})_E\}$, we have an isomorphism $S_{\msf{K}'} \cong \scr{S}_{\msf{K}'}$.
\end{prop}

\begin{proof}
    By \cite[Corollary 4.3.2]{PR24}, there is a natural map $S_{\msf{K}'} \to \scr{S}_{\msf{K}}$. Since $\cl{P}_{\msf{K}'}$ is a $\cl{G}'$-level structure on $\cl{P}_{\msf{K}}$ over $(S_{\msf{K}'}^\wedge)^\diamond$, we have a map $(S_{\msf{K}'}^\wedge)^\diamond \to \Sht_{\cl{G}'}\times_{\Sht_{\cl{G}}, \cl{P}_{\msf{K}}} (\scr{S}_\msf{K}^\wedge)^\diamond$. Since $S_{\msf{K}'}$ is flat over $O_E$, $(S_{\msf{K}',\eta}^\wedge)^\diamond$ is dense in $(S_{\msf{K}'}^\wedge)^\diamond$ (see \cite[Lemma 4.4]{Lou20}), so we have a map $(S_{\msf{K}'}^\wedge)^\diamond \to (\cl{S}_{\msf{K'}})^\diamond$. By \cite[Proposition 18.4.1]{SW20}, it is representable by a morphism $S_{\msf{K}'}^\wedge \to \cl{S}_{\msf{K}'}$. 
    
    Let $U\subset \scr{S}_\msf{K}$ be an affine open subset and let $U=\Spec(A)$. Let $A[\tfrac{1}{p}] \to B$ be the finite \'{e}tale morphism representing $\Sh_{\msf{K}'}(\mbf{G},\mbf{X})_E \to \Sh_\msf{K}(\mbf{G},\mbf{X})_E$ over $U_\eta$. Let $B^+ \subset B$ be the integral closure of $A$. Let $S_{\msf{K}',U} \subset S_{\msf{K}'}$ be the inverse image of $U$. Since $S_{\msf{K}',E} \cong \Sh_{\msf{K}'}(\mbf{G}, \mbf{X})_E$, we have $p^N\cdot B^+ \subset \Gamma(S_{\msf{K}',U},\cl{O})$ for some $N\geq 0$. By setting $B_0 = A+p^N\cdot B^+$ for sufficiently large $N\geq 0$, we get a finite $A$-algebra $B_0$ admitting $S_{\msf{K}',U} \to \Spec(B_0)$ and $\scr{S}_{\msf{K}',U} \to \Spec(B_0)$ that are isomorphisms outside $V(p)$. By \cite[Tag 0ARB]{stacks-project}, $S_{\msf{K}',U}^\wedge \to \cl{S}_{\msf{K}',U}$ is uniquely algebraizable to $S_{\msf{K}',U} \to \scr{S}_{\msf{K}',U}$. By gluing these morphisms, we see that $S_{\msf{K}'}^\wedge \to \cl{S}_{\msf{K}'}$ is uniquely algebraizable to $S_{\msf{K}'} \to \scr{S}_{\msf{K}'}$. We will show that this is an isomorphism. 

    Let $y\in S_{\msf{K}',U}(\ov{k})$ be a closed point and let $x\in \scr{S}_{\msf{K}}(\ov{k})$ be the image of $y$. By \cite[(4.7.1), (4.8.2)]{PR24}, we have the following commutative diagram for a unique $y'\in \scr{S}_{\msf{K}'}(\ov{k})$. 
    \begin{center}
        \begin{tikzcd}
            (S_{\msf{K}'/y}^\wedge)^\diamond \ar[d] \ar[r, "\cong"] & (\cl{M}^\ints_{\cl{G}',b_y,\mu})^\wedge_{/y_0} \ar[d] & (\scr{S}_{\msf{K}'/y'}^\wedge)^\diamond \ar[l,"\cong"'] \ar[d] \\
            (\scr{S}_{\msf{K}/x}^\wedge)^\diamond  \ar[r, "\cong"] & (\cl{M}^\ints_{\cl{G},b_x,\mu})^\wedge_{/x_0} & (\scr{S}_{\msf{K}/x}^\wedge)^\diamond \ar[l,"\cong"'] 
        \end{tikzcd}
    \end{center}
    Since $(S_{\msf{K}'}^\wedge)^\diamond \to (\scr{S}_{\msf{K}'}^\wedge)^\diamond$ is induced by the $\cl{G}'$-level structure $\cl{P}_{\msf{K'}}$, the isomorphism $(S_{\msf{K}'/y}^\wedge)^\diamond \cong (\scr{S}_{\msf{K}'/y'}^\wedge)^\diamond$ commutes with $(S_{\msf{K}'}^\wedge)^\diamond \to (\scr{S}_{\msf{K}'}^\wedge)^\diamond$. In particular, $y'$ is the image of $y$ and we have $S_{\msf{K}'/y}^\wedge \cong \scr{S}_{\msf{K}'/y'}^\wedge$ by \cite[Proposition 18.4.1]{SW20}. It follows that $S_{\msf{K}'}(\ov{k}) \to \scr{S}_{\msf{K}'}(\ov{k})$ is injective since if $y_0, y_1\in S_{\msf{K}'}(\ov{k})$ maps to $y' \in \scr{S}_{\msf{K}'}(\ov{k})$, both of $(S_{\msf{K}'/y_i}^\wedge)_\eta$ ($i=0,1$) isomorphically map to $(\scr{S}_{\msf{K}'/y'}^\wedge)_\eta$, so the injectivity of $(S_\msf{K'}^\wedge)_\eta \to (\scr{S}_{\msf{K}'}^\wedge)_\eta$ implies $y_0=y_1$. 
    
    Let $V\to \scr{S}_{\msf{K}'}$ be an \'{e}tale cover by an $O_E$-scheme $V$ and let $S_{\msf{K}',V} = S_{\msf{K}'}\times_{\scr{S}_{\msf{K}'}} V$. Since $S_{\msf{K}',V}(\ov{k}) \to V(\ov{k})$ is injective and $(S_{\msf{K}',V})_{E} \to V_{E}$ is an isomorphism, $S_{\msf{K}',V}\to V$ is quasi-finite by \cite[Corollaire 13.1.4]{EGA4-4}. Since $S_{\msf{K}'}$ and $\scr{S}_{\msf{K'}}$ are separated over $O_E$, we may apply Zariski's main theorem to $S_{\msf{K}',V} \to V$. Since it is birational and $V$ is normal, $S_{\msf{K}',V} \to V$ is an open immersion. 

    Suppose that there is a closed point $y_0\in V(\ov{k})$ outside $S_{\msf{K}',V}$. Since $V$ is flat over $O_E$, $y_0$ can be lifted to a point $y\in V(R)$ with $R$ a complete discrete valuation ring of mixed characteristic over $O_E$. Since $R$ is strictly henselian and prime-to-$p$ Hecke correspondences are finite \'{e}tale, $y$ can be lifted to $y^p \in \lim_{\msf{K}^p} \scr{S}_{\msf{K}'}(R)$. By the extension property of $S_{\msf{K}'}$, $\{y^p_{R[1/p]}\}$ can be lifted to an $R$-valued point of $\lim_{\msf{K}^p} S_{\msf{K}'}$. In particular, $y_{R[1/p]}$ can be lifted to $S_{\msf{K}'}(R)$. Since $\scr{S}_{\msf{K}'}$ is separated, this contradicts the assumption on $y_0$. Thus, we see that $S_{\msf{K}',V} \cong V$ and $S_{\msf{K}'}\to \scr{S}_{\msf{K}'}$ is an isomorphism. 
\end{proof}

In particular, \Cref{prop:equalcan} implies that the functoriality of canonical integral models automatically satisfies \cite[Axiom 3.1]{HR17} under hyperspecial levels. In fact, this can be checked for every $\msf{K}'_p \subset \msf{K}_p$ directly by following the proof of \cite[Proposition 7.1.9]{Pap23}. 

\begin{prop} \label{prop:transpropersurj}
    Let $\{\scr{S}_{\msf{K}}\}_{\msf{K}^p}$ (resp.\ $\{\scr{S}_{\msf{K'}}\}_{\msf{K}^p}$) be a system of canonical integral models of $\{\Sh_{\msf{K}}(\mbf{G},\mbf{X})_E\}_{\msf{K}^p}$ (resp.\ $\{\Sh_{\msf{K'}}(\mbf{G},\mbf{X})_E\}_{\msf{K}^p}$). The morphism $\scr{S}_{\msf{K}'} \to \scr{S}_{\msf{K}}$ constructed in \cite[Corollary 4.3.2]{PR24} is proper and surjective. 
\end{prop}
\begin{proof}
    Let $\scr{S}_{\msf{K}'} \xrightarrow{j} Y \xrightarrow{\pi} \scr{S}_{\msf{K}}$ be a Nagata compactification of $\scr{S}_{\msf{K}'} \to \scr{S}_{\msf{K}}$ (see \cite[Theorem 4.1]{Con07}). By replacing $Y$ with the Zariski closure of $j(\scr{S}_{\msf{K}'})$, we may assume that $j$ has a dense image. Since $\Sh_{\msf{K}'}(\mbf{G},\mbf{X}) \to \Sh_{\msf{K}}(\mbf{G},\mbf{X})$ is finite \'{e}tale, we have $Y_E \cong \Sh_{\msf{K}'}(\mbf{G},\mbf{X})_E$ and $Y-j(\scr{S}_{\msf{K}'})$ lies in the special fiber of $Y$. Suppose that we have a closed point $y_0 \in Y(\ov{k})$ outside $j(\scr{S}_{\msf{K}'})$. Since $Y$ is flat over $O_E$, $y_0$ can be lifted to a point $y\in Y(R)$ with $R$ a complete discrete valuation ring of mixed characteristic over $O_E$. Let $x \in \scr{S}_{K}(R)$ be the image of $y$. Since $R$ is strictly henselian, $x$ can be lifted to some $x^p \in \lim_{\msf{K}^p} \scr{S}_{\msf{K}}(R)$. Then, $x^p_{R[1/p]} \times y_{R[1/p]}$ defines an element of $\lim_{\msf{K}^p} \Sh_{\msf{K}'}(\mbf{G},\mbf{X})(R[1/p])$, so we see that $y_{R[1/p]}$ extends to $\scr{S}_{\msf{K}'}(R)$. Since $Y$ is separated, $y$ lies in $j(\scr{S}_{\msf{K}'})$, but this is a contradiction. Thus, $j$ is an isomorphism and $\scr{S}_{\msf{K}'}\to \scr{S}_{\msf{K}}$ is proper. Since $\scr{S}_{\msf{K}}$ is flat over $O_E$ and $\Sh_{\msf{K}'}(\mbf{G},\mbf{X}) \to \Sh_{\msf{K}}(\mbf{G},\mbf{X})$ is surjective, $\scr{S}_{\msf{K}'}\to \scr{S}_{\msf{K}}$ is a proper surjection. 
\end{proof}

Note that \Cref{prop:transpropersurj} does not rely on the axiom (SV5). However, the proof of \Cref{thm:canint} relies on (SV5) in that $\Sh_{\msf{K}'}(\mbf{G},\mbf{X}) \to \Sh_{\msf{K}}(\mbf{G},\mbf{X})$ is a finite \'{e}tale covering locally isomorphic to the finite set $\msf{K}_p/\msf{K}'_p$. Without (SV5), there is a contribution of the center of $\mbf{G}$ and $\Sh_{\msf{K}'}(\mbf{G},\mbf{X})_E^\ad \to \Sh_{\msf{K}}(\mbf{G},\mbf{X})_E^\ad$ cannot be directly interpreted as a moduli space of level structures. 


\renewcommand\bibfont{\footnotesize}
\printbibliography

\end{document}